\newtheorem{thm}{Theorem}[section]
\newtheorem{lemma}[thm]{Lemma}
\newtheorem{cor}[thm]{Corollary}
\newtheorem{prop}[thm]{Proposition}
\newtheorem{remark}[thm]{Remark}
\newcommand{\varep}{\varepsilon}
\def\XXint#1#2#3{{\setbox0=\hbox{$#1{#2#3}{\int}$}
         \vcenter{\hbox{$#2#3$}}\kern-.5\wd0}}
\def\R{\mathbb{R}}
\def\e{\varepsilon}
\numberwithin{equation}{section}
\begin{document}

\title{On the Schr\"odinger equations with $B_\infty$ potentials in the region above a Lipschitz graph}

\author{Jun Geng\thanks{This work was supported in part by the NNSF of China (No. 12371096).}\footnote{
School of Mathematics and Statistics, Lanzhou University, Lanzhou, P.R. China. \newline
\indent\quad  E-mail: gengjun@lzu.edu.cn}
\and Ziyi Xu
\footnote{School of Mathematics and Statistics,
Lanzhou University,
Lanzhou, P.R. China.\newline
\indent \quad E-mail: 120220907780@lzu.edu.cn}}

\date{}
\maketitle

\begin{abstract}
  In this paper we investigate the $L^p$ regularity, $L^p$ Neumann and $W^{1,p}$ problems for generalized Schr\"odinger operator $-\text{div}(A\nabla )+ V $ in the region above a Lipschitz graph under the assumption that $A$ is elliptic, symmetric and  $x_d-$independent. Specifically, we prove that the $L^p$ regularity problem is uniquely solvable for $$1<p<2+\e.$$
  Moreover, we also establish the $W^{1,p}$ estimate for Neumann problem for
  $$\frac{3}{2}-\e<p<3+\e.$$
  As a by-product, we also obtain that the $L^p$ Neumann problem is uniquely solvable for $1<p<2+\e.$
  The only previously known estimates of this type pertain to the classical Schr\"odinger equation $-\Delta u+ Vu=0$ in $\Omega$ and $\frac{\partial u}{\partial n}=g$ on $\partial\Omega$ which was obtained by Shen \protect\cite{Shen-1994-IUMJ} for  ranges $1<p\leq 2$.
  All the ranges of $p$ are sharp.
\end{abstract}

{\bf Keywords} \quad Schr\"odinger operator, Lipschitz graph, $W^{1,p}$ estimate, $L^p$ estimate

\bigskip
{\bf  Mathematics Subject Classification (2000)} \quad 35J10; 35J25; 35B45

\bigskip

\tableofcontents


\section{\bf Introduction}\label{section-1}
The regularity of solutions to the Schr\"odinger equations in the presence
of a positive potential is an important property underpinning the foundation of
quantum physics.
In this paper we will establish well-posedness of the $L^p$ regularity, $L^p$ Neumann and $W^{1,p}$ problems for generalized Schr\"odinger operator
  $$\mathcal{L}+V=-\text{div}(A\nabla )+ V$$
in the region above a Lipschitz graph.  Precisely, let $$\Omega=\{(x',t): x'\in \mathbb{R}^{d-1}, t\in\mathbb{R} \text{ and }t>\Psi(x')\}\subset\mathbb{R}^{d},\quad d\geq3$$
where $\Psi:\mathbb{R}^{d-1}\to\mathbb{R}$ is a Lipschitz function with $\|\nabla\Psi\|_{L^\infty(\mathbb{R}^{d-1})}\leq M$.

Following the tradition and
physical significance, we will refer to $V$ as the
electric potential. We assume that the coefficient matrix $A(x)=(a_{ij}(x))$ is real, and satisfies the ellipticity condition,
\begin{equation}\label{ellipticity}
\mu|\xi|^2\leq a_{ij}\xi_i \xi_j,\quad \|A\|_{L^\infty}\leq \mu^{-1}\quad {\rm for~~ any}\quad
\xi=(\xi_i)\in\mathbb{R}^{ d} ,
\end{equation}
where $\mu>0$, the symmetric condition, i.e.,
\begin{equation}\label{symmetric}
A=A^*,
\end{equation}
and the structure condition, i.e., $A$ is $x_d$-independent,
\begin{equation}\label{independent}
  A(x',s)=A(x',t)=A(x^\prime)\quad\text{for all }x'\in\mathbb{R}^{d-1}\text{ and }s,t\in\mathbb{R}.
\end{equation}
Moreover, we impose smoothness condition on the coefficient matrix, i.e., $A$ is partially Dini continuous with respect to $x'$. Indeed, assume that
\begin{equation}\label{Dini}
\int_0^t\frac{\eta(\rho)}{\rho}\,d\rho<\infty, \quad\text{for any }t>0.
\end{equation}
where
$$\eta(\rho)=\sup_{x_0\in\R^d, r\leq\rho} \left(|B(x_0,r)|^{-2}\int_{B(x_0,r)\cap\Omega} \int_{B(x_0,r)\cap\Omega} |A(x',x_d)-A(y',x_d)|^2\,dxdy\right)^{\frac12}$$
is the modulus of continuity.

Throughout this paper, we assume that $0<V\in B_\infty$, i.e., $V\in L_{loc}^\infty(\mathbb{R}^d)$, and there exists a constant $C>0$ such that, for all ball $B\subset \mathbb{R}^d$
\begin{equation}\label{V}
    \|V\|_{L^\infty(B)}\leq  C\fint_B  V.
\end{equation}
Examples of $B_\infty$ weights are $|x|^a$ with $0\leq a<\infty$.

In this paper we shall be interested in sharp regularity estimates of $u$, assuming that the data is in $L^p$. To state main results of the paper, denote $n$ the outward unit normal to $\partial\Omega$ and $\frac{\partial u}{\partial \nu}=n\cdot A\nabla u$ conormal derivative of $u$ associated with $\mathcal{L}$.

\begin{thm}\label{main-thm-W1p}($W^{1,p}$ estimate)
Let $\Omega$ be the region above a Lipschitz graph. Suppose $A$ satisfies \protect\eqref{ellipticity}-\protect\eqref{Dini} and $V(x)>0$ a.e. satisfies \protect\eqref{V}. Let $$\frac{3}{2}-\e<p<3+\e$$
and $u\in W^{1,p}(\Omega)$ be the weak solution to
\begin{equation}\label{W1p}
\mathcal{L} u+V u ={\rm div} f\quad  \text{in } \Omega,\quad\text{ and }\quad\frac{\partial u}{\partial \nu}  =-f\cdot n+G \quad\text{on } \partial\Omega,
\end{equation}
where $f\in L^{p}(\Omega,\mathbb{R}^d)$ and $G\in B^{-\frac 1p,p}(\partial\Omega)$. Then
$$\|\nabla u\|_{L^p(\Omega)}+\|V^\frac{1}{2}u\|_{L^p(\Omega)}\leq C\left\{\|f\|_{L^p(\Omega)}+\|G\|_{B^{-\frac 1p,p}(\partial\Omega)}\right\}$$
where $C$ depends only on $\mu$, $d$, $p$ and the Lipschitz character of $\Omega$.
\end{thm}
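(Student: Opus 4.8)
The plan is to prove the estimate for $2\le p<3+\e$ directly by a real-variable argument and to recover $3/2-\e<p\le2$ by duality, using that $\mathcal L+V$ is self-adjoint (by \eqref{symmetric}, $V$ real) so that the set of admissible exponents is symmetric about $2$ under Hölder conjugation; since $(3/2)'=3$ this matches the asserted interval. The starting point is $p=2$, which I would get by pure energy methods: the form $\mathcal B[u,v]=\int_\Omega A\nabla u\cdot\nabla v+\int_\Omega Vuv$ is bounded and, by \eqref{ellipticity} and $V>0$, coercive on the homogeneous space $\{u:\nabla u\in L^2(\Omega),\ V^{1/2}u\in L^2(\Omega)\}$ with $\mathcal B[u,u]\ge\mu\|\nabla u\|_{L^2(\Omega)}^2+\|V^{1/2}u\|_{L^2(\Omega)}^2$; pairing \eqref{W1p} with $u$ and using the trace inequality $\|u\|_{B^{1/2,2}(\partial\Omega)}\le C\|\nabla u\|_{L^2(\Omega)}$ gives the $p=2$ bound (and, via Lax--Milgram, existence). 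Uniqueness throughout the range is then a formal consequence of the a priori estimate, since $f\equiv G\equiv0$ forces $\nabla u\equiv0$ and $V^{1/2}u\equiv0$, hence $u\equiv0$ because $V>0$ a.e.

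\textbf{Real-variable reduction.} After representing $G$ as minus the conormal trace of an $L^p(\Omega)$ vector field with comparable norm and absorbing it into $f$ (a routine step), I would reduce matters to proving $\|\nabla u\|_{L^p(\Omega)}+\|V^{1/2}u\|_{L^p(\Omega)}\le C\|f\|_{L^p(\Omega)}$ for $\mathcal Lu+Vu=\mathrm{div}\,f$ with $\partial u/\partial\nu=-f\cdot n$, and apply Shen's real-variable extrapolation to $F=|\nabla u|+|V^{1/2}u|$, whose $L^2$ norm is already controlled. At a ball $B=B(x_0,r)$, $x_0\in\overline\Omega$, split $u=v_B+w_B$ on $\alpha B\cap\Omega$ with $w_B$ solving the same equation but with zero data on $\partial(\alpha B\cap\Omega)$ away from $\partial\Omega\cap\alpha B$; the energy estimate controls $\big(\fint_{\alpha B\cap\Omega}(|\nabla w_B|+|V^{1/2}w_B|)^2\big)^{1/2}$ by $\big(\fint_{\alpha B\cap\Omega}|f|^2\big)^{1/2}$, while $v_B$ solves the homogeneous equation with vanishing conormal derivative on $\partial\Omega\cap\alpha B$. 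Thus everything reduces to the scale-invariant reverse Hölder inequality
\[\Big(\fint_{B\cap\Omega}(|\nabla v|+|V^{1/2}v|)^{q}\Big)^{1/q}\le C\Big(\fint_{2B\cap\Omega}(|\nabla v|+|V^{1/2}v|)^{2}\Big)^{1/2}\]
for solutions $v$ of the homogeneous problem, for a fixed $q$ with $2<p<q<3+\e$; that the constant is scale-uniform is because the class of coefficients, $B_\infty$ potentials and Lipschitz graph domains is preserved, with fixed constants, under dilations.

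\textbf{Interior balls.} On interior balls ($2B\subset\Omega$) I expect the inequality with $q=\infty$. The Dini condition \eqref{Dini} lets one freeze the $x'$-dependent coefficients and run a Campanato iteration with a \emph{summable} error from the oscillation of $A$; since $V\in B_\infty\subset L^\infty_{\loc}$, $Vv$ is a bounded right-hand side at the relevant scales, giving $\|\nabla v\|_{L^\infty(B)}\le C\big(\fint_{2B}(|\nabla v|^2+Vv^2)\big)^{1/2}$. For $V^{1/2}v$, I would use that $|v|$ is a subsolution of $-\mathrm{div}(A\nabla\cdot)$, hence locally bounded by its $L^2$ average, combined with \eqref{V} ($\|V\|_{L^\infty(B)}\le C\fint_{2B}V$) and a Fefferman--Phong estimate bounding $\fint_{B}v^2$ by $\rho(x_0)^2\fint_{2B}(|\nabla v|^2+Vv^2)$ in terms of the auxiliary radius $\rho(x_0)$ of $V$, together with the exponential decay of the fundamental solution at supercritical scales; these are the standard interior bounds for Schr\"odinger operators with $B_\infty$ potentials.

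\textbf{Boundary balls: the crux.} The decisive estimate is the reverse Hölder inequality at a boundary ball $B(Q,r)$, $Q\in\partial\Omega$, for the homogeneous Neumann problem, with $q$ arbitrarily close to (in fact slightly above) $3$. Again I would freeze the coefficients with summable Campanato errors and remove the potential (a controlled perturbation for $r\lesssim\rho(Q)$, handled by decomposition into subcritical boundary cubes otherwise), reducing to the model case of $-\mathrm{div}(A^0\nabla\cdot)$ with zero conormal data on a boundary portion of a Lipschitz graph domain. A linear change of variables turns this into the Laplace Neumann problem on another Lipschitz graph domain with the same class of Lipschitz constants, for which the $W^{1,q}$ estimate $\|\nabla v\|_{L^q}\lesssim(\fint|\nabla v|^2)^{1/2}$ holds for $1<q<3+\e_0$ (Jerison--Kenig); this is precisely where the structure condition \eqref{independent} enters, through the underlying $L^2$ Neumann and regularity theory on Lipschitz graphs which rests on Rellich identities, and it is what makes the threshold $3$, rather than merely $2+\e$, available, so that the ranges in the theorem are sharp. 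I expect this boundary bookkeeping to be the main obstacle: one must simultaneously accommodate the merely Lipschitz boundary (which caps $q$ near $3$), the merely Dini coefficients (whose freezing must cost none of that exponent), and the potential $V$ with no lower bound (whose removal must survive the subcritical/supercritical split). With the reverse Hölder inequality established for some $q\in(p,3+\e)$, Shen's lemma closes the estimate for $2<p<3+\e$, and the self-adjoint duality gives $3/2-\e<p\le2$.
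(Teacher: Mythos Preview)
Your overall architecture—$p=2$ by energy, $2<p<3+\e$ by Shen's real-variable extrapolation reduced to a boundary reverse H\"older inequality for homogeneous Neumann solutions, then $3/2-\e<p<2$ by self-adjoint duality—is exactly the paper's. The substantive difference is in how the boundary reverse H\"older is obtained, and in two side issues.

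For the reverse H\"older the paper does \emph{not} freeze to constant coefficients and invoke Jerison--Kenig. Instead it first establishes the $L^2$ Neumann problem for the full operator $\mathcal L+V$ on the graph domain via Rellich identities (this is where \eqref{independent} is used for the actual variable $A$, not merely for a frozen constant), giving $\int_{S_r}|(\nabla v)_r^*|^2\,d\sigma\le Cr^{-1}\int_{Z_{2r}}|\nabla v|^2$. Separately it proves a De Giorgi--Nash H\"older estimate for solutions of $(\mathcal L+V)u=0$ (by writing $u$ as an $\mathcal L$-harmonic function plus a Newtonian potential of $Vu$ and using fundamental-solution bounds); combined with the interior Lipschitz estimate coming from \eqref{Dini} this yields the pointwise bound $|\nabla v(x)|\le C\,\mathrm{dist}(x,\partial\Omega)^{\alpha-1}r^{1-\alpha}(\fint_{Z_{2r}}|\nabla v|^2)^{1/2}$. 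Writing $|\nabla v|^q=|\nabla v|^2\cdot|\nabla v|^{q-2}$ and integrating against this pointwise bound and the nontangential estimate gives the reverse H\"older for every $q<3+\frac{\alpha}{1-\alpha}$. Your freezing-to-Laplacian route is plausible, but the step you yourself flag as the obstacle really is one: a Campanato iteration at a merely Lipschitz boundary does not directly deliver $W^{1,q}$ for $q$ near $3$, the conormal derivative changes when you freeze $A$, and the removal of $V$ at supercritical scales needs the same decay input the paper develops; in effect you would be running a nested perturbation/real-variable argument whose details are not supplied. The paper's route trades that bookkeeping for building the $L^2$ layer-potential theory for $\mathcal L+V$ itself.

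Two further differences: the paper handles $G$ by duality against the $f$-only estimate (pair $\nabla u_2$ with a test field $h$, transfer to an auxiliary $v$ with $\partial v/\partial\nu=0$, and use $\|\nabla v\|_{L^{p'}}\le C\|h\|_{L^{p'}}$), rather than absorbing $G$ into $f$; and it does \emph{not} fold $V^{1/2}u$ into the reverse-H\"older function $F$. Instead $\|V^{1/2}u\|_{L^p}$ is obtained afterward from pointwise Neumann-function decay $|N(x,y)|\le C_k(1+|x-y|m(y,V))^{-k}|x-y|^{2-d}$ together with $V\le Cm(\cdot,V)^2$ and a Schur-type argument. Your choice to include $V^{1/2}u$ in $F$ would require a boundary reverse H\"older for $V^{1/2}v$ as well, which is extra work you have not sketched.
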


\begin{remark}
Let $\Omega,A$ and $V$ be the same as in Theorem \protect\ref{main-thm-W1p}. It is not hard to see the $W^{1,p}$ estimate also holds for Schr\"odinger equation subjected to Dirichlet boundary condition. Precisely, let $$\frac{3}{2}-\e<p<3+\e$$
and $u\in W^{1,p}(\Omega)$ be the weak solution to
\begin{equation}
\mathcal{L} u+V u ={\rm div} f\quad  \text{in } \Omega,\quad\text{ and }\quad u  =G \quad\text{on } \partial\Omega,
\end{equation}
where $f\in L^{p}(\Omega,\mathbb{R}^d)$ and $G\in B^{1-\frac 1p,p}(\partial\Omega)$. Then
$$\|\nabla u\|_{L^p(\Omega)}\leq C\left\{\|f\|_{L^p(\Omega)}+\|G\|_{B^{1-\frac 1p,p}(\partial\Omega)}\right\}$$
where $C$ depends only on $\mu$, $d$, $p$ and the Lipschitz character of $\Omega$. The proof follows the similar line of argument as in Theorem \protect\ref{main-thm-W1p}, and we omit the details.
\end{remark}

In next theorem, the nontangential maximal function of $u$ at $z$ is denoted as
$$(u)^*(z)=\sup\{|u(x)|: x\in \Omega\text{ and } |x-z|<C\text{dist}(x,\partial\Omega)\},\quad z\in\partial\Omega.$$
Moreover, the maximal function $m(x, V)$ denotes the Fefferman-Phong-Shen maximal function and we refer to Section 2 for the exact definition.

\begin{thm}\label{main-thm-Lp-r}(Nontangential maximal function estimate)
Let $\Omega$ be the region above a Lipschitz graph. Suppose $A$ satisfies \protect\eqref{ellipticity}-\protect\eqref{Dini} and $V(x)>0$ a.e. satisfies \protect\eqref{V}. Let $$1< p<2+\e.$$ Then for every $g$ satisfying
$$\int_{\partial\Omega}|\nabla_{tan}g|^p d\sigma +\int_{\partial\Omega}|gm(x,V)|^p d\sigma<\infty,$$
there exists a unique solution $u$ so that
\begin{equation}\label{RP}
\mathcal{L} u+V u =0 \quad  \text{in } \Omega,\quad
u  =g\quad\text{on } \partial\Omega\quad\text{and}\quad
(\nabla u)^*\in L^p(\partial\Omega).
\end{equation}
 Moreover, the solution $u$ satisfies
\begin{equation}\label{rp-estimate}
\|(\nabla u)^*\|_{L^p(\partial\Omega)}\leq C\|\nabla_{tan}g\|_{L^p(\partial\Omega)} +C\|gm(x,V)\|_{L^p(\partial\Omega)},
\end{equation}
where $C$ depends only on $\mu$, $d$, $p$ and the Lipschitz character of $\Omega$.
\end{thm}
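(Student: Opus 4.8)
**Proof proposal for Theorem \ref{main-thm-Lp-r} (nontangential maximal function estimate for the $L^p$ regularity problem).**

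The plan is to establish the estimate \eqref{rp-estimate} first, and then obtain existence by the usual approximation/limiting argument once the a priori bound is in hand; uniqueness will follow from the maximum principle together with the $L^2$ theory. For the a priori estimate, I would begin from the $L^2$ case, $p=2$, which should be available from the layer-potential or Rellich-identity machinery adapted to the operator $\mathcal{L}+V$ with $x_d$-independent, symmetric, Dini-continuous $A$ and $V\in B_\infty$ (this is the base case analogous to Shen's $-\Delta+V$ result); the key input there is a Rellich-type identity exploiting the $x_d$-independence of $A$, which controls $\|(\nabla u)^*\|_{L^2}$ by the tangential-gradient data and the $m(\cdot,V)$-weighted term coming from the potential. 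Granting $p=2$, the strategy for $1<p<2$ is a good-$\lambda$ / real-variable argument: one shows that on each surface ball $\Delta=\Delta(Q,r)$ the solution decomposes as $u=v+w$ where $v$ solves the equation with the \emph{same} Dirichlet data localized, handled by the $L^2$ bound and Cacciopoli-type interior estimates, and $w$ is a correction whose nontangential maximal function has better-than-$L^2$ decay. The right framework here is the Shen $L^p$-extrapolation lemma (the ``reverse Hölder for $(\nabla u)^*$'' principle): it suffices to prove a weak reverse Hölder inequality
$$\left(\fint_{\Delta(Q,r)} |(\nabla u)^*|^{q}\,d\sigma\right)^{1/q}\le C\left(\fint_{\Delta(Q,2r)}|(\nabla u)^*|^{2}\,d\sigma\right)^{1/2}+\big(\text{data terms on }\Delta(Q,2r)\big)$$
for some $q>2$ and all solutions $u$ that vanish (have zero tangential data and $gm(\cdot,V)$ small) on $\Delta(Q,cr)$; by the extrapolation principle this self-improves to the full range $2\le p<2+\e$, while the range $1<p<2$ comes from the dual endpoint via the $W^{1,p}$ estimate of Theorem \ref{main-thm-W1p} and a duality pairing between the regularity and Neumann problems (the standard Verchota-type duality between $R_p$ and $N_{p'}$), which is exactly why the stated threshold is $2+\e$ with the same $\e$ as in the $W^{1,p}$ theorem.

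Concretely I would carry out the following steps in order. Step 1: record the $L^2$ regularity estimate for $\mathcal{L}+V$ above a Lipschitz graph, including the precise appearance of $\|gm(\cdot,V)\|_{L^2}$ on the right — this requires combining the known $L^2$ theory for $\mathcal{L}=-\mathrm{div}(A\nabla)$ (Kenig–Shen type results for $x_d$-independent symmetric $A$) with a perturbation absorbing $Vu$, using that $V\in B_\infty$ gives the Fefferman–Phong inequality $\int |u|^2 V\lesssim \int|\nabla u|^2 + \text{boundary}$ and that $m(\cdot,V)$ governs the subunit metric. Step 2: prove local boundedness and Cacciopoli/De Giorgi–Nash–Moser estimates for solutions of $\mathcal{L}u+Vu=0$ in $\Omega$ up to the boundary where data vanishes — here the Dini condition \eqref{Dini} is exactly what upgrades interior $C^{0,\alpha}$ to the Lipschitz/gradient bound $|\nabla u|\le C\,\mathrm{dist}(x,\partial\Omega)^{-1}(\fint |u|^2)^{1/2}$ needed to pass from $u$ to $(\nabla u)^*$. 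Step 3: set up the two-component decomposition on a surface ball and verify the weak reverse Hölder inequality for $(\nabla u)^*$; apply Shen's extrapolation lemma to get $p\in[2,2+\e)$. Step 4: handle $1<p<2$ by duality with the Neumann $W^{1,p'}$ estimate from Theorem \ref{main-thm-W1p} (note $p'\in(3/2-\e',3+\e']$ when $p\in(1,2+\e)$, covering the needed dual exponents). Step 5: existence by approximating $g$ by nice data, solving via the $L^2$ theory (or layer potentials), and passing to the limit using the uniform estimate \eqref{rp-estimate}; uniqueness from the energy estimate plus the maximum principle for $\mathcal{L}+V$.

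The main obstacle I anticipate is Step 1 together with the bookkeeping of the $m(\cdot,V)$-weighted term: making the $L^2$ regularity estimate \emph{precise} with the correct potential-dependent boundary term, uniformly on the unbounded Lipschitz domain, is delicate because one must simultaneously control the potential contribution $\int_\Omega V|u|^2$ and the boundary integral $\int_{\partial\Omega}|gm(\cdot,V)|^2$, and the natural Rellich identity for $\mathcal{L}+V$ generates a bulk term $\int_\Omega V\,|u|^2\,(\partial_d \text{something})$ that has no obvious sign — one typically resolves this using the auxiliary function-theoretic properties of $m(\cdot,V)$ (e.g. $m(\cdot,V)\sim m(y,V)$ on balls of radius $m(y,V)^{-1}$, and the ``Shen maximal function'' Poincaré inequality) to trade it against the good terms. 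A secondary difficulty is that $A$ is only Dini continuous, not Hölder, so all gradient estimates must be pushed through with the Dini modulus; this is where \eqref{Dini} enters crucially and where one must be careful that the constants in the reverse Hölder inequality of Step 3 do not degenerate. Once these two technical points are settled, the extrapolation and duality steps are, I expect, essentially routine given Theorem \ref{main-thm-W1p}.
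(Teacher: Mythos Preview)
Your outline for $p=2$ (Rellich identity exploiting the $x_d$-independence and symmetry of $A$, then layer potentials) and for $2<p<2+\e$ (decompose $u=v+w$ on surface balls, verify a weak reverse H\"older inequality for $(\nabla w)^*$, apply Shen's real-variable extrapolation) matches the paper's argument essentially step for step. Your identification of the $m(\cdot,V)$-weighted boundary term and the role of the Fefferman--Phong inequality in the $L^2$ Rellich step is also on target.

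The gap is Step 4. You propose to obtain $1<p<2$ by duality with the $W^{1,p'}$ estimate of Theorem \ref{main-thm-W1p}, but that theorem only covers $\frac32-\e<p'<3+\e$. For $p$ close to $1$ one has $p'\to\infty$, far outside this range; your parenthetical claim that ``$p'\in(3/2-\e',3+\e']$ when $p\in(1,2+\e)$'' is simply false. Even restricting to those $p$ for which $p'<3+\e$, the Verchota-type duality you invoke relates the boundary regularity problem $R_p$ to the boundary Neumann problem $N_{p'}$ with nontangential control, not to the bulk estimate $\|\nabla v\|_{L^{p'}(\Omega)}$ of Theorem \ref{main-thm-W1p}; one does not directly recover $\|(\nabla u)^*\|_{L^p(\partial\Omega)}$ from a $W^{1,p'}(\Omega)$ bound. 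So this step, as written, does not go through.

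What the paper actually does for $1<p<2$ is an atomic Hardy-space endpoint argument: for boundary data equal to a single $\mathcal{H}^1_{1,\mathrm{at}}$-atom $a$ supported in $\Delta(y_0,r)$, one shows directly that $\int_{\partial\Omega}(\nabla u)^*\,d\sigma\le C$. The near part $\Delta(y_0,10r)$ is handled by Cauchy--Schwarz and the $L^2$ estimate; the far part uses the Poisson representation via the Green function, the decay $|G(x,y)|\le C|x-y|^{2-d}$, and the boundary $L^2$ theory on annular subdomains $\Omega\setminus B(y_0,R)$ (integrating in the radius to pass from surface to solid integrals), yielding geometric decay $\int_{2^jr\le|\cdot-y_0|<2^{j+1}r}(\nabla u)^*\,d\sigma\le C2^{-j\beta}$. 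Summing in $j$ gives the $\mathcal{H}^1$ bound, and interpolation between $(\mathcal{H}^1,L^1)$ and $(L^2,L^2)$ (Coifman--Weiss) then yields all $1<p<2$. This is the missing ingredient in your plan.
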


For Laplace's equation $\Delta u=0$ in bounded Lipschitz domains, the $L^p$ Dirichlet, Neumann and regularity boundary value problems are well understood. Indeed, it has been known since the early 1980s that the Dirichlet problem with optimal estimate $(u)^*\in L^p$ is uniquely solvable for $2-\e<p<\infty$ and the Neumann problem as well as the regularity problem with optimal estimate $(\nabla u)^*\in L^p$
is uniquely solvable for any $1<p<2+\e$ where $\varep>0$ depends on
$\Omega$. If $\Omega$ is a $C^1$ domain, $L^p$ Dirichlet and regularity problems
are solvable for any $1<p<\infty$ \protect\cite{FJR}. And the ranges of
$p$ are sharp (see \protect\cite{Dahlberg-Kenig-1987,JK-1981, Verchota-1984};
also see \protect\cite{GS-JFA-2010, Kenig-book,Kim-Shen-JFA-2008} for references on related work on boundary value problems in bounded Lipschitz domains or bounded convex domains).

However, for a general second order elliptic operator
$\mathcal{L}$,
$L^p$ Dirichlet problem (thus $L^{p^\prime}$ regularity problem)
may not be solvable for any $1<p<\infty$,
even if the coefficients of $\mathcal{L}$ are continuous and
$\Omega$ is smooth.
Furthermore, it is known that the $L^{p}$ Dirichlet problem
 for $\mathcal{L}$ on $\Omega$
is solvable if and only if
the $\mathcal{L}$-harmonic measure
is a $B_p$ weight with respect to the surface measure
on $\partial\Omega$. We
refer the reader to \protect\cite{Kenig-book}
for references on these and other deep results on the solvability of
the $L^p$ Dirichlet problem. Concerning
the $L^p$ regularity problem for
general second order elliptic operators, we mention that the study
was initiated by Kenig and Pipher \protect\cite{KP-1993, KP-1995}.
\begin{remark}
It worths pointing that, regarding the
conditions on the $x_d$ variable, the global estimate $\|(u)^*\|_{L^p(\partial\Omega)}\leq C\|f\|_{L^p(\partial\Omega)}$ or $\|(\nabla u)^*\|_{L^p(\partial\Omega)}\leq C\|f\|_{L^p(\partial\Omega)}$ may fail for any $p$, even if $A(x^\prime, x_d)$ satisfies \protect\eqref{ellipticity} and \protect\eqref{symmetric} and is $C^\infty$
in $\mathbb{R}^{d+1}$.
\end{remark}
If the coefficient matrix
$A$ is $x_d$-independent, the $L^p$ solvability of the Neumann and regularity problems in the case of radially independent coefficients in the unit ball was essentially established in \protect\cite{KP-1993} for the sharp range $1< p< 2+\e$.
 Additionally, \protect\cite{KKPT-2000, KP-1995, KR-2008} contains related research on $L^p$ boundary value problems for operators with $x_d$-independent, yet non-symmetric coefficients. We also remark here in the scalar case, under the assumption that $A$ is periodic and Dini continuous in $x_d$ variable, Kenig and Shen \protect\cite{KS-2011-1} obtain the sharp $L^p$ Dirichlet, Neumann and regularity estimates. In the case of elliptic systems, $L^2$ Dirichlet, Neumann and regularity estimates were obtained in \protect\cite{KS-2011} by assuming that $A$ is periodic and H\"older continuous.

The $L^p$ solvability with
$1<p\leq 2$ for Schr\"odinger equations $-\Delta u+Vu=0$ in $\Omega$ subjected to Neumann boundary $\frac{\partial u}{\partial n}=f$ on $\partial\Omega$ in Lipschitz graph domains
was first obtained by Shen \protect\cite{Shen-1994-IUMJ} (related work about the Schr\"odinger operator $-\Delta +V$ may be found in \protect\cite{Auscher-2007,MT-1999-JFA,Shen-2024,Shen-1995,Shen-1999}). We extend Shen's result to the generalized Schr\"odinger operator $\mathcal{L}+ V $ and show that the $L^p$ regularity and Neumann problems are uniquely solvable for sharp ranges $1<p< 2+\e$. We also establish that the $W^{1,p}$ estimate holds for sharp ranges
$\frac{3}{2}-\e<p<3+\e.$

Our proof to Theorem \protect\ref{main-thm-Lp-r} was motivated by the
work in \protect\cite{Shen-1994-IUMJ}. We first seek to establish the global $L^2$ Rellich-Nec$\breve{a}$s-Payne-Weinberger
type estimate
$$
\int_{\partial\Omega}|\nabla u|^2d\sigma\sim C \int_{\partial\Omega}|\nabla_{tan} u|^2d\sigma+C\int_{\partial\Omega}|um(x,V)|^2d\sigma.
$$
Next, let $f\in L^p(\partial\Omega)$ and $\mathcal{S}(f)$ denote the single layer potential with density $f$, defined by
\begin{equation}\label{S}
\mathcal{S} (f)(x)=\int_{\partial\Omega}\Gamma (x,y) f(y)d\sigma(y)
\end{equation}
where $\Gamma(x,y)$ denotes the fundamental solution for operator $\mathcal{L}+V$ on $\mathbb{R}^d$. Let $u(x)=\mathcal{S}(f)(x)$, then $\mathcal{L} u +V u=0$ in $\mathbb{R}^d\backslash\partial\Omega$. With the help of the method of layer potential, we were able to show that
$\mathcal{S}: L^2(\partial\Omega)\to \widetilde{W}^{1,2}(\partial\Omega)$  is invertible where $\widetilde{W}^{1,2}(\partial\Omega)=\{f: \|\nabla_{tan}f\|_{L^2(\partial\Omega)}+ \|fm(x,V)\|_{L^2(\partial\Omega)}<\infty\}$. We also remark here that the boundary $L^\infty$ estimate
 $$
\sup_{x \in D(x,r)}|u(x)| \leq \frac{C_{k}}{\left(1+r m\left(x_{0}, V\right)\right)^{k}}\left( \fint_{D(x,2r)}|u(x)|^{2} d x\right)^{1 / 2}\quad\text{for } k\geq1,
$$
obtained by Moser's iteration method, is needed in the estimates for the Neumann function and the fundamental solution.  This leads to the $L^2$ estimate. Our proof of the $L^p(p\neq 2)$ estimates
with the range $1<p<2+\e$ is completed by two different methods. The result for $2<p<2+\e$ is done by a real variable argument, using the $L^2$ solvability and the dual Calder\'on-Zygmund decomposition as well as a variant of the `Good-$\lambda$' inequality. The extension to $1<p<2$ is accomplished by showing that the solution $u$, of the regularity problem with data $g\in\mathcal{H}_{1,at}^1(\partial\Omega)$, satisfies
$$
\int_{\partial\Omega}|(\nabla u)^*|\,d\sigma\leq C\|g\|_{\mathcal{H}_{1,at}^1(\partial\Omega)}
$$
where $\mathcal{H}_{1,at}^1(\partial\Omega)$ is the atomic Hardy space on $\partial\Omega$. This is finished by establishing estimate of
the non-tangential maximal function of $\nabla u$ with $L^2$ atoms data. Then the full result
follows by interpolation theory.

The main ingredients in our approach to Theorem \ref{main-thm-W1p} are interior estimate of $\mathcal{L} u +V u=0$ and the $L^2$ non-tangential maximal function estimate as well as the solvability of the regularity problem with data in $W^{1,q}(\partial\Omega)$ for some $q>2$. Let $v$ be weak solutions to $\mathcal{L}v+Vv=0$ with Neumann boundary conditions $\frac{\partial v}{\partial \nu}  =0$. Following the method in \cite{Geng-2012}(see also \cite{Shen-2007}), by using the real variable argument which was also used in the proof of $L^p$ regularity estimate for the case $2<p<2+\e$, we reduce the $W^{1,p}$ estimates of \eqref{W1p} to the weak reverse H\"older inequality
\begin{equation}
    \label{rh inequality1}
    \bigg(\fint_{D(x_{0},r)}|\nabla v|^p\,dx\bigg)^{\frac{1}{p}}\leq C\bigg(\fint_{D(x_{0},2r)}|\nabla v|^2dx\bigg)^{\frac{1}{2}}
\end{equation}
holds for $p>2$. To see \eqref{rh inequality1}, we re-write
$|\nabla v|^p  =|\nabla v|^2\cdot|\nabla v|^{p-2}$
and estimate $|\nabla v|^2$ by its $L^2$ non-tangential maximal function estimate. To handle the term $\int_{D(x_{0},2r)}|\nabla v|^{p-2}\,dx$, the key step is to establish the De Giorgi-Nash type estimate,
$$
|u(x)-u(y)| \leqslant C\left(\frac{|x-y|}{r}\right)^{\alpha} \sup _{B(x_{0}, r)}|u|
$$
for any $x, y \in B\left(x_{0}, r/ 4\right)$.
This is done in Section $3$, using a perturbation
scheme taken from \cite{Shen-1999}, with the help of size estimate of fundamental solution established in Section \ref{section-3}.
This gives \eqref{rh inequality1} for any $2<p< 3+\frac{\alpha}{1-\alpha}$. And the ranges of $\frac{3}{2}-\e<p<2$ follows by a duality argument.

This article is organized as follows. In Section \ref{section-2}, we collect some known results for the Fefferman-Phong-Shen maximal function. In Section \ref{section-3}, the boundary $L^\infty$ estimate and H\"older estimate are established, and with this, estimates for the fundamental solution and the Neumann function are established. Moreover, in Section \ref{section-4} we give the $L^2$ estimates of the  regularity problem by Rellich estimates. And $L^p$ regularity estimate and $L^p$ Neumann estimate are given in Section \ref{section-6} and Appendix respectively. Section \ref{section-7} presents $W^{1,p}$ estimate for the Neumann problem.

We end this section with some notations. We will use $A\sim B$ means that there exist constants $C>0$ and $c>0$ such that $c\leq A/ B\leq C$.
We use $\fint_E u$ to denote the average of $u$ over the set $E$; i.e.
$$
\fint_E u =\frac{1}{|E|} \int_E u.
$$
Let
\begin{equation}\label{D-D'}
D(x_0,r)=B(x_0,r)\cap\Omega\quad\text{and}\quad
\Delta(x_0,r)=B(x_0,r)\cap\partial\Omega,
\end{equation}
where $B(x_0,r)$ denotes the sphere centered at $x_0$ with radius $r$.



\section{\bf The Fefferman-Phong-Shen maximal function \texorpdfstring{$m(x,V)$}{}}\label{section-2}
In this section, we will quote a number of results from \cite{Shen-1994-IUMJ}.
Other versions of these lemmas and definitions appeared in \cite{Shen-1995}, and are related to the ideas of Fefferman and Phong \cite{Fefferman-1983}. Let $m(x,V)$ denote the Fefferman-Phong maximal function associated to $V$ satisfying \eqref{V}.
We first note that if $V$ satisfies \eqref{V}, then $V$ is a Muckenhoupt $A_\infty$ weight function \cite{Grafakos-book}. Recall that we say $\omega\in A_\infty$ if the $A_\infty$ characteristic constant of $\omega$
\begin{equation}\label{}
	[\omega]_{A_\infty}:=\sup_{B(x,r)\subset\mathbb{R}^d}\left\{\bigg(\fint_{B(x,r)}\omega\bigg) exp\bigg(\fint_{B(x,r)} \ln \omega^{-1} \bigg)\right\}<\infty.
\end{equation}
We list some properties of $A_\infty$ weight, as below.
\begin{lemma}\label{A-1}
	Let $\omega\in A_\infty$, then
	
	(1) the measure $\omega\,dx$ is doubling: precisely, $\omega$ satisfies
	\begin{equation}\label{doubling-condition}
		\omega(2B)\leq C\omega(B)
	\end{equation}
	for any ball $B$ in $\mathbb{R}^d$, where $\omega(B)=\int_B \omega$;
	
	(2) there exist $0<C_1,C_2<1$ such that for any ball $B$ in $\mathbb{R}^d$,
	\begin{equation}
		\left|\left\{x\in B:\omega(x)\geq C_1\fint_{B}\omega\right\}\right|\geq C_2|B|.
	\end{equation}

\end{lemma}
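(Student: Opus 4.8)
Both statements are classical facts about $A_\infty$ weights, and the plan is to extract them directly from the $\exp$--$\log$ form of the $A_\infty$ condition recorded above, using only Jensen's inequality and the elementary bound $\ln t\le t-1$ for $t>0$. The observation that makes this clean is that the quantity $\big(\fint_B\omega\big)\exp\big(\fint_B\ln\omega^{-1}\big)$ is invariant under $\omega\mapsto c\omega$; hence, after fixing a ball $B$, I may normalize so that $\fint_B\omega=1$ (or $\fint_{2B}\omega=1$ when treating the doubling property), prove the estimate in normalized form, and then rescale back.

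For part (2): fix $B$, normalize $\fint_B\omega=1$, and for $0<\lambda<1$ set $E_\lambda=\{x\in B:\omega(x)\le\lambda\}$. Split $\int_B\ln\omega^{-1}$ over $E_\lambda$ and $B\setminus E_\lambda$: on $E_\lambda$ one has $\ln\omega^{-1}\ge\ln\lambda^{-1}>0$, while on $B\setminus E_\lambda$ the inequality $\ln\omega\le\omega$ gives $\ln\omega^{-1}\ge-\omega$, so $\int_{B\setminus E_\lambda}\ln\omega^{-1}\ge-\int_B\omega=-|B|$. If $|E_\lambda|\ge\tfrac12|B|$ this forces $\fint_B\ln\omega^{-1}\ge\tfrac12\ln\lambda^{-1}-1$, hence $[\omega]_{A_\infty}\ge\exp\big(\fint_B\ln\omega^{-1}\big)\ge e^{-1}\lambda^{-1/2}$, i.e.\ $\lambda\ge(e[\omega]_{A_\infty})^{-2}$. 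Reading this contrapositively and choosing $C_1=\tfrac12(e[\omega]_{A_\infty})^{-2}\in(0,1)$ shows $|E_{C_1}|<\tfrac12|B|$, which after undoing the normalization is exactly $|\{x\in B:\omega(x)\ge C_1\fint_B\omega\}|\ge\tfrac12|B|$; so $C_2=\tfrac12$ works.

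For part (1) I would not use part (2)---it never compares $B$ with $2B$---but apply the defining inequality on the ball $2B$. Normalizing $\fint_{2B}\omega=1$, write $\fint_{2B}\ln\omega^{-1}=2^{-d}\fint_B\ln\omega^{-1}+|2B|^{-1}\int_{2B\setminus B}\ln\omega^{-1}$. Jensen applied to the concave function $\ln$ gives $\fint_B\ln\omega^{-1}\ge-\ln\fint_B\omega$, and as before $\int_{2B\setminus B}\ln\omega^{-1}\ge-\int_{2B}\omega=-|2B|$; hence $\fint_{2B}\ln\omega^{-1}\ge-2^{-d}\ln\fint_B\omega-1$, so $[\omega]_{A_\infty}\ge e^{-1}(\fint_B\omega)^{-2^{-d}}$ and therefore $\fint_B\omega\ge(e[\omega]_{A_\infty})^{-2^{d}}$. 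Rescaling back turns this into $\omega(B)/|B|\ge(e[\omega]_{A_\infty})^{-2^{d}}\,\omega(2B)/|2B|$, which is precisely $\omega(2B)\le C\,\omega(B)$ with $C=2^{d}(e[\omega]_{A_\infty})^{2^{d}}$.

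The only place that needs care is the lower bound for $\int\ln\omega^{-1}$ over the region where $\omega$ is large: there is no pointwise lower bound for $\ln\omega^{-1}$ there except the linear one $\ln\omega^{-1}\ge-\omega$, which is exactly why the $L^1$-average $\fint_B\omega$ (pinned down by the normalization) has to enter the estimate; everything else is bookkeeping. Alternatively one may simply quote the standard equivalence $\omega\in A_\infty\Leftrightarrow\omega\in A_p$ for some finite $p$, together with the reverse H\"older inequality (see \cite{Grafakos-book}), from which both (1) and (2) follow by routine arguments, but the self-contained computation above is shorter and keeps the constants explicit.
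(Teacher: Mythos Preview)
Your argument is correct. For part (2) the splitting of $\int_B\ln\omega^{-1}$ over $E_\lambda$ and its complement, together with the elementary bound $\ln\omega^{-1}\ge-\omega$, is exactly what is needed, and the contrapositive reading gives the level-set estimate with explicit constants $C_1=\tfrac12(e[\omega]_{A_\infty})^{-2}$ and $C_2=\tfrac12$. For part (1) the same idea, applied on $2B$ with Jensen's inequality used on the inner ball $B$, yields the doubling constant $C=2^d(e[\omega]_{A_\infty})^{2^d}$; the computations check out line by line.

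Comparison with the paper: the paper does not supply a proof of this lemma at all---it simply refers the reader to standard textbooks (Duoandikoetxea, Grafakos, Stein). Those references typically derive (1) and (2) via the chain ``$A_\infty\Rightarrow$ reverse H\"older $\Rightarrow A_p$ for some $p<\infty\Rightarrow$ doubling and the level-set estimate'', which is the route you mention in your final paragraph. Your main argument is more direct: it works entirely from the $\exp$--$\log$ characterization \emph{as stated in this paper}, avoids the reverse H\"older machinery, and produces explicit constants depending only on $d$ and $[\omega]_{A_\infty}$. That is a genuine gain in self-containedness here, since the paper's chosen definition of $A_\infty$ is precisely the Fujii--Wilson/Hru\v{s}\v{c}ev form your computation exploits.
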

\begin{proof}
	See e.g. \cite{D-book}, \cite{Grafakos-book} or \cite{Stein-book}.
\end{proof}

For $V\in A_\infty$, the maximal\ function was introduced by Shen in \cite{Shen-1994-IUMJ}, motivated by the work of Fefferman and Phong in \cite{Fefferman-1983}.
For $x \in \mathbb{R}^d, r>0$, let
$$
\psi(x, r)=\frac{1}{r^{d-2}} \int_{B(x, r)} V(y) dy,
$$
and the Fefferman-Phong-Shen maximal function is defined as
\begin{equation}\label{maximal function}
m(x, V)=\inf _{r>0}\left\{\frac{1}{r}: \psi(x, r) \leq 1\right\}.
\end{equation}

Some auxiliary lemmas about properties of $m(x,V)$ referring the reader to  for a detailed presentation are list below. We omit the proofs in our exposition.

\begin{prop}\label{upper-bound-V}
If $V$ satisfies \eqref{V}, then for a.e. $x \in \mathbb{R}^{d}$ , 
 $0 < r \leq R  < \infty$,
\begin{equation*}
\psi(x, r) \leq C \bigg(\frac{r}{R}\bigg)^{2-\frac dq} \psi(x, R)
\end{equation*}
where $1<q\leq\infty$ and
\begin{equation*}
V(x) \leq C m(x, V)^{2}.
\end{equation*}
\end{prop}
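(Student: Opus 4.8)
The plan is to establish the two inequalities in turn, both being consequences of the $B_\infty$ condition \eqref{V} and the fact — already recorded in the text — that such a $V$ is an $A_\infty$ weight. First I would prove the scaling estimate for $\psi(x,r)$. The heart of the matter is that the doubling property of $\omega = V\,dx$ (Lemma \ref{A-1}(1)) forces a controlled growth of $\int_{B(x,r)}V$ as $r$ increases: iterating the doubling inequality across dyadic annuli gives $\int_{B(x,r)}V \leq C (r/R)^{\theta}\int_{B(x,R)}V$ for $r\leq R$, where $\theta$ is determined by the doubling constant. Actually, since $V\in B_\infty \subset A_\infty$, one has the stronger \emph{reverse} control as well, and the sharp exponent here comes from the reverse-H\"older exponent of $V$: by the self-improving property of $B_\infty$ (or $A_\infty$) weights, $V$ lies in the reverse-H\"older class $RH_q$ for some $q>1$, i.e. $\left(\fint_B V^q\right)^{1/q}\leq C\fint_B V$, and in fact for $B_\infty$ weights this holds for every $q\le\infty$ in the form $\|V\|_{L^\infty(B)}\leq C\fint_B V$, which is exactly \eqref{V}. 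From $RH_q$ one deduces, via H\"older's inequality on $B(x,r)\subset B(x,R)$,
\[
\int_{B(x,r)} V \;\le\; |B(x,r)|^{1-\frac1q}\Big(\int_{B(x,r)}V^q\Big)^{\frac1q} \;\le\; |B(x,r)|^{1-\frac1q}\Big(\int_{B(x,R)}V^q\Big)^{\frac1q}\;\le\; C\Big(\frac{r}{R}\Big)^{d(1-\frac1q)}\int_{B(x,R)}V,
\]
and dividing by $r^{d-2}$ and regrouping the powers of $r/R$ yields $\psi(x,r)\le C(r/R)^{2-d/q}\psi(x,R)$, which is the claimed bound.

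Next I would derive $V(x)\le C\, m(x,V)^2$ for a.e.\ $x$. Fix a Lebesgue point $x$ of $V$ and write $r_0 = 1/m(x,V)$ (if $m(x,V)=\infty$ the inequality is vacuous, and the $B_\infty$ condition precludes $m(x,V)=0$ at a point where $V(x)>0$). By definition of the infimum in \eqref{maximal function} and continuity of $r\mapsto\psi(x,r)$, one has $\psi(x,r_0)\le 1$, i.e. $\frac{1}{r_0^{d-2}}\int_{B(x,r_0)}V \le 1$, hence $\fint_{B(x,r_0)}V \le C r_0^{-2}$ (absorbing $|B(x,r_0)|\sim r_0^d$). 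Now invoke \eqref{V} directly with $B = B(x,r_0)$: since $x$ is a Lebesgue point, $V(x)\le \|V\|_{L^\infty(B(x,r_0))}\le C\fint_{B(x,r_0)}V \le C r_0^{-2} = C\, m(x,V)^2$. (More carefully, one first gets this with $B(x,2r_0)$ or a slightly larger ball to have strict inequality $\psi(x,r)<1$, then lets the radius shrink — the doubling of $V\,dx$ makes this harmless.)

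I expect the main obstacle to be bookkeeping rather than conceptual: namely, making the reverse-H\"older exponent $q$ in the first inequality explicit and consistent with its appearance in the statement (the statement phrases it as "$1<q\le\infty$", suggesting that \emph{any} such $q$ for which $V\in RH_q$ is admissible, with the constant $C$ depending on $q$ and the $B_\infty$ constant of $V$; for $B_\infty$ one may even take $q=\infty$, recovering $\psi(x,r)\le C(r/R)^2\psi(x,R)$). The only other delicate point is handling the "a.e.\ $x$" and the infimum in the definition of $m(x,V)$ carefully — ensuring $\psi(x,\cdot)$ is continuous and that the infimum is attained in the limit — but this is routine once one notes $\psi(x,r)$ is finite for all $r$ by $V\in L^\infty_{\loc}$ and tends to $0$ as $r\to 0$ and to $\infty$ as $r\to\infty$ (the latter using $V>0$ on a set of positive measure). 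Since the text explicitly says the proofs are omitted in the exposition, I would in fact state these two bounds with a one-line citation to \cite{Shen-1994-IUMJ, Shen-1995} and the $A_\infty$/$RH_q$ machinery in \cite{Grafakos-book, Stein-book}, rather than reproduce the computation.
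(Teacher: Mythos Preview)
Your proposal is correct and aligns with the paper's treatment: the paper's proof consists solely of the citation ``See \cite{Shen-1994-IUMJ}'', and your final recommendation to cite \cite{Shen-1994-IUMJ, Shen-1995} with a nod to the $RH_q$ machinery is exactly what the paper does. The sketch you provide---H\"older's inequality combined with the reverse-H\"older property for the first estimate, and the $B_\infty$ condition applied at scale $r_0=1/m(x,V)$ for the second---is the standard argument behind the cited result and is sound.
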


\begin{proof}
  See \cite{Shen-1994-IUMJ}.
\end{proof}

\begin{prop}\label{prop2.6}
Assume that $V$ satisfies \eqref{V} and let $\hat{r}=\frac{1}{m(x,V)}$. Then
\begin{align*}
\psi\left(x, \hat{r}\right)=1.
\end{align*}
Moreover, $r\sim \hat{r}$ if and only if $\psi(x, r)\sim 1$.
\end{prop}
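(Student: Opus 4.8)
The plan is to derive both statements from Proposition~\ref{upper-bound-V}, the continuity of $r\mapsto\psi(x,r)$, and the doubling property of $V\,dx$ from Lemma~\ref{A-1}(1); the one substantive input is that $\psi(x,\cdot)$ increases at least like a positive power of the radius. First I record the behaviour of $\psi(x,\cdot)$. Since $V\in L^1_{\loc}(\mathbb{R}^d)$ and $V>0$ a.e., the function $r\mapsto\psi(x,r)=r^{2-d}\int_{B(x,r)}V$ is continuous and strictly positive on $(0,\infty)$. Taking $q=\infty$ in Proposition~\ref{upper-bound-V} gives
\begin{equation}\label{eq-prop26-comp}
\psi(x,r)\le C\Big(\tfrac{r}{R}\Big)^{2}\psi(x,R)\qquad\text{for }0<r\le R,
\end{equation}
with $C$ depending only on $d$ and the $B_\infty$ constant of $V$. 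Fixing $R$ and sending $r\to0^{+}$ shows $\psi(x,r)\to0$; fixing $r$ and sending $R\to\infty$ (using $\psi(x,r)>0$) shows $\psi(x,R)\to\infty$. Hence $S:=\{r>0:\psi(x,r)\le1\}$ is non-empty, closed in $(0,\infty)$, and bounded above.

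Put $\hat{r}=\sup S$. Then $\hat{r}\in S$, so $\psi(x,\hat{r})\le1$; and $\psi(x,r)>1$ for every $r>\hat{r}$, so continuity gives $\psi(x,\hat{r})=\lim_{r\to\hat{r}^{+}}\psi(x,r)\ge1$, whence $\psi(x,\hat{r})=1$. Since $r\mapsto1/r$ is decreasing, $m(x,V)=\inf_{r\in S}(1/r)=1/\hat{r}$, which proves the first assertion (and that $0<m(x,V)<\infty$). For the forward direction of the ``moreover'' part, assume $c\hat{r}\le r\le C\hat{r}$. If $\hat{r}\le r$, then \eqref{eq-prop26-comp} gives $1=\psi(x,\hat{r})\le C(\hat{r}/r)^{2}\psi(x,r)\le C\psi(x,r)$, while iterating $\int_{B(x,2\rho)}V\le C_\mu\int_{B(x,\rho)}V$ a bounded number of times from scale $\hat{r}$ up to scale $r$, together with $\psi(x,\hat{r})=1$ and $r\ge\hat{r}$, bounds $\psi(x,r)$ above by a constant. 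If instead $r\le\hat{r}$, then \eqref{eq-prop26-comp} already gives $\psi(x,r)\le C(r/\hat{r})^{2}\le C$, and running the doubling inequality from scale $r$ up to scale $\hat{r}$ with $\psi(x,\hat{r})=1$ gives the matching lower bound. In either case $\psi(x,r)\sim1$.

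For the converse, assume $c_{0}\le\psi(x,r)\le C_{0}$. If $r\ge\hat{r}$, then \eqref{eq-prop26-comp} yields $1=\psi(x,\hat{r})\le C(\hat{r}/r)^{2}\psi(x,r)\le CC_{0}(\hat{r}/r)^{2}$, so $r\le(CC_{0})^{1/2}\hat{r}$; if $r\le\hat{r}$, then \eqref{eq-prop26-comp} yields $c_{0}\le\psi(x,r)\le C(r/\hat{r})^{2}\psi(x,\hat{r})=C(r/\hat{r})^{2}$, so $r\ge(c_{0}/C)^{1/2}\hat{r}$. Either way $r\sim\hat{r}$. I do not anticipate a real obstacle: once \eqref{eq-prop26-comp} and the doubling inequality are in hand, everything is a short argument using the intermediate value theorem and power comparisons, and the only point needing attention is the bookkeeping that keeps all implied constants depending only on $d$, the $B_\infty$ constant of $V$, and the comparability constants in the hypotheses.
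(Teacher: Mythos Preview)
Your proof is correct. The paper itself does not prove this proposition but merely refers the reader to \cite{Shen-1994-IUMJ}; your argument is essentially the standard one found there, combining the power-type comparison \eqref{eq-prop26-comp} (from Proposition~\ref{upper-bound-V} with $q=\infty$) and the doubling property of $V\,dx$ with the continuity of $r\mapsto\psi(x,r)$ to locate $\hat r$ via the intermediate value theorem.
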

\begin{proof}
  See \cite{Shen-1994-IUMJ}.
\end{proof}

\begin{lemma}\label{lemma-function m}
There exist $C>0, c>0$ and $k_{0}>0$ depending only on $d$ and the constant in \eqref{V}, such that for $x,y$ in $\mathbb{R}^d$,
\begin{align}
m(x, V) &\sim m(y, V) \quad \text { if }|x-y| \leq \frac{C}{m(x, V)} \label{estimate-m1},\\
c(1+|x-y| m(x, V))^{-k_{0}} &\leq  \frac{m(x, V)}{m(y, V)}\leq C(1+|x-y| m(x, V))^{k_0 /\left(k_{0}+1\right)}\label{estimate-m2},\\
c(1+|x-y| m(y, V))^{1 /\left(k_{0}+1\right)} & \leq 1+|x-y| m(x, V)
 \leq C(1+|x-y| m(y, V))^{k_{0}+1} \label{estimate-m3}.
\end{align}
\end{lemma}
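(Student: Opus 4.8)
The plan is to derive all three estimates from the single scaling inequality in Proposition \ref{upper-bound-V} together with the normalization $\psi(x,1/m(x,V))=1$ from Proposition \ref{prop2.6}. First I would prove \eqref{estimate-m1}. Set $\hat r = 1/m(x,V)$, so $\psi(x,\hat r)=1$. If $|x-y|\le C\hat r$ with $C$ small, then comparing the defining integrals $\psi(x,r)=r^{2-d}\int_{B(x,r)}V$ and $\psi(y,r)=r^{2-d}\int_{B(y,r)}V$ over balls of comparable radius $r\sim\hat r$: since $B(y,c\hat r)\subset B(x,\hat r)\subset B(y,C'\hat r)$ and $V\,dx$ is doubling (Lemma \ref{A-1}(1)), we get $\psi(y,C'\hat r)\sim\psi(x,\hat r)=1$. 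The scaling estimate in Proposition \ref{upper-bound-V} (applied with an $L^q$, $q>d/2$, say via the fact that $V\in B_\infty$ is in the reverse-H\"older class $RH_q$ for some $q>d/2$ — this gives the exponent $2-d/q>0$) then upgrades comparability of $\psi$ at one scale to $\psi(y,r)\sim 1$ for all $r\sim\hat r$, and by Proposition \ref{prop2.6} this forces $1/m(y,V)\sim\hat r$, i.e. $m(y,V)\sim m(x,V)$.

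Next, \eqref{estimate-m2} and \eqref{estimate-m3}. These are the global versions, valid for all $x,y$, quantifying how $m$ can degenerate as $|x-y|$ grows. I would prove the two chains of inequalities by an iteration/chaining argument along a sequence of points $x=z_0, z_1,\dots, z_N=y$ with consecutive spacing $|z_{j}-z_{j+1}|\le C/m(z_j,V)$, so that \eqref{estimate-m1} applies at each step and $m$ changes only by a bounded factor between neighbors; the number of steps $N$ is controlled by a sum that telescopes to give a power of $1+|x-y|m(x,V)$. Concretely, to get the upper bound $m(x,V)/m(y,V)\le C(1+|x-y|m(x,V))^{k_0/(k_0+1)}$ one shows that crossing the segment from $x$ to $y$ requires $N\lesssim (1+|x-y|m(x,V))^{1/(k_0+1)}$ steps, and the lower bound is symmetric after swapping roles. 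The exponent $k_0$ is extracted as the optimal constant making the self-consistent inequality close; in practice one first establishes a crude two-sided polynomial bound with some large exponent and then bootstraps, or one simply takes $k_0$ large enough at the outset. Estimate \eqref{estimate-m3} is then a direct algebraic reformulation of \eqref{estimate-m2}: writing $1+|x-y|m(x,V)$ in terms of $1+|x-y|m(y,V)$ using the bounds in \eqref{estimate-m2} on the ratio $m(x,V)/m(y,V)$.

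The main obstacle I anticipate is the bookkeeping in the chaining argument for \eqref{estimate-m2}: one must choose the intermediate points and the step lengths so that the local comparability \eqref{estimate-m1} is legitimately applicable at every link (the constant $C$ there is fixed, so each step length must be at most $C/m(z_j,V)$, not $C/m(x,V)$), and then show the resulting count $N$ obeys the claimed power law. This is essentially a discrete Gr\"onwall/geometric-series estimate, but getting the exponent exactly $k_0/(k_0+1)$ rather than a weaker power requires care. Since, however, these are standard facts about the Fefferman–Phong–Shen maximal function and are quoted verbatim from \cite{Shen-1994-IUMJ}, I would in fact simply cite that reference for the detailed verification and record the statement as stated.
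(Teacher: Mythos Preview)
Your proposal is correct and matches the paper's treatment: the paper simply writes ``See \cite{Shen-1994-IUMJ}'' for this lemma, and your final line does exactly that, while the sketch you give (doubling plus the scaling from Proposition~\ref{upper-bound-V} for \eqref{estimate-m1}, then a chaining argument for \eqref{estimate-m2}--\eqref{estimate-m3}) is the standard argument carried out in that reference.
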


\begin{proof}
  See \cite{Shen-1994-IUMJ}.
\end{proof}

 We end this section with the Fefferman-Phong type estimate and we refer the reader to \cite{Shen-1994-IUMJ} or \cite{Shen-1995}.

\begin{lemma}\label{Fefferman-Phong}
  Let $u\in C_0^1(\R^d)$. Assume $V$ satisfies \eqref{V}. Then
    \begin{equation}\label{Fefferman-Phong-1}
      \int_{\Omega}|u(x)|^2m(x,V)^{2}\,dx\leq C\int_{\Omega}|\nabla u|^2\,dx+C\int_{\Omega}V|u|^2\,dx.
    \end{equation}
\end{lemma}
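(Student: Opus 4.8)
The plan is to prove the pointwise-in-scale inequality locally and then sum. First I would fix $u \in C_0^1(\mathbb{R}^d)$ and, for each point $x$, introduce the critical radius $\hat r(x) = 1/m(x,V)$, which by Proposition \ref{prop2.6} satisfies $\psi(x,\hat r(x)) = 1$, i.e. $\hat r(x)^{2-d}\int_{B(x,\hat r(x))} V = 1$. The heart of the matter is the localized estimate: on a ball $B = B(x_0, \hat r(x_0))$ one has
\begin{equation*}
\int_B |u|^2 m(x,V)^2\,dx \leq C\, \hat r(x_0)^{-2}\int_B |u|^2\,dx \leq C\int_{2B}|\nabla u|^2\,dx + C\,\hat r(x_0)^{-2}\fint_{2B}|u|^2\,dx.
\end{equation*}
The first inequality here uses \eqref{estimate-m1}: on $B$ we have $m(x,V) \sim m(x_0,V) = \hat r(x_0)^{-1}$. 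The second is a Poincaré–Sobolev inequality on the ball $2B$, splitting $u = (u - \bar u_{2B}) + \bar u_{2B}$ and using Poincaré for the first piece.

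Next I would control the remaining average term $\hat r(x_0)^{-2}\fint_{2B}|u|^2$ by the potential term. Since $\psi(x_0, 2\hat r(x_0)) \sim 1$ by Proposition \ref{prop2.6} (as $2\hat r \sim \hat r$), we get $\hat r(x_0)^{-2} \sim \hat r(x_0)^{-d}\int_{2B} V = |2B|^{-1}\int_{2B} V$ up to constants; however this only gives $\hat r(x_0)^{-2}\fint_{2B}|u|^2 \lesssim \left(\fint_{2B}|u|^2\right)\left(\fint_{2B}V\right)$, and to turn this into $\fint_{2B} V|u|^2$ I would invoke the $B_\infty$ hypothesis \eqref{V} directly: $\|V\|_{L^\infty(2B)} \leq C\fint_{2B} V$ is too crude pointwise, so instead I would use that $V \in A_\infty$ together with the reverse-Hölder/self-improvement of $A_\infty$ weights to write $\fint_{2B}|u|^2 \cdot \fint_{2B} V \lesssim \fint_{2B}|u|^2 V$ after a Hölder step — more precisely, on $2B$, $\hat r^{-2}|u|^2 \lesssim$ either $|\nabla u|^2$-controlled oscillation or, where $|u|$ is comparable to its average, is dominated using that $V$ has positive $A_\infty$-density on a definite-measure subset (Lemma \ref{A-1}(2)). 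The cleanest route is actually: bound $\hat r(x_0)^{-2}\fint_{2B}|u|^2$ by $\int_{2B}|\nabla u|^2 + \int_{2B} V|u|^2$ directly via the local inequality $\int_B V|u|^2 \geq c\,\hat r^{-2}\int_B |\bar u_B|^2 - C\int_{2B}|\nabla u|^2$, using $\int_B V \sim \hat r^{d-2}$ and Poincaré to replace $\bar u_B$ by $u$.

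Finally I would pass from local to global by a Vitali-type covering argument: using \eqref{estimate-m2}–\eqref{estimate-m3}, the critical balls $\{B(x,\hat r(x))\}$ have bounded overlap when suitably chosen (the function $\hat r$ is "slowly varying" in the quantitative sense of Lemma \ref{lemma-function m}), so one can select a countable subfamily $\{2B_j\}$ covering $\mathbb{R}^d$ with finite overlap $N = N(d)$. Summing the local estimates over $j$ gives
\begin{equation*}
\int_{\mathbb{R}^d}|u|^2 m(x,V)^2\,dx \leq \sum_j \int_{B_j}|u|^2 m^2 \leq C\sum_j\left(\int_{2B_j}|\nabla u|^2 + \int_{2B_j} V|u|^2\right) \leq CN\left(\int|\nabla u|^2 + \int V|u|^2\right),
\end{equation*}
and restricting to $\Omega$ (or simply noting $\int_\Omega \leq \int_{\mathbb{R}^d}$ for the left side and the right side is over $\Omega$ by support/extension considerations — here I would just take the stated integrals over $\mathbb{R}^d$ and note $u \in C_0^1$) yields \eqref{Fefferman-Phong-1}. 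I expect the main obstacle to be the covering step: constructing a finite-overlap family of critical balls requires care because $\hat r(x)$ varies, and one must use the precise comparison exponents $k_0/(k_0+1)$ in \eqref{estimate-m2} to guarantee that nearby critical balls are comparable in size — this is exactly the mechanism that makes a Besicovitch/Vitali selection produce bounded overlap rather than merely a locally finite cover.
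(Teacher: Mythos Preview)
Your proposal is correct and follows essentially the same approach as the paper: the paper's proof is simply a pointer to \cite[Lemma~1.11]{Shen-1994-IUMJ} with the remark that it ``follows from Proposition~\ref{prop2.6} and Lemma~\ref{lemma-function m} as well as a covering argument,'' and your plan uses exactly these ingredients --- the critical radius $\hat r(x)=1/m(x,V)$ and $\psi(x,\hat r)=1$ (Proposition~\ref{prop2.6}), the comparability $m(x,V)\sim m(x_0,V)$ on $B(x_0,\hat r(x_0))$ and the slowly-varying property for bounded overlap (Lemma~\ref{lemma-function m}), Poincar\'e on each critical ball, and the $A_\infty$ density property (Lemma~\ref{A-1}(2)) to absorb the average term into $\int V|u|^2$. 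Your ``cleanest route'' paragraph, once written out carefully via the positive-density subset $E\subset B$ on which $V\gtrsim \hat r^{-2}$, is precisely the standard mechanism.
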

\begin{proof}
  See \cite[Lemma 1.11]{Shen-1994-IUMJ}. It follows from Proposition \ref{prop2.6} and Lemma \ref{lemma-function m} as well as a covering argument.
\end{proof}




\section{\bf De Giorgi-Nash type estimate and boundary \texorpdfstring{$L^\infty$}{} estimate}\label{section-3}

This section is devoted to a uniform H\"older estimate for the weak solutions of $(\mathcal{L}+V) u=0$.

\begin{thm}\label{Holer-estimate}
   Assume that $A$ satisfies \eqref{ellipticity}-\eqref{symmetric} and $V$ satisfies \eqref{V}. Suppose that $u \in W^{1,2}(B(x_{0}, 2R))$ is a weak solution of $\mathcal{L}u+V u=0$ in $B(x_{0}, 2R)$. Then
$$
|u(x)-u(y)| \leqslant C\left(\frac{|x-y|}{R}\right)^{\alpha} \sup _{B(x_{0}, 2R)}|u|
$$
for any $x, y \in B\left(x_{0}, R/2\right)$.
 \end{thm}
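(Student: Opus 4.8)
The plan is to reduce the assertion to the scale-invariant oscillation estimate
\[
\operatorname*{osc}_{B(x,r)}u\ \le\ C\Big(\tfrac rR\Big)^{\alpha}\sup_{B(x_0,2R)}|u|,\qquad x\in B(x_0,R/2),\ \ 0<r\le R/C,
\]
with $C$ and $\alpha\in(0,1)$ depending only on $d$, $\mu$ and the constant in \eqref{V}; the theorem then follows by taking $r=2|x-y|$ (the range $|x-y|\gtrsim R$ being immediate). Since $V\in L^\infty_{\mathrm{loc}}$, $u$ solves $\mathcal L u=-Vu$ with bounded right-hand side and is in particular continuous, so the real content is that this oscillation decay holds \emph{uniformly}, i.e.\ with constants independent of $R$, of $x_0$, and of $\int_{B(x_0,2R)}V$; a plain appeal to De Giorgi--Nash--Moser for $\mathcal L u=-Vu$ would not achieve this.

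First I would perform a perturbation at scales below the intrinsic scale $1/m(y,V)$, in the spirit of \cite{Shen-1999}. Given $B(y,r)\subset B(x_0,2R)$, let $w\in W^{1,2}(B(y,r))$ solve $\mathcal L w=0$ with $w=u$ on $\partial B(y,r)$; then $h:=u-w\in W^{1,2}_0(B(y,r))$ solves $\mathcal L h=-Vu$, so the Green representation for $\mathcal L$ together with the classical bound $0\le G_{B(y,r)}(z,\zeta)\le C_{d,\mu}|z-\zeta|^{2-d}$ for the Green function of $\mathcal L=-\mathrm{div}(A\nabla)$ yields
\[
|h(z)|\ \le\ \sup_{B(y,r)}|u|\int_{B(y,r)}|z-\zeta|^{2-d}V(\zeta)\,d\zeta\ \le\ C\,\psi(y,3r)\sup_{B(y,r)}|u|\qquad(z\in B(y,r)),
\]
the last step being a dyadic splitting of the integral made summable by Proposition~\ref{upper-bound-V}. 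Combining this with the De Giorgi--Nash estimate and the maximum principle for the $\mathcal L$-harmonic $w$ gives $\operatorname*{osc}_{B(y,\theta r)}u\le C\theta^{\alpha_0}\operatorname*{osc}_{B(y,r)}u+C\psi(y,3r)\sup_{B(y,r)}|u|$. By Propositions~\ref{upper-bound-V} and \ref{prop2.6}, for $r\le\rho_*(y):=\min\{c/m(y,V),\,R\}$ one has $\psi(y,3r)\le C\big(r\,m(y,V)\big)^{\beta}$ for some $\beta>0$; choosing $\theta$ small and iterating by a standard lemma then produces $\operatorname*{osc}_{B(y,r)}u\le C\big(r/\rho_*(y)\big)^{\alpha}\sup_{B(y,\rho_*(y))}|u|$ for $0<r\le\rho_*(y)$.

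To pass to the ambient scale $R$ I would use the interior $L^\infty$ decay estimate $\sup_{B(y,s)}|u|\le C_N\big(1+s\,m(y,V)\big)^{-N}\big(\fint_{B(y,2s)}|u|^2\big)^{1/2}$, valid for every $N\ge1$, which comes from Moser iteration for $\mathcal L u+Vu=0$ --- retaining the nonnegative term $\int V|u|^2$ --- together with the Fefferman--Phong inequality (Lemma~\ref{Fefferman-Phong}). Applying it at $s\sim R$ gives $\sup_{B(y,\rho_*(y))}|u|\le C\big(\rho_*(y)/R\big)^{\alpha}\sup_{B(x_0,2R)}|u|$ --- trivially when $1/m(y,V)>R$ (so $\rho_*(y)=R$), and via $\big(R\,m(y,V)\big)^{-1}\le C\rho_*(y)/R$ otherwise. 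It then remains to split cases: for $x\in B(x_0,R/2)$ and $0<r\le R/C$, if $r\le\rho_*(x)$ one chains the last two displays; if $\rho_*(x)<r\le R/C$ then $1/m(x,V)\lesssim r\le R$, and the decay estimate (with $N=1$) gives $\operatorname*{osc}_{B(x,r)}u\le 2\sup_{B(x,R/C)}|u|\le C\big(R\,m(x,V)\big)^{-1}\sup_{B(x_0,2R)}|u|\le C(r/R)^{\alpha}\sup_{B(x_0,2R)}|u|$. This yields the displayed oscillation estimate, hence the theorem. I expect the main obstacle to be the perturbation step: converting the crude bound $\int_{B(y,r)}|z-\zeta|^{2-d}V(\zeta)\,d\zeta\le C\psi(y,3r)$ into the genuinely small multiplier $\big(r\,m(y,V)\big)^{\beta}$ at the correct scales via Propositions~\ref{upper-bound-V}--\ref{prop2.6}, and the bookkeeping needed to upgrade an estimate anchored at the intrinsic scale $1/m(y,V)$ to one anchored at the ambient scale $R$; establishing the interior $L^\infty$ decay used above, uniformly in the data, is a further (essentially known) ingredient.
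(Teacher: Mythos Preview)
Your argument is correct and rests on the same two ingredients as the paper---a perturbation comparing $u$ to an $\mathcal L$-harmonic function, and the $L^\infty$ decay $\sup_{B(y,s)}|u|\le C_k(1+s\,m(y,V))^{-k}(\fint_{B(y,2s)}|u|^2)^{1/2}$---but the organization is genuinely different. The paper performs the perturbation \emph{once} at the ambient scale $R$: it sets $v=u+\int_{B}\widetilde\Gamma(\cdot,z)V(z)u(z)\,dz$ on $B=B(x_0,R)$ (so $\mathcal Lv=0$), applies De Giorgi--Nash to $v$, and then estimates the H\"older continuity of the correction $\int_{B}(\widetilde\Gamma(x,z)-\widetilde\Gamma(y,z))V(z)u(z)\,dz$ directly by a near/far splitting; this yields $|u(x)-u(y)|\le C(|x-y|/R)^\alpha\{1+Rm(x_0,V)\}^{k_1}\sup_B|u|$, and the unwanted factor is then removed in a single stroke via the cutoff representation $u=\Gamma*[(\mathcal L+V)(u\varphi)]$ and the decay of the fundamental solution $\Gamma$ of $\mathcal L+V$ (Proposition~\ref{estimate of fundamental solution-1}). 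You instead perturb \emph{iteratively} at scales $r\le 1/m(y,V)$, where the correction is simply small in sup norm (no H\"older estimate of the potential term needed), obtain a clean oscillation decay up to the intrinsic scale, and then invoke the interior $L^\infty$ decay directly to bridge from $1/m(y,V)$ to $R$. Your route trades the paper's near/far computation for a standard iteration lemma and avoids any appeal to the fundamental solution of $\mathcal L+V$; the paper's route is a one-shot argument that avoids iteration but needs the H\"older bound on the Newtonian potential of $V$ and the machinery of Proposition~\ref{estimate of fundamental solution-1}.
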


We begin with the boundary $L^\infty$ estimate.
\begin{lemma}\label{pointwise estimate}
 Suppose $A$ satisfies \eqref{ellipticity}-\eqref{symmetric} and $V(x)>0$ a.e. in $\mathbb{R}^d$.   Let $u$ be a weak solution of
	\begin{equation}\label{v_B}
		\mathcal{L} u+V u=0 \quad \text{in}\quad D(x_0,r) ~~\text{and} \\
		~~~~~\frac{\partial u}{\partial \nu} =0\quad \text{on}~~\Delta(x_0,r),
	\end{equation}
for some $x_0\in \overline{\Omega}$ and $r>0$. Then
$$
\sup_{x \in D(x_0,\frac r2)}|u(x)| \leq \frac{C_{k}}{\left(1+r m\left(x_{0}, V\right)\right)^{k}}\left( \fint_{D(x_0,r)}|u(x)|^{2} d x\right)^{1 / 2}
$$
for any integer $k>0$.
\end{lemma}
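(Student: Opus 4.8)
The plan is to run Moser iteration together with a Caccioppoli--Fefferman--Phong energy estimate and the geometric properties of $m(\cdot,V)$ collected in Section~\ref{section-2}, so as to produce a super-polynomial gain over a long dyadic range of scales. First I would dispose of the regime $r\,m(x_0,V)\le C_0$, for any fixed $C_0$: there $(1+r\,m(x_0,V))^{-k}$ is comparable to $1$ with constants depending only on $k$ and $C_0$, so the assertion reduces to the boundary subsolution estimate $\sup_{D(z,\rho/2)}|u|\le C\big(\fint_{D(z,\rho)}|u|^2\big)^{1/2}$ for Neumann solutions. The latter holds on the Lipschitz domain because the nonnegative term $\int V|u|^2$ is simply discarded in the Caccioppoli step (the conormal condition makes $\varphi^2u$ an admissible test function and kills the boundary term), and $\Omega$ is a $W^{1,2}$-extension domain, so the Sobolev inequality is available on $D(z,\rho)$. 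This subsolution estimate is also used at the very end.

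So assume $r\,m(x_0,V)$ is large. For any $z\in\overline\Omega$ and $0<\rho$ with $D(z,2\rho)\subset D(x_0,r)$, testing the equation against $\varphi^2u$ with a standard cutoff $\varphi$ gives $\int_{D(z,\rho)}(|\nabla u|^2+V|u|^2)\le C\rho^{-2}\int_{D(z,2\rho)}|u|^2$, and then applying Lemma~\ref{Fefferman-Phong} to a $W^{1,2}(\mathbb R^d)$-extension of $\varphi u$ (only the $\int_\Omega$ terms appear on the right of \eqref{Fefferman-Phong-1}, so the behaviour of the extension off $\Omega$ is immaterial) upgrades this to
\[
\int_{D(z,\rho)}|u(x)|^2\,m(x,V)^2\,dx\ \le\ \frac{C}{\rho^2}\int_{D(z,2\rho)}|u|^2\,dx .
\]
Next I would use \eqref{estimate-m1} and \eqref{estimate-m3} to show the pointwise lower bound $m(x,V)\ge c\,\rho^{-1}\big(\rho\,m(z,V)\big)^{1/(k_0+1)}$ for every $x\in D(z,2\rho)$, provided $\rho\ge c_2/m(z,V)$. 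Inserting this into the previous display produces the one-step decay
\[
\fint_{D(z,\rho)}|u|^2\,dx\ \le\ \frac{C}{\big(\rho\,m(z,V)\big)^{2/(k_0+1)}}\fint_{D(z,2\rho)}|u|^2\,dx ,\qquad \rho\ge \frac{c_2}{m(z,V)} .
\]

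Now iterate this along the dyadic scales $\rho_j=2^{j}c_0/m(z,V)$, $j=0,\dots,N$, with $2^N\sim\tau\,m(z,V)$, for a radius $\tau$ with $D(z,2\tau)\subset D(x_0,r)$ and $\tau\,m(z,V)$ large. The $j$-th gain is a factor $C\,2^{-2j/(k_0+1)}$, so the telescoped product of the gains is $C^N2^{-N(N-1)/(k_0+1)}$, which is bounded by $C_k\,2^{-2kN}$ for every $k$ because the super-exponential factor defeats the geometric constant. A final subsolution estimate on the innermost ball (where $\rho\,m(z,V)\sim1$) then gives the pointwise bound $|u(z)|\le C_k\big(\tau\,m(z,V)\big)^{-k}\big(\fint_{D(z,\tau)}|u|^2\big)^{1/2}$. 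To conclude, fix any $y\in D(x_0,r/2)$ and apply this with $z=y$ and $\tau=r/4$: since $D(y,r/2)\subset D(x_0,r)$ the hypothesis holds, $\fint_{D(y,r/4)}|u|^2\le C\fint_{D(x_0,r)}|u|^2$ by volume comparison, and — this is the crucial point — \eqref{estimate-m1}--\eqref{estimate-m3} force $m(y,V)\ge c\,r^{-1}\big(r\,m(x_0,V)\big)^{1/(k_0+1)}$ for every $y$ with $|y-x_0|\le r/2$, so $\tau\,m(y,V)\gtrsim\big(r\,m(x_0,V)\big)^{1/(k_0+1)}$ is large. Hence $|u(y)|\le C_k\big(r\,m(x_0,V)\big)^{-k/(k_0+1)}\big(\fint_{D(x_0,r)}|u|^2\big)^{1/2}$, which is the claimed estimate after relabelling $k$ (replace $k$ by $k(k_0+1)$).

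The main obstacle is exactly this last step: the supremum is over the whole half-ball $D(x_0,r/2)$ while the average is only over $D(x_0,r)$, so the dyadic iteration centered at $x_0$ alone would gain a super-polynomial factor only for the tiny ball $D\big(x_0,c/m(x_0,V)\big)$; one genuinely has to re-center the iteration at the point $y$ where the bound is wanted and control $m(y,V)$ from below via the propagation estimates \eqref{estimate-m1}--\eqref{estimate-m3}. A secondary technical point — justifying the use of Lemma~\ref{Fefferman-Phong} for the function $\varphi u$, which does not vanish on $\partial\Omega$ — is handled by the Sobolev extension as above, or alternatively by first flattening $\partial\Omega$ by a bi-Lipschitz change of variables and reflecting $u$, $A$, $V$ evenly, which preserves ellipticity, boundedness, and the class $B_\infty$ (and keeps $m$ comparable) and reduces everything to the interior estimate.
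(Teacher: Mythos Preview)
Your proposal is correct and follows essentially the same route as the paper: Moser iteration for the basic local $L^\infty$ bound, then the Fefferman--Phong inequality (Lemma~\ref{Fefferman-Phong}) together with the slow-variation properties of $m(\cdot,V)$ (Lemma~\ref{lemma-function m}) to produce the polynomial decay factor. The emphasis is complementary: the paper writes out the Moser step in full and then simply says ``with Lemma~\ref{Fefferman-Phong} as well as Lemma~\ref{lemma-function m} at disposal, we complete the proof'', whereas you take the subsolution estimate as known and spell out precisely the dyadic decay iteration and the re-centering at each $y\in D(x_0,r/2)$ via \eqref{estimate-m2}--\eqref{estimate-m3}; this is exactly the content the paper is suppressing, and your handling of the two technical points (small-$r$ regime, extension of $\varphi u$ for Lemma~\ref{Fefferman-Phong}) is sound.
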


\begin{proof}
    We argue by Moser's method.  Without loss of generality, we may assume that $x_0\in\partial\Omega$. For $m>0$, we let
    \begin{align*}
        v_m(x)=\begin{cases}
            u(x),\quad &\text{if}\ 0<u(x)<m;\\
            0,&\text{otherwise}
        \end{cases}
    \end{align*}
    and $\varphi\in C_0^\infty(B(x_0,r))$. Note that, for $k\geq 1$, set the test function $\theta=v_m^k \varphi^2$ and then
    \begin{align}\label{3.0}
        {\int_{D(x_0,r)}}\langle A\nabla u,\nabla\theta\rangle\,dx \nonumber
        &=-\int_{D(x_0,r)}\langle {\rm div}(A\nabla u),\theta\rangle\,dx+\int_{\partial D(x_0,r)}\frac{\partial u}{\partial \nu}\theta\,d\sigma\\\nonumber
        &=-\int_{D(x_0,r)}\langle Vu,\theta\rangle\,dx+\int_{\Delta(x_0,r) }\frac{\partial u}{\partial \nu}\theta\,d\sigma\\\nonumber
        &\leq 0
    \end{align}
    where we have used the assumption $V>0$ a.e..
    Next, let $\ell=(1+k)/2$, and direct calculation yields that
    \[
    \frac{k}{\ell^2}\int_{D(x_0,r)} \langle A\nabla  v_m^\ell, \nabla v_m^\ell\rangle\varphi^2\,dx+\frac{2}{\ell}\int_{D(x_0,r)} \langle A\nabla v_m^\ell, \nabla\varphi\rangle\varphi v_m^\ell\,dx\leq0.
    \]
    Note that
    \begin{align*}
        \langle A\nabla (v_m^\ell\varphi), \nabla (v_m^\ell\varphi)\rangle=\langle A\nabla  v_m^\ell, \nabla v_m^\ell\rangle\varphi^2+\langle A\nabla \varphi, \nabla \varphi\rangle v_m^{2\ell}+2\langle A\nabla v_m^\ell, \nabla \varphi\rangle v_m^\ell \varphi
    \end{align*}
    where the symmetry of $A$ is used. Then
    \begin{equation}\label{3.1}
     \begin{aligned}
        &\int_{D(x_0,r)} \langle A\nabla(v_m^\ell\varphi), \nabla (v_m^\ell\varphi)\rangle\,dx\\
        &\leq\int_{D(x_0,r)} \langle A\nabla v_m^\ell, \nabla v_m^\ell\rangle\varphi^2\,dx+\int_{D(x_0,r)}\langle A\nabla \varphi, \nabla \varphi\rangle v_m^{2\ell}\,dx+2\int_{D(x_0,r)} \langle A\nabla v_m^\ell, \nabla \varphi\rangle v_m^\ell \varphi\,dx\\
        &\leq\left(2-\frac{2\ell}{k}\right)\int_{D(x_0,r)} \langle A\nabla v_m^\ell, \nabla\varphi\rangle v_m^\ell\varphi\,dx +\int_{D(x_0,r)} \langle A\nabla \varphi, \nabla \varphi\rangle v_m^{2\ell}\,dx\\
        &=\left(1-\frac{1}{k}\right)\int_{D(x_0,r)} \langle A\nabla v_m^\ell, \nabla\varphi\rangle v_m^\ell\varphi\,dx +\int_{D(x_0,r)} \langle A\nabla \varphi, \nabla \varphi\rangle v_m^{2\ell}\,dx.
     \end{aligned}
    \end{equation}
    Again
    \[
    \langle A\nabla(v_m^\ell\varphi),\nabla\varphi\rangle v_m^\ell=\langle A\nabla \varphi, \nabla \varphi\rangle v_m^{2\ell}+\langle A\nabla v_m^\ell, \nabla \varphi\rangle v_m^\ell \varphi.
    \]
    Plug this into \eqref{3.1}, and use Cauchy's inequality, thus we have
    \[
    \int_{D(x_0,r)}|\nabla(v_m^\ell\varphi)|^2\,dx\leq C\left(1+\frac{1}{k^2}\right) \int_{D(x_0,r)} (v_m^\ell|\nabla \varphi|)^2\,dx
    \]
    where \eqref{ellipticity} is used. Next, let $p=\frac{2d}{d-2}$, then Sobolev embedding gives
    \[
    \left(\int_{D(x_0,r)}|v_m^\ell\varphi|^p\,dx\right)^{\frac{2}{p}}\leq C\left(1+\frac{1}{k^2}\right) \int_{D(x_0,r)}(v_m^\ell|\nabla \varphi|)^2\,dx.
    \]

    Next, we use the iteration argument. To handle this, let  $0<s<t<r$, and select $\varphi\in C_0^\infty(B(x_0,r))$, such that $\varphi\equiv1$ in $B(x_0,s)$ and $\varphi\equiv0$ outside $B(x_0,t)$, then
    \[
    \left(\int_{D(x_0,s)}v_m^{\ell p}\,dx\right)^{\frac{1}{\ell p}}\leq \left[C\left(1+\frac{1}{k^2}\right)^{\frac{1}{2}}\frac{1}{t-s}\right]^{\frac{1}{\ell}}
    \left(\int_{D(x_0,t)}v_m^{2\ell}\,dx\right)^{\frac{1}{2\ell}}.
    \]
    Let
    \[
    \gamma:=\frac{d}{d-2}>1.
    \]
    We have proved that for any $q\geq 2$,
    \[
    \left(\int_{D(x_0,s)}v_m^{\gamma q}\,dx\right)^{\frac{1}{\gamma q}}\leq\left[\frac{Cq}{t-s}\right]^{\frac{2}{q}} \left(\int_{D(x_0,t)}v_m^{q}\,dx\right)^{\frac{1}{q}}.
    \]
    This estimate suggests that we iterate, beginning with $q=2,$ as $2,2\gamma, 2\gamma^2,\cdots$. Now set for $i=0,1,2,\cdots$,
    $$
    q_i=2\gamma^i~~~and~~~~r_i=\frac{r}{2}+\frac{r}{2^{i+1}}.
    $$
    Notice that $q_i=\gamma q_{i-1}$ and $r_i-r_{i+1}=\frac{r}{2^{i+2}}$. Thus
    \begin{equation}\label{3.2}
        \left(\int_{D(x_0,r_{i+1})}v_m^{2\gamma^{i+1}}\,dx\right)^{\frac{1}{2\gamma^{i+1}}} \leq\left(\frac{C\gamma^i}{r_i-r_{i+1}}\right)^{\frac{1}{\gamma^i}} \left(\int_{D(x_0,r_{i})}v_m^{2\gamma^i }\,dx\right)^{\frac{1}{2\gamma^i }}.
    \end{equation}
    By iterating \eqref{3.2}
    we obtain
    \begin{equation}
       \left(\int_{D(x_0,r_{i+1})}v_m^{2\gamma^{i+1}}\,dx\right)^{\frac{1}{2\gamma^{i+1}}} \leq \left(\frac{C}{r}\right)^{\sum\limits_{j=0}^{i+1}\frac{1}{\gamma^j}}
        (2\gamma)^{\sum\limits_{j=0}^{i+1}\frac{j}{\gamma^j}} \left(\int_{D(x_0,r)}v_m^{2}\,dx\right)^{\frac{1}{2}},
    \end{equation}
    that is,
    \begin{equation}\label{3.3}
       \left(\int_{D(x_0,r_{i+1})}v_m^{2\gamma^{i+1}}\,dx\right)^{\frac{1}{2\gamma^{i+1}}} \leq \left(\frac{C}{r}\right)^{\frac{d}{2}}
        \left(\frac{2d}{d-2}\right)^{\frac{d(d-2)}{4}} \left(\int_{D(x_0,r)}v_m^{2}\,dx\right)^{\frac{1}{2}}.
    \end{equation}
    Finally taking $i\to\infty$ in \eqref{3.3} yields that
    \begin{equation}\label{3.4}
        \sup\limits_{D(x_0,\frac{r}{2})}v_m\leq C\left(\fint_{D(x_0,r)}v_m^2\,dx\right)^{\frac{1}{2}}.
    \end{equation}
    Let $m\to\infty$ in \eqref{3.4}, and we obtain
    \begin{equation}\label{3.5}
     \sup\limits_{D(x_0,\frac{r}{2})}u^+\leq C\left(\fint_{D(x_0,r)}|u|^2\,dx\right)^{\frac{1}{2}}
    \end{equation}
    where $u^+$ denotes the positive part of $u$. By applying \eqref{3.5} to $-u$, we obtain a similar inequality for $u^-$, the negative part of $u$. With Lemma \ref{Fefferman-Phong} as well as Lemma \ref{lemma-function m} at disposal, we complete the proof.
\end{proof}

\begin{remark}
  Lemma \ref{pointwise estimate} is also valid if $\frac{\partial u}{\partial \nu} =0$ on $\Delta(x_0,r)$ is replaced by $u=0$ on $\Delta(x_0,r)$.
\end{remark}

Next, we give some estimates of the fundamental solution. Let $\Gamma(x,y)$ denote the fundamental solution of the Schr\"odinger operator  $-\text{div}(A \nabla )+ V$ in $\mathbb{R}^d$ and $\widetilde{\Gamma}(x,y)$ denote the fundamental solution of the elliptic operator  $-\text{div}(A \nabla )$ in $\mathbb{R}^d$. Note that
$$0\leq\Gamma(x,y)\leq\widetilde{\Gamma}(x,y)\leq\frac{C}{|x-y|^{d-2}}\quad\text{ for all }x\neq y.$$
In view of the interior Lipschtiz estimates \cite{Dong-2012} for second order elliptic equation $-\text{div}(A \nabla u)=0$ in $B(x,2r)\subset \mathbb{R}^d$ with coefficients satisfying \eqref{ellipticity}-\eqref{symmetric} and \eqref{Dini}, we know that for any $z\in B(x,r)$,
$$
|\nabla u(z)|\leq \frac{C}{r}\fint_{B(x,2r)}|u(y)|dy.
$$
This, together with Lemma \ref{lemma-function m}, Lemma \ref{pointwise estimate} and the fact $0\leq V(x)\leq Cm(x,V)^2$, leads to the estimate of fundamental solution, as follows.
\begin{prop}\label{estimate of fundamental solution-1}
For any $x,y \in \mathbb{R}^d$,
\[
|\Gamma(x,y)|  \leq \frac{C_{k}}{(1+|x-y| m(x, V))^{k}} \cdot \frac{1}{|x-y|^{d-2}},
\]
and
\[
\left|\nabla_x \Gamma(x,y)\right|  \leq \frac{C_{k}}{(1+|x-y| m(x, V))^{k}} \cdot \frac{1}{|x-y|^{d-1}}
\]
where $k \geq 1$ is an arbitrary integer.
\end{prop}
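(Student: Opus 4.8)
The plan is to fix $x\neq y$, write $R=|x-y|$, and reduce everything to producing, for each integer $k\geq1$, bounds of the form $C_kR^{2-d}(1+Rm(x,V))^{-k}$ for $|\Gamma(x,y)|$ and $C_kR^{1-d}(1+Rm(x,V))^{-k}$ for $|\nabla_x\Gamma(x,y)|$; since $k$ is arbitrary, any loss of a fixed polynomial power of $1+Rm(x,V)$ is harmless, and this flexibility will be used decisively. Every ball centered at $x$ that I invoke will have radius strictly below $R$, so it avoids the pole $y$ and $u:=\Gamma(\cdot,y)$ is a genuine weak solution of $\mathcal{L}u+Vu=0$ on it. The starting ingredient is the crude bound $0\leq\Gamma(\cdot,y)\leq\widetilde{\Gamma}(\cdot,y)\leq C|\cdot-y|^{2-d}$ recalled above: for $z\in B(x,R/2)$ one has $|z-y|\geq R/2$, so $|\Gamma(z,y)|\leq CR^{2-d}$ there, and in particular $\big(\fint_{B(x,R/2)}|\Gamma(\cdot,y)|^2\big)^{1/2}\leq CR^{2-d}$.

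For the size estimate I would apply the interior case of Lemma \ref{pointwise estimate} to $u=\Gamma(\cdot,y)$ on $B(x,R/2)$ (the Moser iteration in its proof needs no boundary), which gives, for every $k\geq1$,
\[
\sup_{B(x,R/4)}|\Gamma(\cdot,y)|\ \leq\ \frac{C_k}{\big(1+\tfrac R2 m(x,V)\big)^{k}}\Big(\fint_{B(x,R/2)}|\Gamma(\cdot,y)|^2\Big)^{1/2}\ \leq\ \frac{C_k\,R^{2-d}}{\big(1+Rm(x,V)\big)^{k}},
\]
using the previous bound and $1+\tfrac R2 m(x,V)\geq\tfrac12(1+Rm(x,V))$. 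Evaluating at $x$ is the first inequality of the proposition.

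For the gradient, I would first control the local averages of $V$: combining $V(z)\leq Cm(z,V)^2$ from Proposition \ref{upper-bound-V} with the consequence $m(z,V)\leq Cm(x,V)(1+|z-x|m(x,V))^{k_0}$ of \eqref{estimate-m2}, one obtains for $0<r\leq R$
\[
\psi(x,r)=\frac1{r^{d-2}}\int_{B(x,r)}V\ \leq\ C\,r^{2}m(x,V)^{2}\big(1+rm(x,V)\big)^{2k_0}\ \leq\ C\big(1+Rm(x,V)\big)^{2k_0+2},
\]
and hence, by the $B_\infty$ property \eqref{V}, $\|V\|_{L^\infty(B(x,r))}\leq C\fint_{B(x,r)}V\leq Cr^{-2}(1+Rm(x,V))^{2k_0+2}$. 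Now $u=\Gamma(\cdot,y)$ solves $-\text{div}(A\nabla u)=-Vu$ on $B(x,R/4)$ with $Vu\in L^\infty$, so the interior Lipschitz estimate for partially Dini continuous $A$ (\cite{Dong-2012}; its homogeneous form is recalled above, and its inhomogeneous counterpart for $-\text{div}(A\nabla u)=g$, $g\in L^\infty$, reads $|\nabla u(x)|\leq\frac Cr\fint_{B(x,2r)}|u|+Cr\|g\|_{L^\infty(B(x,2r))}$) applied with $2r=R/4$ and $g=-Vu$, together with the bound on $\|V\|_{L^\infty(B(x,R/4))}$ and the size estimate just proved, yields
\[
|\nabla_x\Gamma(x,y)|\ \leq\ \frac{C}{R}\Big(1+\big(1+Rm(x,V)\big)^{2k_0+2}\Big)\sup_{B(x,R/4)}|\Gamma(\cdot,y)|\ \leq\ \frac{C_k\,R^{1-d}}{\big(1+Rm(x,V)\big)^{k-2k_0-2}}.
\]
Replacing $k$ by $k+2k_0+2$ gives the second inequality.

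I expect the main obstacle to be twofold. The minor-but-fussy part is the bookkeeping of radii: Lemma \ref{pointwise estimate} and the interior estimate must be applied on balls centered at $x$ of radius $<R$, which forces the choices $R/2$ and $R/4$ and the repeated use of $1+\rho\, m(x,V)\sim 1+Rm(x,V)$. The real difficulty is the gradient estimate: reducing it to an \emph{inhomogeneous} elliptic estimate produces the factor $\|V\|_{L^\infty(B(x,r))}$, which at the relevant scale $r\sim R$ has no a priori control by $m(x,V)$; it is exactly the bound $V\leq Cm(\cdot,V)^2$ together with the polynomial comparison estimates for $m(\cdot,V)$ in Lemma \ref{lemma-function m} that shows the cost is only a fixed power of $1+Rm(x,V)$, which is why the arbitrariness of $k$ has to be exploited.
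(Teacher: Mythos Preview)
Your proof is correct and follows essentially the same route as the paper: the size bound comes from the interior case of Lemma~\ref{pointwise estimate} applied to $u=\Gamma(\cdot,y)$ on a ball avoiding the pole together with the crude bound $\Gamma\leq\widetilde\Gamma\leq C|x-y|^{2-d}$, and the gradient bound comes from the interior Lipschitz estimate of \cite{Dong-2012} combined with $V\leq Cm(\cdot,V)^2$ and Lemma~\ref{lemma-function m}, absorbing the polynomial loss in $1+Rm(x,V)$ by the arbitrariness of $k$. The paper only sketches these ingredients, so your careful bookkeeping of radii and your explicit use of the inhomogeneous Lipschitz estimate (with the $\|V\|_{L^\infty}$ control via $B_\infty$) are a welcome elaboration rather than a different method.
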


\begin{cor}
  For $x,y\in\mathbb{R}^d$ and $|x-y|\leq \frac{2}{m(x,V)}$,
  \begin{equation}\label{f-solution-1}
\aligned
\left|\Gamma(x,y)-\widetilde{\Gamma}(x,y)\right| \leq
 \begin{cases}
   C m(x, V),\quad  &\text { if } d=3,\\[0.2em]
   C m(x, V)^{2}\log\frac{C}{|x-y|m(x,V)},\quad  &\text { if } d=4,\\[0.2em]
   C m(x, V)^{2}|x-y|^{4-d},\quad  &\text { if } d>4,
 \end{cases}
\endaligned
 \end{equation}
 and
 \begin{equation}\label{f-solution-2}
 \aligned
\left|\nabla_{x} \Gamma(x,y)-\nabla_{x} \widetilde{\Gamma}(x, y)\right| \leq
 \begin{cases}
   C m(x, V)^{2}\log\frac{C}{|x-y|m(x,V)},\quad  &\text { if } d=3,\\[0.2em]
   C m(x, V)^{2}|x-y|^{3-d}, \quad &\text { if } d>3.
 \end{cases}
\endaligned
\end{equation}
\end{cor}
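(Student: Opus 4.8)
The plan is to derive both estimates from the perturbation (second resolvent) identity relating $\Gamma$ to $\widetilde\Gamma$, combined with Proposition~\ref{estimate of fundamental solution-1}, the bound $0\le V\le Cm(\cdot,V)^2$ of Proposition~\ref{upper-bound-V}, and the comparison estimates \eqref{estimate-m1}--\eqref{estimate-m2}. Writing $\mathcal L=-\text{div}(A\nabla)$, the function $u:=\Gamma(\cdot,y)-\widetilde\Gamma(\cdot,y)$ solves $(\mathcal L+V)u=-V\widetilde\Gamma(\cdot,y)$ in $\R^d$; since $d\ge3$ and, by Proposition~\ref{estimate of fundamental solution-1} together with $V\le Cm(\cdot,V)^2$ and \eqref{estimate-m2}, the right-hand side decays fast at infinity, $u$ is the unique decaying solution, so
\begin{equation}\label{plan-pert}
\Gamma(x,y)-\widetilde\Gamma(x,y)=-\int_{\R^d}\Gamma(x,z)\,V(z)\,\widetilde\Gamma(z,y)\,dz .
\end{equation}
The decisive feature of \eqref{plan-pert} is that the $m(x,V)$-decay sits on the $x$-slot, which is exactly the decay appearing in \eqref{f-solution-1}--\eqref{f-solution-2}. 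The integrand is $O(|x-z|^{2-d})$ near $z=x$ and $O(|z-y|^{2-d})$ near $z=y$, hence locally integrable, and decays rapidly at infinity, so we may differentiate \eqref{plan-pert} in $x$ to obtain $\nabla_x\Gamma(x,y)-\nabla_x\widetilde\Gamma(x,y)=-\int_{\R^d}\nabla_x\Gamma(x,z)\,V(z)\,\widetilde\Gamma(z,y)\,dz$, the $O(|x-z|^{1-d})$ singularity at $z=x$ still being integrable.

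I would next record an elementary potential bound: from $0\le V\le Cm(\cdot,V)^2$, a dyadic decomposition of $B(x,r)$, the scaling $\psi(x,r)\le C(r/R)^{2}\psi(x,R)$ for $r\le R$ (Proposition~\ref{upper-bound-V} with $q=\infty$) and $\psi(x,1/m(x,V))=1$ (Proposition~\ref{prop2.6}), one gets, for $r\lesssim 1/m(x,V)$,
\[
\int_{B(x,r)}\frac{V(z)}{|x-z|^{d-2}}\,dz\le C\,r^{2}m(x,V)^{2},\qquad
\int_{B(x,r)}\frac{V(z)}{|x-z|^{d-1}}\,dz\le C\,r\,m(x,V)^{2}.
\]
Now fix $x\neq y$ with $|x-y|\le 2/m(x,V)$ and split the $z$-integral in \eqref{plan-pert} over $R_1=\{|z-x|<|x-y|/2\}$, $R_2=\{|z-y|<|x-y|/2\}$ and $R_3=\R^d\setminus(R_1\cup R_2)$. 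On $R_1$, $|z-y|\sim|x-y|$, so using $\Gamma(x,z)\le C|x-z|^{2-d}$, $\widetilde\Gamma(z,y)\le C|x-y|^{2-d}$ and the first potential bound gives a contribution $\le Cm(x,V)^2|x-y|^{4-d}$. On $R_2$, $|x-z|\sim|x-y|$ and $m(y,V)\sim m(x,V)$ by \eqref{estimate-m1}, so the potential bound centred at $y$ again gives $\le Cm(x,V)^2|x-y|^{4-d}$. For $d\ge 5$ this is exactly the bound in \eqref{f-solution-1}; for $d=4$ it is $\le Cm(x,V)^2$ and for $d=3$ it is $\le Cm(x,V)$ after using $|x-y|\le 2/m(x,V)$, so in every case it is controlled by \eqref{f-solution-1}.

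The delicate term is $R_3$, where $|x-z|\sim|z-y|\gtrsim|x-y|$. Using $\Gamma(x,z)\le C_k(1+|x-z|m(x,V))^{-k}|x-z|^{2-d}$, $\widetilde\Gamma(z,y)\le C|x-z|^{2-d}$, a dyadic decomposition in $|x-z|\sim 2^j$ with $2^j\gtrsim|x-y|$, and the bound $\int_{B(x,2^j)}V\le C\,m(x,V)^2(1+2^jm(x,V))^{2k_0}\,2^{jd}$ (from $V\le Cm(\cdot,V)^2$ and \eqref{estimate-m2}), the $R_3$-contribution is at most
\[
C\,m(x,V)^{2}\sum_{2^j\gtrsim|x-y|}\frac{2^{\,j(4-d)}}{\bigl(1+2^jm(x,V)\bigr)^{k-2k_0}},
\]
with $k$ at our disposal. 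Splitting the sum at $2^j\sim 1/m(x,V)$: on $|x-y|\lesssim 2^j\lesssim 1/m(x,V)$ the denominator is $\sim1$, so the geometric series in $2^{j(4-d)}$ contributes $\sim|x-y|^{4-d}$ when $d>4$, $\sim\log\frac{1}{|x-y|m(x,V)}$ when $d=4$, and $\sim 1/m(x,V)$ when $d=3$; on $2^j\gtrsim 1/m(x,V)$, taking $k$ large makes the remaining rapidly decaying series sum to $\sim m(x,V)^{d-4}$. Multiplying by $m(x,V)^2$ reproduces precisely the three cases of \eqref{f-solution-1}, the lower-order pieces again absorbed via $|x-y|\le 2/m(x,V)$.

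Finally, \eqref{f-solution-2} follows by the same three-region argument applied to the differentiated identity, now with $|\nabla_x\Gamma(x,z)|\le C_k(1+|x-z|m(x,V))^{-k}|x-z|^{1-d}$ and the second potential bound; this replaces the exponent $4-d$ everywhere by $3-d$, so the borderline logarithmic case becomes $d=3$, matching \eqref{f-solution-2}. I expect the $R_3$ estimate to be the main obstacle: one must choose $k$ large enough, sum the geometric and arithmetic series carefully across $d=3,4,\ge 5$ so that the cut-off at the scale $1/m(x,V)$ produces a logarithm precisely in the borderline dimension, and justify convergence of the $z$-integrals and of the differentiation under the integral sign — all of which again rely on Proposition~\ref{estimate of fundamental solution-1}, Proposition~\ref{upper-bound-V}, and \eqref{estimate-m2}.
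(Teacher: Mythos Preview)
Your argument is correct and is exactly the standard route: the perturbation identity $\Gamma-\widetilde\Gamma=-\int\Gamma\,V\,\widetilde\Gamma$, the near/near/far decomposition, the dyadic potential bounds coming from $\psi(x,r)\le C(r\,m(x,V))^2$, and the use of the extra $(1+|x-z|m(x,V))^{-k}$ decay on the far region. The paper does not supply a proof for this corollary; it is stated as an immediate consequence of Proposition~\ref{estimate of fundamental solution-1} (and implicitly of the argument in \cite{Shen-1994-IUMJ,Shen-1995}), so your write-up simply fills in what the paper leaves to the reader.
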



For $R>0$ sufficiently large, set
$$\Omega_R=\{(x',x_d)\in\R^d: |x'|<R\text{ and }\psi(x')<x_d<\psi(x')+R\}. $$
For $x\in \Omega_R$, let $v_R^x(y)$ be the solution of
\begin{align*}
  \begin{cases}
    \mathcal{L} u+V u =0\quad\text{in }\Omega_R,\\[.2em]
    \displaystyle\frac{\partial u}{\partial \nu}=\frac{\partial \Gamma(x,y)}{\partial \nu_y}\quad\text{on }\partial\Omega_R.
  \end{cases}
\end{align*}
Define $N_R(x,y)$ the Neumann function for the Schr\"odinger equation  $\mathcal{L} u+V u =0$ in $\Omega_R$ as
$$N_R(x,y)=\Gamma(x,y)-v_R^x(y),\quad\text{for }x,y\in\Omega_R.$$


 \begin{prop}\label{Nr-estimate}
For any $x,y \in \Omega_R$,
\begin{equation}
  |N_R (x,y)|  \leq \frac{C_{k}}{(1+|x-y| m(y, V))^{k}} \cdot \frac{1}{|x-y|^{d-2}},
\end{equation}
where $k \geq 1$ is an arbitrary integer and $C_{k}$ is independent of $R$.
\end{prop}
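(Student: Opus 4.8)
The plan is to write $N_R(x,y)=\Gamma(x,y)-v_R^x(y)$, establish a crude size bound $|N_R(x,y)|\le C|x-y|^{2-d}$ with $C$ independent of $R$, and then upgrade it to the stated decay by a single application of the boundary $L^\infty$ estimate of Lemma~\ref{pointwise estimate}. First I would record two elementary facts. Since $(\mathcal L+V)N_R(x,\cdot)=\delta_x$ in $\Omega_R$ and $N_R(x,\cdot)$ has vanishing conormal derivative on all of $\partial\Omega_R$ (both inherited from $\Gamma$ and $v_R^x$), testing the weak formulation against $\phi\equiv1$ (after the usual approximation of $\delta_x$) gives the scale-invariant identity $\int_{\Omega_R}V(z)\,N_R(x,z)\,dz=1$. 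Moreover $N_R(x,\cdot)\ge0$: since $N_R(x,z)\to+\infty$ as $z\to x$ — because $\Gamma(x,z)\ge c\,|x-z|^{2-d}$ near $z=x$ by \eqref{f-solution-1} and the standard lower bound $\widetilde\Gamma(x,z)\ge c\,|x-z|^{2-d}$, while $v_R^x$ is a local solution near $x$ and hence locally bounded — the negative part $(N_R(x,\cdot))^-$ lies in $H^1(\Omega_R)$ and vanishes near $x$, so testing against $-(N_R(x,\cdot))^-$ and using $V>0$ a.e.\ together with \eqref{ellipticity} forces $(N_R(x,\cdot))^-\equiv0$.

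The technical heart is the crude bound
$$0\le N_R(x,y)\le\frac{C}{|x-y|^{d-2}},\qquad x,y\in\Omega_R,$$
with $C$ depending only on $\mu$, $d$ and the Lipschitz character of $\Omega$, and not on $R$. This is a Gr\"uter--Widman type size estimate for the Neumann function: away from the pole $N_R(x,\cdot)$ solves $(\mathcal L+V)u=0$ with vanishing conormal derivative on all of $\partial\Omega_R$, so the Caccioppoli inequality on dyadic shells $\{2^{j}\le|z-x|\le2^{j+1}\}$ carries no boundary term, and combining it with the Sobolev inequality, the local boundedness of Lemma~\ref{pointwise estimate}, and the De Giorgi--Nash estimate of Theorem~\ref{Holer-estimate} (the last two in their $\Omega_R$-analogues, valid up to the Lipschitz Neumann boundary after the usual flattening) yields the bound by iterating outward from a fixed neighbourhood of $x$, where $\Gamma(x,z)\le C|x-z|^{2-d}$ (Proposition~\ref{estimate of fundamental solution-1}) controls the pole. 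Alternatively, the identity $\int_{\Omega_R}V\,N_R(x,\cdot)=1$ together with the $A_\infty$ property of $V$ (Lemma~\ref{A-1}) and Lemma~\ref{Fefferman-Phong} can be used. In either route the essential observation is that every inequality invoked is purely local, with constants governed only by the Lipschitz character, so $C$ is independent of $R$.

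Granting the crude bound, the decay is obtained in a single step. Fix $x,y\in\Omega_R$, put $\delta=|x-y|$, and apply the $\Omega_R$-analogue of Lemma~\ref{pointwise estimate}, centred at $y$ with radius $\delta/2$, to the function $z\mapsto N_R(x,z)$: since $x\notin B(y,\delta/2)$ this solves $(\mathcal L+V)(\,\cdot\,)=0$ with vanishing conormal derivative in $D(y,\delta/2)$, and for $z\in D(y,\delta/2)$ one has $|x-z|\ge\delta/2$, hence $|N_R(x,z)|\le C\delta^{2-d}$ by the crude bound. Since $1+\tfrac12\delta\,m(y,V)\sim1+\delta\,m(y,V)$, Lemma~\ref{pointwise estimate} gives
$$|N_R(x,y)|\le\frac{C_k}{(1+|x-y|\,m(y,V))^{k}}\cdot\frac{1}{|x-y|^{d-2}}$$
for every integer $k\ge1$, which is the assertion, with $C_k$ independent of $R$ because the crude bound and the constants in Lemma~\ref{pointwise estimate} are. (If one had first produced the decay factor $(1+\delta\,m(x,V))^{-k}$ instead, one would pass to $m(y,V)$ using $1+\delta\,m(x,V)\ge c(1+\delta\,m(y,V))^{1/(k_0+1)}$ from \eqref{estimate-m3} and the arbitrariness of $k$.)

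I expect the main obstacle to be precisely the crude bound with an $R$-free constant: one must check that the Gr\"uter--Widman loop — or the argument through $\int_{\Omega_R}V\,N_R(x,\cdot)=1$ — is run with local estimates only, and one must control $N_R(x,\cdot)$ in the transition region between the scales $|x-y|$ and $1/m(x,V)$, where neither the near-pole comparison with $\Gamma$ nor the decay of $\Gamma$ far away is, on its own, decisive.
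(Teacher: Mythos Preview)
Your approach is correct and matches the paper's. The paper's proof is terse---it merely cites Lemmas~\ref{lemma-function m}, \ref{Fefferman-Phong}, \ref{pointwise estimate} together with ``a duality argument''---but the two-step structure you spell out (an $R$-independent crude bound $|N_R(x,y)|\le C|x-y|^{2-d}$ obtained by a Gr\"uter--Widman/duality argument, followed by a single application of Lemma~\ref{pointwise estimate} to $z\mapsto N_R(x,z)$ on $D(y,|x-y|/2)$ to insert the decay factor $(1+|x-y|\,m(y,V))^{-k}$) is precisely what those citations encode, and your identification of the crude bound as the genuine technical step is on target.
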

\begin{proof}
  This directly follows from Lemma \ref{lemma-function m}, Lemma \ref{Fefferman-Phong} and Lemma \ref{pointwise estimate} as well as a duality argument.
\end{proof}

We are ready to prove Theorem \ref{Holer-estimate}.

\begin{proof}[Proof of Theorem \ref{Holer-estimate}]
We follow the same line of argument as in \cite{Shen-1999}. Let $B=B(x_{0},R)$. We may assume that $\sup _{B}|u|<\infty$.
Let $v(x)=u(x)+\int_B\widetilde{\Gamma}(x,z)V(z)u(z)\,dz$. 
It can be verified that $v\in W^{1,2}(B)$ and $\mathcal{L}v=0$ in $B$. Then by the classical De Giorgi-Nash estimate for $x, y \in B(x_{0}, R / 2)$,
\begin{equation}
  \begin{aligned}
    |v(x)-v(y)|&\leq C\left(\frac{|x-y|}{R}\right)^{\alpha_1} \sup _{B}|v|\\
    &\leq C\left(\frac{|x-y|}{R}\right)^{\alpha_1} \sup _{B}|u|\left\{1+\sup_{x\in B}\int_B\frac{V(z)}{|x-z|^{d-2}}\,dz\right\}\\
    &\leq C\left(\frac{|x-y|}{R}\right)^{\alpha_1} \sup _{B}|u|\left\{1+R^{2}\fint_{B}V\,dy\right\}
  \end{aligned}
\end{equation}
where $\alpha_1\in(0,1)$.
Now, let $r=|x-y|<R$. By Proposition \ref{estimate of fundamental solution-1},
\begin{align*}
  &\left|\int_B(\widetilde{\Gamma}(x,z)-\widetilde{\Gamma}(y,z))V(z)u(z)\,dz\right|
  \leq C\sup _{B}|u|\int_B\left|\frac{V(z)}{|x-z|^{d-2}}- \frac{V(z)}{|z-y|^{d-2}}\right|\,dz.
\end{align*}
Let $B_1=\{z\in B:|z-x|\leq2r\text{ or }|z-y|\leq2r\}$ and $B_2=\{z\in B:|z-x|>2r,|z-y|>2r\}$.
A direct computation leads that there exits $\alpha_2\in(0,1)$ such that
\begin{align*}
  I_1:&=\int_{B_1} \left|\frac{V(z)}{|x-z|^{d-2}}- \frac{V(z)}{|z-y|^{d-2}}\right|\,dz\leq C\int_{B(x,3r)} \frac{V(z)}{|x-z|^{d-2}}\,dz+ C\int_{B(y,3r)} \frac{V(z)}{|z-y|^{d-2}}\,dz\\
  &\leq C\left(\frac{|x-y|}{R}\right)^{\alpha_2}\left\{1+R^{2}\fint_{B}V\,dy\right\},
\end{align*}
and by Proposition \ref{upper-bound-V},
\begin{align*}
I_2:&=\int_{B_2} \left|\frac{V(z)}{|x-z|^{d-2}}- \frac{V(z)}{|z-y|^{d-2}}\right|\,dz
\leq Cr\int_{r<|z-x|<2R}\frac{V(z)}{|x-z|^{d-1}}dz\\&
 \leq Cr\int_{r}^{2R}t^{-d}\int_{B(x,t)}V(z)dz dt\leq Cr\int_{r}^{2R}\left(\frac tR\right)^{\alpha_2} t^{-2}dt\left( R^{2}\fint_{B(x,2R)}V(z)dz\right)\\&
 \leq C\left(\frac{|x-y|}{R}\right)^{\alpha_2} \left\{1+R^{2}\fint_{B}V\,dz\right\}.
\end{align*}
Taking $\alpha=\min\{\alpha_1,\alpha_2\}$, we obtain that for $x, y \in B(x_{0}, R / 2)$, there exists $k_1>0$ such that
\begin{equation}\label{Holer-estimate-1}
\begin{aligned}
  |u(x)-u(y)| &\leqslant C\left(\frac{|x-y|}{R}\right)^{\alpha} \left\{1+R^{2}\fint_{B}V\,dz\right\}\sup _{B}|u|\\
  &\leqslant C\left(\frac{|x-y|}{R}\right)^{\alpha} \{1+Rm(x_0,V)\}^{k_1}\sup _{B}|u|
\end{aligned}
\end{equation}
where Proposition \ref{prop2.6} was used. It suffices for us to show
\begin{equation}\label{3.10}
  \left\{1+R m\left(x_{0}, V\right)\right\}^{k_{1}} \sup _{B}|u| \leqslant C \sup _{B(x_{0}, 2R)}|u|.
\end{equation}

To this end, let $\varphi \in C_{0}^{\infty}\left(B\left(x_{0}, 5 R / 3\right)\right)$ such that $0 \leqslant \varphi \leqslant 1, \varphi=1$ on $B\left(x_{0}, 4 R / 3\right)$, and $|\nabla \varphi| \leqslant C R^{-1},\left|\nabla^{2} \varphi\right| \leqslant CR^{-2}$. Since
$$
(\mathcal{L}+V)(u \varphi)=-2 \langle A\nabla u,\nabla \varphi\rangle+u \mathcal{L} \varphi \quad \text { in } B(x_{0}, 2R),
$$
by H\"older's inequality and Caccioppoli's inequality, for any $x \in B$
$$
\begin{aligned}
 |u(x)| \leqslant &\int_{\mathbb{R}^{d}} \left|\Gamma(x, y)\right|\left|-2 \langle A\nabla u,\nabla \varphi\rangle+u \mathcal{L} \varphi\right| d y\\
 \leqslant& \frac{C}{R} \int_{4 R / 3 \leqslant\left|y-x_{0}\right| \leqslant 5 R / 3} \left|\Gamma(x, y)\right||\nabla u(y)| d y+\frac{C}{R^{2}} \int_{4 R / 3 \leqslant\left|y-x_{0}\right| \leqslant 5 R / 3} \left|\Gamma(x, y)\right||u(y)| d y \\
 \leqslant & \frac{C}{R^{2}}\left\{\int_{4 R / 3 \leqslant\left|y-x_{0}\right| \leqslant 5 R / 3}\left|\Gamma(x, y)\right|^{2} d y\right\}^{1 / 2}\left\{\int_{B\left(x_{0}, 2R\right)}|u(y)|^{2} d y\right\}^{1 / 2}.
\end{aligned}
$$
Using Proposition \ref{estimate of fundamental solution-1}, we obtain that  for $x \in B$ and $k>0$
$$
|u(x)| \leqslant \frac{C}{\left\{1+R m\left(x_{0}, V\right)\right\}^{k}} \sup _{B\left(x_{0}, 2R\right)}|u|.
$$
This, together with \eqref{Holer-estimate-1}, completes the proof.
\end{proof}

\section{\bf Rellich estimates}\label{section-4}

In this section we reduce the solvability of the $L^2$
regularity problems for $\mathcal{L}u+Vu=0$ in Lipschitz graph domains to certain boundary Rellich
estimates.

Let $w_+$ and $w_-$ denote its nontangential limits on $\partial \Omega$, taken from inside $\Omega$ and outside $\overline{\Omega}$, respectively.
\begin{thm}\label{Rellich-lemma-5.5}
Suppose $A$ satisfies \eqref{ellipticity}-\eqref{independent} and $V$ satisfies \eqref{V}. Also assume that $u$ is a weak solution to $\mathcal{L} u+Vu=0$ in $\Omega$ with $(\nabla u)^{*} \in$ $L^{2}(\partial \Omega)$ and $|u(x)|+|x||\nabla u(x)|=$ $O\left(|x|^{2-d}\right)$ as $|x| \to \infty$.  Then
\begin{equation}\label{Rellich-identity-5.5}
  \int_{\partial \Omega}\left|\left(\frac{\partial u}{\partial \nu}\right)_\pm\right|^{2} d\sigma\sim \int_{\partial \Omega}\left|\left(\nabla_{tan} u\right)_\pm\right|^{2} d\sigma+\int_{\partial\Omega}|u_\pm|^2m(x, V)^2\,d\sigma.
\end{equation}
\end{thm}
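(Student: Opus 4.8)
The plan is to derive \eqref{Rellich-identity-5.5} from a Rellich--Ne\v{c}as identity built on the constant transversal field $e_{d}$: since $\Omega$ lies on one side of a Lipschitz graph, the outer unit normal satisfies $|n\cdot e_{d}|=|n_{d}|\ge c>0$ uniformly on $\partial\Omega$, and this lower bound is what drives the usual absorption argument. I would first establish the required identities on the truncated domains $\Omega_{R}$, using the $R$-uniform Neumann-function bounds of Proposition~\ref{Nr-estimate}, and then let $R\to\infty$; the hypotheses $(\nabla u)^{*}\in L^{2}(\partial\Omega)$ and $|u|+|x||\nabla u|=O(|x|^{2-d})$, together with $d\ge3$ and (for the single-layer solutions to which the theorem is applied) the sharper decay of Proposition~\ref{estimate of fundamental solution-1}, make every integral below absolutely convergent and kill the contribution of $\partial B_{R}$ in the limit. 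It suffices to treat the interior trace ``$+$''; the exterior trace ``$-$'' is handled identically, since $\mathbb{R}^{d}\setminus\overline{\Omega}$ is again the region on one side of the Lipschitz graph $\Psi$, the relevant $u$ solves $\mathcal{L}u+Vu=0$ there as well, and $u_{+}=u_{-}$ with $\nabla_{tan}u$ continuous across $\partial\Omega$, so that only the conormal derivative jumps.

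Concretely, for a weak solution of $\mathcal{L}u+Vu=0$, i.e.\ $\mathrm{div}(A\nabla u)=Vu$, I would use the energy identity
$$\int_{\Omega}\bigl(\langle A\nabla u,\nabla u\rangle+Vu^{2}\bigr)\,dx=\int_{\partial\Omega}u\,\frac{\partial u}{\partial\nu}\,d\sigma,$$
and, by integrating $\mathrm{div}\bigl(e_{d}\langle A\nabla u,\nabla u\rangle-2(\partial_{d}u)A\nabla u\bigr)=-2(\partial_{d}u)Vu$ (where \eqref{symmetric} and the $x_{d}$-independence \eqref{independent} enter through $\partial_{d}\langle A\nabla u,\nabla u\rangle=2\langle A\nabla(\partial_{d}u),\nabla u\rangle$), the Rellich identity
$$\int_{\partial\Omega}\Bigl[n_{d}\,\langle A\nabla u,\nabla u\rangle-2(\partial_{d}u)\,\frac{\partial u}{\partial\nu}\Bigr]\,d\sigma=-2\int_{\Omega}(\partial_{d}u)\,Vu\,dx.$$
On $\partial\Omega$ I would split $\nabla u$ into conormal and tangential parts: by \eqref{ellipticity}, $\langle A\nabla u,\nabla u\rangle\sim|\tfrac{\partial u}{\partial\nu}|^{2}+|\nabla_{tan}u|^{2}$ and $\partial_{d}u=\beta(x)\,n_{d}\,\tfrac{\partial u}{\partial\nu}+O(|\nabla_{tan}u|)$ with $\beta\sim1$, so the diagonal term $n_{d}|\tfrac{\partial u}{\partial\nu}|^{2}$ genuinely occurs in $2(\partial_{d}u)\tfrac{\partial u}{\partial\nu}$. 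Inserting this and bounding the cross terms $\lesssim|\nabla_{tan}u|\,|\tfrac{\partial u}{\partial\nu}|$ by Cauchy's inequality reduces the whole statement to one key estimate: for every $\varepsilon>0$,
$$\Bigl|\int_{\Omega}(\partial_{d}u)\,Vu\,dx\Bigr|\le\varepsilon\int_{\partial\Omega}\Bigl|\frac{\partial u}{\partial\nu}\Bigr|^{2}d\sigma+C_{\varepsilon}\int_{\partial\Omega}\bigl(|\nabla_{tan}u|^{2}+|u|^{2}m(x,V)^{2}\bigr)\,d\sigma.\qquad(\ast)$$
Granting $(\ast)$, absorbing the $\varepsilon$-term (possible because $|n_{d}|\ge c$) gives $\int_{\partial\Omega}|\tfrac{\partial u}{\partial\nu}|^{2}\lesssim\int_{\partial\Omega}(|\nabla_{tan}u|^{2}+|u|^{2}m^{2})$; reading the Rellich identity the other way gives $\int_{\partial\Omega}|\nabla_{tan}u|^{2}\lesssim\int_{\partial\Omega}|\tfrac{\partial u}{\partial\nu}|^{2}+\int_{\partial\Omega}|u|^{2}m^{2}$; and the energy identity together with the Fefferman--Phong inequality (Lemma~\ref{Fefferman-Phong}) and a weighted trace estimate supplies $\int_{\partial\Omega}|u|^{2}m^{2}\lesssim\int_{\partial\Omega}|\tfrac{\partial u}{\partial\nu}|^{2}$, so the three together give \eqref{Rellich-identity-5.5}.

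The heart of the matter is $(\ast)$, and it is the step I expect to be the main obstacle. Since $V\in B_{\infty}$ is merely locally bounded and need not be $x_{d}$-independent, $\int_{\Omega}(\partial_{d}u)Vu\,dx=\tfrac12\int_{\Omega}V\,\partial_{d}(u^{2})\,dx$ is not a pure boundary integral and must be estimated directly at the natural scale $m(x,V)^{-1}$. I would cover $\Omega$ by a Whitney-type family $\{Q_{j}\}$ of cubes with $\ell(Q_{j})\sim m(x_{j},V)^{-1}$, $x_{j}\in Q_{j}$, and bounded overlap; by Lemma~\ref{lemma-function m}, $m(\cdot,V)\sim m_{j}:=m(x_{j},V)$ on $2Q_{j}$, while $V\le Cm_{j}^{2}$ there (Proposition~\ref{upper-bound-V}) and $\fint_{2Q_{j}}V\sim m_{j}^{2}$ (Proposition~\ref{prop2.6}). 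On the interior cubes ($2Q_{j}\subset\Omega$), Caccioppoli's inequality for $\mathrm{div}(A\nabla u)=Vu$ gives $\int_{Q_{j}}\bigl(|\nabla u|^{2}+Vu^{2}\bigr)\le Cm_{j}^{2}\int_{2Q_{j}}u^{2}$, so Cauchy--Schwarz yields $\int_{Q_{j}}V|u||\nabla u|\le C\int_{2Q_{j}}m^{3}u^{2}$; on the cubes abutting $\partial\Omega$ one instead uses the nontangential control $(\nabla u)^{*}\in L^{2}(\partial\Omega)$ and a scaled trace inequality, with the boundary $L^{\infty}$ and De Giorgi--Nash estimates of Section~\ref{section-3} in the background, to pass the corresponding mass onto $\int_{\partial\Omega}(|u|^{2}m^{2}+|\nabla_{tan}u|^{2})$. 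Summing over $j$, the surviving bulk integrals (of type $\int_{\Omega}m^{3}u^{2}$ and $\int_{\Omega}Vu^{2}$) are re-expressed through the Fefferman--Phong inequality, the bound $|\nabla m|\le Cm^{2}$ (a consequence of Lemma~\ref{lemma-function m}), the weighted trace identity $\int_{\partial\Omega}|n_{d}|\,m^{2}u^{2}\,d\sigma=\int_{\Omega}\partial_{d}(m^{2}u^{2})\,dx$, and the energy identity.

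Throughout, the delicate point is to obtain precisely the \emph{small} constant $\varepsilon$ in front of $\int_{\partial\Omega}|\tfrac{\partial u}{\partial\nu}|^{2}$ in $(\ast)$ (an $O(1)$ constant there would destroy the absorption, and with it the whole scheme), and it is this requirement that forces the scale-$m^{-1}$ localization and the systematic use of the Fefferman--Phong machinery rather than a naive integration by parts.
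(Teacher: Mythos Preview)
Your approach is essentially the paper's: the Rellich identity built from the transversal field $e_d$ (Lemmas~\ref{Rellich-lemma-5.5-1}--\ref{Rellich-lemma-5.5-2} and Corollary~\ref{Rellich-estimate-1}), followed by control of the volume error via Fefferman--Phong weighted estimates at scale $m(x,V)^{-1}$ --- the paper simply quotes the needed weighted bounds as Lemma~\ref{lemma-m-1} (from Shen, 1994) rather than redoing the covering argument you sketch, and it writes the error as $\int_\Omega|\nabla u||u|\,m^2\,dx$ after invoking $V\le Cm^2$. One caveat: your $(\ast)$ as written carries $C_\varepsilon\int_{\partial\Omega}|\nabla_{tan}u|^2$ on the right, so it only directly gives $\int_{\partial\Omega}|\tfrac{\partial u}{\partial\nu}|^2\lesssim\int_{\partial\Omega}(|\nabla_{tan}u|^2+|u|^2m^2)$; for the reverse inequality the paper does not reuse $(\ast)$ but runs the separate chain $\int_\Omega|u|^2m^3\,dx+\int_\Omega|\nabla u|^2m\,dx\le C\int_{\partial\Omega}|\tfrac{\partial u}{\partial\nu}|^2\,d\sigma$ (see \eqref{4.9}--\eqref{4.11}), which your same Whitney machinery supplies.
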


For simplicity, we write $w_+$ as $w$. Then we have the following Rellich identities.
\begin{lemma}\label{Rellich-lemma-5.5-1}
Suppose that $A$ satisfies \eqref{symmetric} and \eqref{independent}.
Let $u$ be as in Theorem \ref{Rellich-lemma-5.5}.
Then
\begin{equation}\label{Rellich-identity-5.5-1}
    \int_{\partial\Omega}n_d a_{ij}\frac{\partial u}{\partial x_i}  \frac{\partial u}{\partial x_j}\, d\sigma
    =2\int_{\partial\Omega}\frac{\partial u}{\partial x_d}\frac{\partial u}{\partial \nu}\,d\sigma+2\int_{\Omega} \frac{\partial u}{\partial x_d} \mathcal{L}(u)\, dx.
\end{equation}
\end{lemma}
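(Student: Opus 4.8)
The plan is to derive the identity \eqref{Rellich-identity-5.5-1} by the classical Rellich–Pohozaev integration-by-parts trick, using the constant vector field $e_d=(0,\dots,0,1)$ as multiplier. Concretely, I would start from the expression $\int_\Omega \partial_{x_d}\big(a_{ij}\partial_{x_i}u\,\partial_{x_j}u\big)\,dx$ and evaluate it in two ways. On the one hand, since $A$ is $x_d$-independent by \eqref{independent}, the derivative $\partial_{x_d}$ falls only on the two factors $\partial_{x_i}u$ and $\partial_{x_j}u$, giving $\int_\Omega \partial_{x_d}\big(a_{ij}\partial_{x_i}u\,\partial_{x_j}u\big)\,dx = 2\int_\Omega a_{ij}\,\partial_{x_i}(\partial_{x_d}u)\,\partial_{x_j}u\,dx$, where I have used the symmetry \eqref{symmetric} to combine the two terms. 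On the other hand, the divergence theorem applied to the vector field with $d$-th component $a_{ij}\partial_{x_i}u\,\partial_{x_j}u$ (and all other components zero) gives $\int_\Omega \partial_{x_d}\big(a_{ij}\partial_{x_i}u\,\partial_{x_j}u\big)\,dx = \int_{\partial\Omega} n_d\, a_{ij}\partial_{x_i}u\,\partial_{x_j}u\,d\sigma$, which is exactly the left-hand side of \eqref{Rellich-identity-5.5-1}.

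It then remains to integrate $2\int_\Omega a_{ij}\,\partial_{x_i}(\partial_{x_d}u)\,\partial_{x_j}u\,dx$ by parts once more, moving the derivative $\partial_{x_i}$ off $\partial_{x_d}u$ and onto $a_{ij}\partial_{x_j}u$. This produces a boundary term $2\int_{\partial\Omega} n_i\, a_{ij}\partial_{x_j}u\,\partial_{x_d}u\,d\sigma = 2\int_{\partial\Omega}\frac{\partial u}{\partial\nu}\,\frac{\partial u}{\partial x_d}\,d\sigma$ by the definition of the conormal derivative, together with an interior term $-2\int_\Omega \partial_{x_i}\big(a_{ij}\partial_{x_j}u\big)\,\partial_{x_d}u\,dx = 2\int_\Omega \mathcal{L}(u)\,\partial_{x_d}u\,dx$, recalling $\mathcal{L}u = -\operatorname{div}(A\nabla u) = -\partial_{x_i}(a_{ij}\partial_{x_j}u)$. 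Collecting these gives precisely \eqref{Rellich-identity-5.5-1}.

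The main point requiring care is the justification of the integrations by parts on the unbounded domain $\Omega$: the boundary of $\Omega$ is noncompact and one must ensure there is no contribution from "infinity." This is where the decay hypothesis $|u(x)|+|x||\nabla u(x)| = O(|x|^{2-d})$ from Theorem \ref{Rellich-lemma-5.5} enters — it forces the integrand $a_{ij}\partial_{x_i}u\,\partial_{x_j}u$ and the mixed product $a_{ij}\partial_{x_j}u\,\partial_{x_d}u$ to be $O(|x|^{2(2-d)-2}) = O(|x|^{2-2d})$, which is integrable at infinity in $\mathbb{R}^d$ for $d\geq 3$, and more importantly the flux through a large sphere $|x|=\rho$ is $O(\rho^{d-1}\cdot\rho^{2-2d}) = O(\rho^{1-d}) \to 0$. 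The rigorous argument is thus to carry out all the above on the truncated domain $\Omega\cap B(0,\rho)$, observe that the extra boundary terms over $\partial B(0,\rho)\cap\Omega$ vanish as $\rho\to\infty$ by this decay, and also that $(\nabla u)^*\in L^2(\partial\Omega)$ guarantees all the boundary integrals over $\partial\Omega$ are finite and that the nontangential traces used in the integration by parts are the correct ones. A standard approximation (mollifying $u$, or working with $\Omega$ replaced by a slightly shifted smooth domain $\Omega_\varepsilon$ and passing to the limit) handles the limited regularity of $\partial\Omega$; none of this is difficult, but it is the only genuinely non-formal part of the argument.
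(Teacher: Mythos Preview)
Your proposal is correct and follows essentially the same route as the paper: apply the divergence theorem to $\partial_{x_d}\big(a_{ij}\partial_{x_i}u\,\partial_{x_j}u\big)$, use the $x_d$-independence \eqref{independent} together with the symmetry \eqref{symmetric} to reduce to $2\int_\Omega a_{ij}\,\partial_{x_i}(\partial_{x_d}u)\,\partial_{x_j}u\,dx$, and then integrate by parts once more to produce the conormal boundary term and the $\mathcal{L}(u)$ interior term. The paper's proof is terser and does not spell out the justification of the integrations by parts on the unbounded domain via the decay hypothesis and $(\nabla u)^*\in L^2(\partial\Omega)$ as you do, but the underlying computation is identical.
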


\begin{proof}
By using the divergence theorem we have
\begin{align*}
     &\int_{\partial\Omega}n_d a_{ij}\frac{\partial u}{\partial x_i}  \frac{\partial u}{\partial x_j}\, d\sigma=\int_{\Omega} \frac{\partial}{\partial x_d}\left\{ a_{ij}\frac{\partial u}{\partial x_i}  \frac{\partial u}{\partial x_j}\right\}\, dx\\
    &=\int_{\Omega} \frac{\partial}{\partial x_d}\Big\{ a_{ij}\Big\}\frac{\partial u}{\partial x_i} \frac{\partial u}{\partial x_j}\, dx+\int_{\Omega}  a_{ij}\frac{\partial^2 u}{\partial x_d\partial x_i}\cdot \frac{\partial u}{\partial x_j}\, dx+\int_{\Omega}  a_{ij}\frac{\partial u}{\partial x_i}\cdot \frac{\partial^2 u}{\partial x_d\partial x_j}\, dx\\
    &=2\int_{\Omega} a_{ij} \frac{\partial^2 u}{\partial x_d\partial x_i}\cdot \frac{\partial u}{\partial x_j}\, dx
\end{align*}
where \eqref{symmetric} and \eqref{independent} were used. Applying integration by parts, we have
\begin{equation}
    \begin{aligned}
    \label{1}
    2\int_{\Omega}  a_{ij} \frac{\partial^2 u}{\partial x_d\partial x_i}\cdot \frac{\partial u}{\partial x_j}\, dx
    &=2\int_{\partial\Omega} \frac{\partial u}{\partial x_d} \cdot a_{ij} \frac{\partial u}{\partial x_j}\cdot n_i\,d\sigma-2\int_{\Omega} \frac{\partial u}{\partial x_d}\cdot\frac{\partial}{\partial x_i}\left\{a_{ij} \frac{\partial u}{\partial x_j}\right\}\, dx\\
    &=2\int_{\partial\Omega}\frac{\partial u}{\partial x_d}\frac{\partial u}{\partial \nu}\,d\sigma+2\int_{\Omega} \frac{\partial u}{\partial x_d}  \mathcal{L}(u)\, dx.
    \end{aligned}
\end{equation}
Then we obtain \eqref{Rellich-identity-5.5-1}.
\end{proof}

\begin{lemma}\label{Rellich-lemma-5.5-2}
Under the same assumptions as in Lemma \ref{Rellich-lemma-5.5-1}, we have
\begin{equation}\label{Rellich-identity-5.5-2}
\int_{\partial\Omega}n_d a_{ij}\frac{\partial u}{\partial x_i}  \frac{\partial u}{\partial x_j}\, d\sigma
    =2\int_{\partial\Omega}  a_{ij} \frac{\partial u}{\partial x_j}\left\{n_k\frac{\partial }{\partial x_i}-n_i\frac{\partial }{\partial x_d}\right\} u \, dx-2\int_{\Omega} \frac{\partial u}{\partial x_d} \mathcal{L}(u)\, dx.
\end{equation}
\end{lemma}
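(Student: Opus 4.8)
The plan is to prove \eqref{Rellich-identity-5.5-2} by the same integration-by-parts scheme used for Lemma \ref{Rellich-lemma-5.5-1}, with one extra algebraic step that isolates the tangential operator $n_d\partial_{x_i}-n_i\partial_{x_d}$ on the boundary. Throughout write $I:=\int_{\partial\Omega}n_d\,a_{ij}\frac{\partial u}{\partial x_i}\frac{\partial u}{\partial x_j}\,d\sigma$ for the left-hand side.

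\emph{Step 1 (pointwise decomposition on $\partial\Omega$).} On $\partial\Omega$ one has $n_d\frac{\partial u}{\partial x_i}=\big(n_d\frac{\partial}{\partial x_i}-n_i\frac{\partial}{\partial x_d}\big)u+n_i\frac{\partial u}{\partial x_d}$; multiplying by $a_{ij}\frac{\partial u}{\partial x_j}$, summing in $i,j$, and recalling $n_i a_{ij}\frac{\partial u}{\partial x_j}=\frac{\partial u}{\partial\nu}$ gives
\[
n_d\,a_{ij}\frac{\partial u}{\partial x_i}\frac{\partial u}{\partial x_j}
= a_{ij}\frac{\partial u}{\partial x_j}\Big\{n_d\frac{\partial}{\partial x_i}-n_i\frac{\partial}{\partial x_d}\Big\}u+\frac{\partial u}{\partial\nu}\cdot\frac{\partial u}{\partial x_d}\qquad\text{on }\partial\Omega .
\]
Integrating over $\partial\Omega$ yields $I=A+B$, where $A$ is the first integral on the right of \eqref{Rellich-identity-5.5-2} and $B:=\int_{\partial\Omega}\frac{\partial u}{\partial\nu}\frac{\partial u}{\partial x_d}\,d\sigma$.

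\emph{Step 2 (evaluating $B$).} There are two equivalent routes. The quickest is to note that \eqref{Rellich-identity-5.5-1}, already established, reads $I=2B+2\int_\Omega\frac{\partial u}{\partial x_d}\mathcal{L}(u)\,dx$, hence $B=\tfrac12 I-\int_\Omega\frac{\partial u}{\partial x_d}\mathcal{L}(u)\,dx$; substituting into $I=A+B$ and solving for $I$ gives exactly \eqref{Rellich-identity-5.5-2}. Alternatively one reproves this directly: by the divergence theorem $B=\int_{\partial\Omega}n_i\big(a_{ij}\frac{\partial u}{\partial x_j}\big)\frac{\partial u}{\partial x_d}\,d\sigma=\int_\Omega\frac{\partial}{\partial x_i}\big(a_{ij}\frac{\partial u}{\partial x_j}\,\frac{\partial u}{\partial x_d}\big)\,dx$, and expanding the derivative splits $B$ into $-\int_\Omega\mathcal{L}(u)\frac{\partial u}{\partial x_d}\,dx$ (since $\mathcal{L}=-{\rm div}(A\nabla\cdot)$) plus $\int_\Omega a_{ij}\frac{\partial u}{\partial x_j}\frac{\partial^2 u}{\partial x_i\partial x_d}\,dx$; by the symmetry \eqref{symmetric} and $\partial_{x_d}a_{ij}=0$ from \eqref{independent}, the last integrand equals $\tfrac12\,\partial_{x_d}\big(a_{ij}\frac{\partial u}{\partial x_i}\frac{\partial u}{\partial x_j}\big)$, which integrates to $\tfrac12 I$ by one more use of the divergence theorem. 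Either way $B=\tfrac12 I-\int_\Omega\frac{\partial u}{\partial x_d}\mathcal{L}(u)\,dx$, and $I=A+B$ rearranges into \eqref{Rellich-identity-5.5-2}.

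\emph{Main obstacle.} No step is conceptually hard; the only thing needing care is the rigorous justification of the integrations by parts on the unbounded Lipschitz domain $\Omega$, since $u$ is only a weak solution with $(\nabla u)^\ast\in L^2(\partial\Omega)$ and boundary values taken in the nontangential-limit sense. As in the hypotheses of Theorem \ref{Rellich-lemma-5.5}, the decay $|u(x)|+|x||\nabla u(x)|=O(|x|^{2-d})$ guarantees convergence of all the integrals above and, when the identities are first proved on the truncated domains $\Omega\cap B(0,R)$ and $R\to\infty$ is taken, that the contribution of the artificial sphere $\partial B(0,R)\cap\Omega$ tends to $0$; the Dini regularity of $A$ causes no trouble here, as $\partial_{x_d}a_{ij}=0$ is an exact identity rather than a regularity estimate and $\mathcal{L}(u)$ appears only paired with $\frac{\partial u}{\partial x_d}$ inside $\Omega$. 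I would present this limiting/approximation argument only schematically, as it is standard in the Rellich-identity literature.
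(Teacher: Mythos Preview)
Your proof is correct and is essentially the paper's own argument. The paper's proof is the one-line observation that, with $I$ and $J$ the two sides of \eqref{Rellich-identity-5.5-1}, the identity $I=J$ gives $I=2I-J$, and expanding $2I-J$ (using $\frac{\partial u}{\partial\nu}=n_i a_{ij}\frac{\partial u}{\partial x_j}$) produces the tangential-derivative form \eqref{Rellich-identity-5.5-2}; your Step~1 together with the first route of Step~2 is precisely this computation, just with the pointwise decomposition $I=A+B$ made explicit before invoking \eqref{Rellich-identity-5.5-1}.
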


\begin{proof}
Let $I$ and $J$ denote the left and right hand sides of (\ref{Rellich-identity-5.5-1}) respectively.
Identity (\ref{Rellich-identity-5.5-2})
follows from (\ref{Rellich-identity-5.5-1}) by writing
$J$ as $2I-J$.
\end{proof}

\begin{cor}\label{Rellich-estimate-1}
Under the same assumptions as in Theorem \ref{Rellich-lemma-5.5}, we have
    \begin{equation}\label{Rellich-identity-5.5-3}
      \int_{\partial \Omega}\left|\nabla u\right|^{2} d\sigma\leq C\int_{\partial \Omega}\left|\frac{\partial u}{\partial \nu}\right|^{2} d\sigma+C\int_{\Omega}|\nabla u||u|m(x, V)^2\,dx
    \end{equation}
    and
    \begin{equation}\label{Rellich-identity-5.5-4}
      \int_{\partial \Omega}\left|\nabla u\right|^{2} d\sigma\leq C\int_{\partial \Omega}\left|\nabla_{tan} u\right|^{2} d\sigma+C\int_{\Omega}|\nabla u||u|m(x, V)^2\,dx
    \end{equation}
where $C$ depends only on $\mu$, $M$ and the constant in \eqref{V}.
\end{cor}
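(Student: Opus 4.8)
The plan is to deduce Corollary \ref{Rellich-estimate-1} from the two Rellich identities in Lemmas \ref{Rellich-lemma-5.5-1} and \ref{Rellich-lemma-5.5-2} together with ellipticity \eqref{ellipticity}, the fact that $u$ solves $\mathcal{L}u = -Vu$, and the bound $V(x) \leq C m(x,V)^2$ from Proposition \ref{upper-bound-V}. The key observation is that, since $A$ is symmetric and elliptic with $\|A\|_{L^\infty} \leq \mu^{-1}$, the bilinear form $\xi \mapsto a_{ij}\xi_i\xi_j$ is comparable to $|\xi|^2$; hence the left-hand side $\int_{\partial\Omega} n_d a_{ij}\frac{\partial u}{\partial x_i}\frac{\partial u}{\partial x_j}\,d\sigma$ controls $\int_{\partial\Omega} n_d |\nabla u|^2\,d\sigma$ only up to the sign of $n_d$. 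Here one uses the geometry: $\Omega$ lies above a Lipschitz graph with $\|\nabla\Psi\|_\infty \leq M$, so the outward normal satisfies $n_d \leq -c_M < 0$ uniformly on $\partial\Omega$ for some $c_M$ depending only on $M$. Thus $-\int_{\partial\Omega} n_d a_{ij}\frac{\partial u}{\partial x_i}\frac{\partial u}{\partial x_j}\,d\sigma \geq c\, \mu\, c_M \int_{\partial\Omega}|\nabla u|^2\,d\sigma$.

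For \eqref{Rellich-identity-5.5-3}, I would start from \eqref{Rellich-identity-5.5-1}, multiply through by $-1$, and estimate the right-hand side: the first term is bounded by $2\int_{\partial\Omega}|\frac{\partial u}{\partial x_d}||\frac{\partial u}{\partial\nu}|\,d\sigma$, which after an elementary Cauchy--Schwarz (absorbing the $|\nabla u|$ factor into the left side via the geometric lower bound) contributes $C\int_{\partial\Omega}|\frac{\partial u}{\partial\nu}|^2\,d\sigma$ plus a small multiple of $\int_{\partial\Omega}|\nabla u|^2\,d\sigma$ that gets absorbed; for the solid integral term, $\mathcal{L}(u) = -Vu$ gives $\left|2\int_\Omega \frac{\partial u}{\partial x_d}\mathcal{L}(u)\,dx\right| \leq C\int_\Omega |\nabla u||u|\,V\,dx \leq C\int_\Omega |\nabla u||u|\,m(x,V)^2\,dx$ by Proposition \ref{upper-bound-V}. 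For \eqref{Rellich-identity-5.5-4}, the same argument is applied to \eqref{Rellich-identity-5.5-2}: the first term on its right-hand side is $2\int_{\partial\Omega} a_{ij}\frac{\partial u}{\partial x_j}\{n_k\frac{\partial}{\partial x_i} - n_i\frac{\partial}{\partial x_d}\}u\,d\sigma$, where the operators $n_k\partial_{x_i} - n_i\partial_{x_d}$ are tangential derivatives along $\partial\Omega$ (each such combination kills the normal direction), so this term is bounded by $C\int_{\partial\Omega}|\nabla u||\nabla_{tan}u|\,d\sigma \leq \varep \int_{\partial\Omega}|\nabla u|^2\,d\sigma + C_\varep\int_{\partial\Omega}|\nabla_{tan}u|^2\,d\sigma$, and the solid term is handled exactly as before.

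I would organize the write-up so that the boundary integrals are manipulated carefully with respect to which quantities sit on which side: bring the $\int_{\partial\Omega}|\nabla u|^2$ contribution produced by Cauchy--Schwarz on the right over to the left, where the geometric lower bound $-n_d \geq c_M$ makes the left-hand coefficient strictly positive, and choose the Cauchy--Schwarz splitting parameter $\varep$ small enough (depending on $\mu, M$) that the absorption is legitimate. The decay hypothesis $|u(x)| + |x||\nabla u(x)| = O(|x|^{2-d})$ guarantees that all the integrations by parts in Lemmas \ref{Rellich-lemma-5.5-1}--\ref{Rellich-lemma-5.5-2} are justified and that there is no boundary contribution at infinity, so nothing extra is needed here.

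The main obstacle is bookkeeping rather than depth: one must check that $n_k\partial_{x_i} - n_i\partial_{x_d}$ genuinely yields tangential derivatives (equivalently, that $\nabla_{tan}u$ as defined pointwise on $\partial\Omega$ dominates these combinations up to a constant depending on the Lipschitz character), and one must be disciplined about tracking the $\varep$-absorption so that no circularity creeps in when $\int_{\partial\Omega}|\nabla u|^2$ appears on both sides. Both $d = 3$ and $d \geq 4$ are treated uniformly here since the only place dimension entered — the fundamental solution estimates — is not used in this corollary; only $V \leq C m(x,V)^2$ is.
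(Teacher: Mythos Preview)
Your proposal is correct and follows exactly the route the paper intends: the paper's own proof is the single line ``This directly follows from Lemma \ref{Rellich-lemma-5.5-1}, \ref{Rellich-lemma-5.5-2} and the fact $V(x)\leq Cm(x,V)^2$,'' and what you have written is precisely the unpacking of that sentence --- using $-n_d \geq c_M>0$ from the graph geometry, ellipticity to compare $a_{ij}\partial_i u\,\partial_j u$ with $|\nabla u|^2$, Cauchy's inequality with $\varepsilon$ to absorb, and $\mathcal{L}u=-Vu$ together with Proposition \ref{upper-bound-V} for the solid term. Your observation that $n_d\partial_{x_i}-n_i\partial_{x_d}$ is tangential (the index $k$ in \eqref{Rellich-identity-5.5-2} is a typo for $d$) is the right way to read \eqref{Rellich-identity-5.5-2}.
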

\begin{proof}
This directly follows from Lemma \ref{Rellich-lemma-5.5-1}, \ref{Rellich-lemma-5.5-2} and the fact $V(x)\leq Cm(x,V)^2$.
\end{proof}

Next, we will establish a number of results which essentially from \cite{Shen-1994-IUMJ}.
Other versions of these lemmas appeared in \cite{Shen-1995} and
\cite{Shen-1999}, and are related to the ideas of Fefferman and Phong \cite{Fefferman-1983}. We
omit the proofs in our exposition.
\begin{lemma}\label{lemma-m-1}
 Let $u$ be as in Theorem \ref{Rellich-lemma-5.5}. Then we have the following
 \begin{align}\label{lemma-m-2}
        \int_{\Omega}|u(x)|^{2} m(x, V)^{3} d x \leq C \int_{\partial \Omega}\left|\frac{\partial u}{\partial \nu}\right|^{2} d\sigma,
    \end{align}
    \begin{align}\label{estimate-Rellich-mu3}
        \int_{\Omega}|u(x)|^{2} m(x, V)^{3} d x \leq C \int_{\partial \Omega}|u(x)|^{2} m(x, V)^{2} d \sigma,
    \end{align}
    \begin{align}\label{estimate-app-Rellich-3}
        \int_{\partial\Omega}|u|^2m(x,V)^2\,d\sigma & \leq C\int_{\Omega}|u|^2m(x,V)^3\,dx+C\int_{\Omega}|\nabla u||u|m(x, V)^2\,dx,
    \end{align}
 \begin{align}\label{estimate-Rellich-mu4}
     \int_{\Omega}|\nabla u(x)|^{2} m(x, V)\, d x \leq C \int_{\partial \Omega}\left|\frac{\partial u}{\partial\nu}\right||u|m(x,V)\,d \sigma+C\int_{\Omega}| u(x)|^{2} m(x, V)^3\, d x.
 \end{align}
\end{lemma}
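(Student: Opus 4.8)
\medskip\noindent The plan is to establish all four inequalities as the $x_d$-independent, Lipschitz-graph analogues of the corresponding estimates in \cite{Shen-1994-IUMJ}. Two devices will be used throughout. First, since $m(\cdot,V)$ need not be differentiable, I would fix a Lipschitz weight $\widetilde m$ on $\R^d$ with
\[
\widetilde m(x)\sim m(x,V)\qquad\text{and}\qquad |\nabla\widetilde m(x)|\le C\,m(x,V)^2\quad\text{for a.e. }x,
\]
whose existence follows by regularizing $m(\cdot,V)$ at the scale $1/m(\cdot,V)$ and invoking the comparison estimates \eqref{estimate-m1}--\eqref{estimate-m3} of Lemma \ref{lemma-function m}. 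Second, besides the Fefferman--Phong inequality of Lemma \ref{Fefferman-Phong} I would use its local form: for every ball $B$ of radius $1/m(x_0,V)$ centered at a point $x_0$ and every $w\in C_0^1(\R^d)$,
\[
\int_{B}|w|^2 m(x_0,V)^2\,dx\le C\int_{B}|\nabla w|^2\,dx+C\int_{B}V|w|^2\,dx,
\]
which comes from Proposition \ref{prop2.6}, Proposition \ref{upper-bound-V} and the covering argument behind Lemma \ref{Fefferman-Phong}. Throughout, the decay $|u(x)|+|x|\,|\nabla u(x)|=O(|x|^{2-d})$ assumed in Theorem \ref{Rellich-lemma-5.5} is what legitimizes every integration by parts on the unbounded $\Omega$ and forces the boundary terms at infinity to vanish.

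The two estimates with one derivative of $u$ on each side are the routine ones. For \eqref{estimate-Rellich-mu4} I would test $\mathcal{L}u+Vu=0$ against $\varphi=u\widetilde m$, obtaining
\[
\int_\Omega (A\nabla u\cdot\nabla u)\,\widetilde m\,dx+\int_\Omega V u^2\widetilde m\,dx=\int_{\partial\Omega}\frac{\partial u}{\partial\nu}\,u\,\widetilde m\,d\sigma-\int_\Omega (A\nabla u\cdot\nabla\widetilde m)\,u\,dx;
\]
since $V\ge0$, ellipticity bounds the left side below by $c\int_\Omega|\nabla u|^2 m(x,V)\,dx$, the boundary term is $\le C\int_{\partial\Omega}|\partial u/\partial\nu|\,|u|\,m(x,V)\,d\sigma$, and by $|\nabla\widetilde m|\le Cm(\cdot,V)^2$ and Young's inequality the last term is at most $\varepsilon\int_\Omega|\nabla u|^2 m(x,V)\,dx+C_\varepsilon\int_\Omega|u|^2 m(x,V)^3\,dx$; absorbing the $\varepsilon$-term gives \eqref{estimate-Rellich-mu4}. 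For \eqref{estimate-app-Rellich-3} I would use that the graph $\partial\Omega=\{x_d=\Psi(x')\}$ has outward normal with $-e_d\cdot n\ge(1+M^2)^{-1/2}>0$; applying the divergence theorem to the vector field $-e_d\,|u|^2\widetilde m^2$ on $\Omega$, expanding $\partial_d(|u|^2\widetilde m^2)$ by the product rule, and using $\widetilde m\sim m(\cdot,V)$ together with $|\nabla\widetilde m|\le Cm(\cdot,V)^2$ yields
\[
\int_{\partial\Omega}|u|^2 m(x,V)^2\,d\sigma\le C\int_\Omega|\nabla u|\,|u|\,m(x,V)^2\,dx+C\int_\Omega|u|^2 m(x,V)^3\,dx,
\]
which is \eqref{estimate-app-Rellich-3}.

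The remaining two, \eqref{lemma-m-2} and \eqref{estimate-Rellich-mu3}, are the heart of the lemma, and here I would adapt Shen's proofs directly. The common mechanism is a weighted integration by parts for $\mathcal{L}u+Vu=0$ --- using vector fields and test functions built from $\widetilde m$ and from the $x_d$-direction (so that the $x_d$-independence of $A$ eliminates the coefficient-derivative terms), together with the $u\widetilde m$-identity above --- combined with the \emph{local} Fefferman--Phong inequality applied on a bounded-overlap Whitney cover of $\Omega$ at scale $1/m(\cdot,V)$ (to $u$ times a subordinate cutoff), and with the pointwise estimates for $m(\cdot,V)$ of Proposition \ref{prop2.6} and Lemma \ref{lemma-function m}; the outcome of these manipulations is that a controlled fraction of $\int_\Omega|u|^2 m(x,V)^3\,dx$ is rewritten in terms of $\int_\Omega|\nabla u|^2 m(x,V)\,dx+\int_\Omega Vu^2 m(x,V)\,dx$, which in turn is dominated by the prescribed boundary quantity ($\int_{\partial\Omega}|\partial u/\partial\nu|^2\,d\sigma$ for \eqref{lemma-m-2}, $\int_{\partial\Omega}|u|^2 m(x,V)^2\,d\sigma$ for \eqref{estimate-Rellich-mu3}). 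The main obstacle --- and essentially the only genuinely delicate point in the whole lemma --- is this absorption: the weight $m^3$ makes each commutator and cutoff term produced along the way again comparable to $\int_\Omega|u|^2 m(x,V)^3\,dx$, so one must keep quantitative control of the constants coming from the localized Fefferman--Phong step and from the overlap of the Whitney cover in order to conclude that the offending term is truly absorbed and not merely reproduced; this is exactly where the scaling identity $\psi(x,\hat r)=1$ of Proposition \ref{prop2.6} and the exponents of Lemma \ref{lemma-function m} are needed.
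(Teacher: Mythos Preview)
Your proposal is in the same spirit as the paper's own proof, which simply records that the lemma is essentially Lemmas~2.2, 2.6 and 2.7 of \cite{Shen-1994-IUMJ} with slight modification and omits all details. Your arguments for \eqref{estimate-app-Rellich-3} and \eqref{estimate-Rellich-mu4} are correct and in fact more explicit than what the paper provides; the aside about ``vector fields in the $x_d$-direction'' is extraneous here --- that device belongs to the Rellich identities of Lemmas~\ref{Rellich-lemma-5.5-1}--\ref{Rellich-lemma-5.5-2}, whereas the present lemma only tests the equation against scalar weights.

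One caution on \eqref{lemma-m-2}--\eqref{estimate-Rellich-mu3}. You are right that the absorption of the $\int_\Omega|u|^2m^3$ term is the crux, but the resolution is \emph{not} ``quantitative control of the constants'' coming from a Whitney cutoff partition. If you apply the localized Fefferman--Phong inequality to $u\eta_j$, the commutator $|\nabla\eta_j|^2|u|^2\sim m(\cdot,V)^2|u|^2$ regenerates $\int_\Omega|u|^2m(x,V)^3\,dx$ on the right-hand side with a \emph{fixed} constant determined by $d$, the $B_\infty$ constant, and the overlap number; no amount of bookkeeping makes that absorbable. What actually works is a \emph{cutoff-free} local Poincar\'e--type inequality on each ball $B=B(x_0,r_0)$ with $r_0=1/m(x_0,V)$: because $\psi(x_0,r_0)=1$ (Proposition~\ref{prop2.6}) forces $\int_B V\sim r_0^{d-2}$, one has
\[
r_0^{-2}\int_B|u|^2\,dx\le C\int_B|\nabla u|^2\,dx+C\int_B V|u|^2\,dx
\]
for \emph{all} $u\in H^1(B)$, not just compactly supported ones. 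Summing this over a bounded-overlap cover yields $\int_\Omega|u|^2m^3\le C\int_\Omega(|\nabla u|^2+V|u|^2)m$ with no residual $m^3$ error, after which your $u\widetilde m$ identity and Cauchy's inequality close the loop; for boundary balls the analogous trace--Poincar\'e inequality produces precisely the boundary term on the right of \eqref{estimate-Rellich-mu3}. With this correction your outline matches Shen's route.
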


\begin{proof}
This Lemma is essentially the same as Lemmas $2.2$, $2.6$ and $2.7$ in \cite{Shen-1994-IUMJ} with slightly modification. We omit the proof.
\end{proof}

Now we are ready to give the proof of Theorem \ref{Rellich-lemma-5.5}.
\begin{proof}[Proof of Theorem \ref{Rellich-lemma-5.5}]
It follows from \eqref{Rellich-identity-5.5-3} and Cauchy's inequality that
\begin{equation}\label{4.9}
  \begin{aligned}
  \int_{\partial \Omega}\left|\nabla u\right|^{2} d\sigma
  &\leq C\int_{\partial \Omega}\left|\frac{\partial u}{\partial \nu}\right|^{2} d\sigma+C\int_{\Omega}|\nabla u||u|m(x, V)^2\,dx\\
  &\leq C\int_{\partial \Omega}\left|\frac{\partial u}{\partial \nu}\right|^{2} d\sigma+C\int_{\Omega}|\nabla u|^2m(x, V)\,dx+C\int_{\Omega}|u|^2m(x, V)^3\,dx\\
  &\leq C\int_{\partial \Omega}\left|\frac{\partial u}{\partial \nu}\right|^{2} d\sigma+C\int_{\Omega}|\nabla u|^2m(x, V)\,dx
  \end{aligned}
\end{equation}
where \eqref{lemma-m-2} was used in the last inequality. Employing \eqref{estimate-Rellich-mu4}, we obtain
\begin{equation}\label{4.11}
\begin{aligned}
     &\int_{\Omega}|\nabla u(x)|^{2} m(x, V)\, d x
     \leq C \int_{\partial \Omega}\left|\frac{\partial u}{\partial\nu}\right||u|m(x,V)\,d \sigma+C\int_{\Omega}| u(x)|^{2} m(x, V)^3\, d x\\
     &\leq C \int_{\partial \Omega}\left|\frac{\partial u}{\partial\nu}\right|^2\,d \sigma+C\e \int_{\partial \Omega}|u|^2m(x,V)^2\,d \sigma+C\int_{\Omega}| u(x)|^{2} m(x, V)^3\, d x\\
     &\leq C_\e \int_{\partial \Omega}\left|\frac{\partial u}{\partial\nu}\right|^2\,d \sigma+C\e \int_{\partial \Omega}|u|^2m(x,V)^2\,d \sigma
 \end{aligned}
\end{equation}
where we use Cauchy's inequality with an $\e$ in the second inequality and \eqref{lemma-m-2} was used in the last one. To estimate $\int_{\partial\Omega}|u|^2m(x,V)^2\,d\sigma$, we use \eqref{estimate-app-Rellich-3} and H\"older's inequality to obtain
\begin{equation}\label{4.10}
   \begin{aligned}
     \int_{\partial\Omega}|u|^2m(x,V)^2\,d\sigma
     & \leq C\int_{\Omega}|u|^2m(x,V)^3\,dx+C\int_{\Omega}|\nabla u|^2m(x, V)\,dx\\
     &\leq C\int_{\partial \Omega}\left|\frac{\partial u}{\partial \nu}\right|^{2} d\sigma+C\int_{\Omega}|\nabla u|^2m(x, V)\,dx
   \end{aligned}
\end{equation}
where \eqref{lemma-m-2} was used again. Then it deduced from \eqref{4.9}-\eqref{4.11} that
$$\int_{\partial \Omega}\left|\nabla_{tan} u\right|^{2} d\sigma+\int_{\partial\Omega}|u|^2m(x, V)^2\,dx\leq C\int_{\partial \Omega}\left|\frac{\partial u}{\partial \nu}\right|^{2} d\sigma.$$
Similarly, \eqref{Rellich-identity-5.5-4} implies
\begin{equation}\label{4.12}
    \int_{\partial \Omega}\left|\frac{\partial u}{\partial \nu}\right|^{2} d\sigma\leq\int_{\partial \Omega}\left|\nabla u\right|^{2} d\sigma\leq C\int_{\partial \Omega}\left|\nabla_{tan} u\right|^{2} d\sigma+C\int_{\Omega}|\nabla u||u|m(x, V)^2\,dx.
\end{equation}
It follows from \eqref{lemma-m-2} as well as Cauchy's inequality with an $\e$ that
\begin{equation}\label{4.13}
  \begin{aligned}
    \int_{\Omega}|\nabla u||u|m(x, V)^2\,dx
    &\leq C \int_{\partial \Omega}\left|\frac{\partial u}{\partial\nu}\right||u|m(x,V)\,d \sigma+C \int_{\partial \Omega}|u|^2m(x,V)^2\,d \sigma\\
    &\leq C\e \int_{\partial \Omega}\left|\frac{\partial u}{\partial\nu}\right|^2\,d \sigma+C_{\e} \int_{\partial \Omega}|u|^2m(x,V)^2\,d \sigma
  \end{aligned}
\end{equation}
where \eqref{estimate-Rellich-mu4} was used. Hence $$\int_{\partial \Omega}\left|\frac{\partial u}{\partial \nu}\right|^{2} d\sigma\leq C\int_{\partial \Omega}\left|\nabla_{tan} u\right|^{2} d\sigma+C \int_{\partial \Omega}|u|^2m(x,V)^2\,d \sigma.$$
The same argument in $\Omega_-$ leads
\begin{equation*}
  \int_{\partial \Omega}\left|\left(\frac{\partial u}{\partial \nu}\right)_-\right|^{2} d\sigma\sim \int_{\partial \Omega}\left|\left(\nabla_{tan} u\right)_-\right|^{2} d\sigma+\int_{\partial\Omega}|u_-|^2m(x, V)^2\,d\sigma.
\end{equation*}
 The proof is complete.
\end{proof}

Armed with the Rellich estimates, we are ready to give the proof of Theorem \ref{main-thm-Lp-r} for the case $p=2$, by the method of layer potential.

\begin{proof}[Proof of Theorem \ref{main-thm-Lp-r} for the case $p=2$]
The proof follows by the well-known layer potential method. Roughly speaking, let $f\in L^2(\partial\Omega)$, then
\begin{equation}\label{single-layer}
u(x)=\mathcal{S} (f) (x)
=\int_{\partial\Omega} \Gamma (x, y) f(y)\, d\sigma (y)
\end{equation}
satisfies $\mathcal{L} u+Vu=0$ in $\R^d\backslash\partial\Omega$. Set
$$\widetilde{W}^{1,2}(\partial\Omega)=\{f: \|\nabla_{tan}f\|_{L^2(\partial\Omega)}+ \|fm(x,V)\|_{L^2(\partial\Omega)}<\infty\}.$$
As a result, one may establish
the existence of solutions in the $L^2$ regularity problem in $\Omega$ by showing that the operators $S: L^2(\partial\Omega)\to \widetilde{W}^{1,2}(\partial\Omega)$ is invertible. It follows from Proposition \ref{estimate of fundamental solution-1} as well as \eqref{f-solution-1}-\eqref{f-solution-2} that $\|(\nabla u)^*\|_{L^p(\partial\Omega)}\leq C\|f\|_{L^p(\partial\Omega)}$ for $1<p<\infty$. This combining with Theorem \ref{Rellich-lemma-5.5} completes the proof. We omit the details and refer the reader to see \cite{MT-1999-JFA} or \cite{Shen-1994-IUMJ}.
\end{proof}

\section{\bf \texorpdfstring{$L^p$}{Lp} regularity estimate}\label{section-6}
In this section, we are going to show Theorem \ref{main-thm-Lp-r} holds. We also remark here that our result in this section extends the result in \cite{Shen-1994-IUMJ} to $1<p<2+\e$, even in the case of $-\Delta u+V u=0$ in $\Omega$ and $u=g$ on $\partial\Omega$.

Notice that, the proof of Theorem \ref{main-thm-Lp-r} in the case of $p=2$ was given in Section \ref{section-4}. Thus it suffices for us to show that \eqref{rp-estimate} holds for $1< p<2$ and $2< p<2+\e$. And the two cases will be treated separately by different methods. 

\subsection{\texorpdfstring{$L^p$}{Lp} estimates for \texorpdfstring{$1<p<2$}{1<p<2} }
To handle Theorem \ref{main-thm-Lp-r} for the case of $1<p<2$, it suffices for us to assert the existence of solutions for the regularity problem when the data is in $\mathcal{H}_{1,at}^1$. The proof of these results follows the work of Dahlberg and Kenig \cite{Dahlberg-Kenig-1987} (see also \cite{Shen-1994-IUMJ});
thus, we shall give a brief. We first recall the definition of the Hardy spaces $\mathcal{H}_{1,at}^1$.

We say that $a$ is an $\mathcal{H}_{1,at}^1$-atom on $\partial\Omega$ if there exists a ball $B(P,r)$ with $P\in \partial\Omega$ such that $supp(a)\subset B(P,r)\cap \partial\Omega$, $\|\nabla_{tan}a\|_{L^2(\partial\Omega)}\leq |B(P,r)\cap\partial\Omega|^{-1/2}$.
For $f\in L^\frac{d-1}{d-2}(\partial\Omega)$ if $d\geq3$ and $f\in L^2(\partial\Omega)$ if $d=2$, we say $f\in \mathcal{H}^1_{1,at}(\partial\Omega)$ if there exist a sequence of atoms $\{a_j\}$ and a sequence of real numbers $\{\lambda_j\}$ such that $\sum_{j}|\lambda_j|<\infty$ and $f=\sum_j \lambda_j a_j$. The norm in $\mathcal{H}^1_{1,at}(\partial\Omega)$ is given by
$$\|f\|_{\mathcal{H}^1_{1,at}(\partial\Omega)}:=\inf\bigg\{\sum_{j}|\lambda_j|:f=\sum_j \lambda_j a_j\bigg\}.$$

\begin{thm}\label{L2}
Let $a$ be an atom and $u$ a weak solution of
$$
\mathcal{L} u+V u =0, \quad  \text{ in } \Omega, \quad  \text{and}\quad  u =a\quad \text{ on } \partial\Omega
$$
with $(\nabla u)^*\in L^2(\partial\Omega)$. Then we have
\begin{equation}\label{r-estimate-1<p<2-2}
\int_{\partial\Omega}(\nabla u )^*\,d\sigma
\le C,
\end{equation}
where $C$ depends only on $\mu$, $d$, the constant in \eqref{V} and the Lipschitz character of $\Omega$.
\end{thm}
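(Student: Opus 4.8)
The plan is to estimate $(\nabla u)^*$ separately on the local region $\Delta(P, Cr)$, where the size of the region is controlled by the atom's support, and on the complementary region $\partial\Omega \setminus \Delta(P, Cr)$, where one exploits decay of the Neumann function (or, here, the single layer / fundamental solution representation) away from the support of $a$.

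\textbf{Local estimate.} On $\Delta(P, Cr)$ I would use the Cauchy--Schwarz inequality together with the $L^2$ solvability already established (Theorem \ref{main-thm-Lp-r} for $p=2$, proved in Section \ref{section-4}):
\[
\int_{\Delta(P,Cr)}(\nabla u)^*\,d\sigma \leq |\Delta(P,Cr)|^{1/2}\left(\int_{\partial\Omega}|(\nabla u)^*|^2\,d\sigma\right)^{1/2} \leq C|\Delta(P,Cr)|^{1/2}\left(\|\nabla_{tan}a\|_{L^2(\partial\Omega)}^2 + \|a\,m(x,V)\|_{L^2(\partial\Omega)}^2\right)^{1/2}.
\]
The normalization $\|\nabla_{tan}a\|_{L^2(\partial\Omega)} \leq |\Delta(P,r)|^{-1/2}$ handles the first term. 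For the second term, since $\operatorname{supp}(a) \subset \Delta(P,r)$ and $|a| \leq Cr\|\nabla_{tan}a\|_{L^\infty}$ is not directly available, I would instead use a Poincaré-type inequality on $\Delta(P,r)$ to bound $\|a\|_{L^2}$ in terms of $r\|\nabla_{tan}a\|_{L^2}$, together with the bound $m(x,V) \leq C m(P,V)$ on $\Delta(P,r)$ when $r \lesssim 1/m(P,V)$ (from \eqref{estimate-m1}), or the upper bound in \eqref{estimate-m2} otherwise; in either case $r\,m(x,V)$ is controlled on the support, giving $\|a\,m(x,V)\|_{L^2} \leq C|\Delta(P,r)|^{-1/2}$. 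Thus the local contribution is bounded by an absolute constant.

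\textbf{Far estimate.} On $\partial\Omega \setminus \Delta(P, Cr)$ the main tool is the decay estimate of Proposition \ref{estimate of fundamental solution-1} (equivalently Proposition \ref{Nr-estimate} for the Neumann function), namely $|\nabla_x \Gamma(x,y)| \leq C_k (1+|x-y|m(x,V))^{-k}|x-y|^{1-d}$. Writing $u$ via the layer potential representation and differentiating, the nontangential maximal function $(\nabla u)^*(z)$ for $z$ at distance $t = |z-P| \gg r$ from the support is controlled, after using the moment/cancellation structure of the atom and the Hölder continuity of $\nabla_x\Gamma$ in $y$ (which follows from Theorem \ref{Holer-estimate} applied to $\nabla\Gamma$, or from the Dini-continuity estimates of Section \ref{section-3}), by something like
\[
(\nabla u)^*(z) \leq \frac{C_k\, r^{\alpha}}{(1+t\,m(P,V))^k}\cdot\frac{1}{t^{d-1+\alpha}}\,\|a\|_{L^1(\partial\Omega)} + (\text{similar lower-order terms}).
\]
Using $\|a\|_{L^1(\partial\Omega)} \leq |\Delta(P,r)|^{1/2}\|a\|_{L^2} \leq C r^{d-1}$ and integrating $d\sigma(z)$ over $t \geq Cr$ (the extra $t^{-\alpha}$ giving convergence of the integral $\int_{Cr}^\infty t^{-1-\alpha}\,dt \sim r^{-\alpha}$), the powers of $r$ cancel and the far contribution is again bounded by an absolute constant. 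The decay factor $(1+t\,m(P,V))^{-k}$ is harmless for convergence but is what makes the estimate uniform and compatible with the $m(x,V)$-weighted norms.

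\textbf{Main obstacle.} The delicate point is the far estimate: one needs the cancellation of the atom (or an averaged version of it) to upgrade the raw bound $|x-y|^{1-d}$ — which is \emph{not} integrable over $\partial\Omega$ in $d \geq 3$ against $\|a\|_{L^1}$ alone — into an integrable bound $|x-y|^{1-d-\alpha}r^\alpha$. Here the atom is only assumed to satisfy a gradient normalization, not an explicit mean-zero condition, so I would need to combine a Poincaré inequality on the support with the Hölder regularity of $\nabla_x\Gamma(x,\cdot)$ established via Theorem \ref{Holer-estimate} and the size estimates in \eqref{f-solution-2}. Keeping track of the contributions of the potential term $V u$ (equivalently the $m(x,V)$-weighted pieces, e.g. via \eqref{estimate-Rellich-mu4} and Lemma \ref{Fefferman-Phong}) throughout both regions, and checking that they too produce only absolute constants, is the bookkeeping that requires care but follows the template of Dahlberg--Kenig \cite{Dahlberg-Kenig-1987} and Shen \cite{Shen-1994-IUMJ}. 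I would then conclude \eqref{r-estimate-1<p<2-2} by adding the two contributions.
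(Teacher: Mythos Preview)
Your local estimate is correct and matches the paper's argument exactly (Cauchy--Schwarz, the $L^2$ regularity estimate, Poincar\'e on $\Delta(P,r)$, and \eqref{estimate-m1} to absorb the $m(x,V)$-weighted term).

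The far estimate, however, has a genuine gap. You propose to bound $(\nabla u)^*(z)$ \emph{pointwise} for $|z-P|=t\gg r$ via the Poisson (or layer potential) representation, using H\"older continuity of $\nabla_x\Gamma(x,\cdot)$ in $y$ together with ``cancellation'' of the atom. Two problems: first, the atom here carries no mean-zero condition, and subtracting $\bar a=\fint a$ does not help because the piece $\bar a\,\chi_{\Delta(P,r)}$ still produces a kernel contribution of size $t^{1-d}$, which is not integrable on $\partial\Omega$; second, the H\"older-in-$y$ bound you invoke for $\nabla_x\partial_{\nu_y}G(x,y)$ is a \emph{second} derivative estimate on the Green function up to a Lipschitz boundary, which is neither established in Section~\ref{section-3} nor generally available (Theorem~\ref{Holer-estimate} gives H\"older regularity of solutions, not of their gradients). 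Even granting your displayed bound, the arithmetic does not close: Poincar\'e gives $\|a\|_{L^1}\le Cr$ (not $Cr^{d-1}$), and then $\int_{t\ge Cr} r^{\alpha}t^{-(d-1+\alpha)}\cdot r\cdot t^{d-2}\,dt\sim r$, not a constant.

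The paper avoids pointwise control of $\nabla u$ altogether. It first bounds $|u(x)|$ for $x\in\overline\Omega\setminus B(P,2r)$ via the Poisson formula and the $L^2$ estimate for $\nabla_y G(x,\cdot)$ (only first derivatives of $G$, controlled by Caccioppoli and the size bound $|G|\le C|x-y|^{2-d}$), obtaining $|u(x)|\le C r^\beta|x-P|^{2-d-\beta}$. It then applies the $L^2$ regularity solvability to the truncated domain $\Omega_s=\Omega\setminus B(P,sR)$, where $u=0$ on $\partial\Omega\cap\partial\Omega_s$ and is controlled on $\partial B(P,sR)\cap\Omega$; integrating the resulting boundary estimate in $s\in[1,2]$ converts it to a solid integral of $|\nabla u|^2+|u\,m(\cdot,V)|^2$ over the annulus, which Caccioppoli and Lemma~\ref{Fefferman-Phong} reduce to $\sup|u|^2$. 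This yields $\int_{\partial(\Omega\setminus B(P,R))}|(\nabla u)^*|^2\,d\sigma\le Cr^{2\beta}R^{-(d-1+2\beta)}$, and summing over dyadic $R=2^jr$ gives the far contribution bounded by $C\sum_j 2^{-j\beta}$. The key idea you are missing is precisely this: trade pointwise $\nabla u$ bounds for pointwise $u$ bounds, and recover $(\nabla u)^*$ through the $L^2$ machinery on subdomains plus the slicing-in-$s$ averaging trick.
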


\begin{proof}
Let $a$ be an atom supported in $\Delta(y_0,r)$ with $y_0\in \partial\Omega$.  It follows from H\"older's inequality, we have
\begin{equation} \label{6.12}
\begin{aligned}
  \int_{\Delta(y_0,10r)}|(\nabla u)^*|\,d\sigma
  &\le Cr^{(d-1)/2}\bigg(\int_{\partial\Omega}|(\nabla u)^*|^2\,d\sigma\bigg)^{1/2}\\
  &\leq Cr^{(d-1)/2}\left\{\int_{\partial\Omega}|\nabla_{tan} a|^2 d\sigma +\int_{\partial\Omega}|a|^2m(x,V)^2 \,d\sigma(x)\right\}^{1/2}
\end{aligned}
\end{equation}
where $L^2$ estimate was used. Using a covering argument and Poincar\'e inequality as  well as \eqref{estimate-m1}, we have
$$\int_{\partial\Omega}|a|^2m(x,V)^2 \,d\sigma(x)\leq C\int_{\partial\Omega}|\nabla_{tan} a|^2 d\sigma.$$
Plugging it into \eqref{6.12} gives
$$\int_{\Delta(y_0,10r)}|(\nabla u)^*|\,d\sigma
  \le C$$
where $\|\nabla_{tan}a\|_{L^2(\partial\Omega)}\leq |\Delta(y_0,r)|^{-1/2}$ was used.

In view of the Poisson representation formula, we have
\begin{align} \label{6.12-1}
u(x)&=-\int_{\partial\Omega} \frac{\partial}{\partial \nu}G(x,y) a(y) d\sigma(y).
\end{align}
It follows from H\"oler's inequality that for $x\in \overline{\Omega} \backslash B(y_0, 2r)$,
\begin{equation}\label{6.17}
\begin{aligned}
  |u(x)|&\leq C\|a\|_{L^2(\partial\Omega)}\left(\int_{\Delta(y_0,r)} |\nabla_y G(x,y)|^2 d\sigma(y)\right)^{1/2}\\
  &\leq C\|a\|_{L^2(\partial\Omega)}r^{\frac{d-3}{2}}\sup_{y\in D(y_0,3r)}|G(x,y)| \leq\frac{Cr^{\beta}}{|x-y_0|^{d-2+\beta}}
\end{aligned}
\end{equation}
where $\beta\in(0,1)$. Next we will show that  there exist $C,c>0$ such that if $R\geq cr$,
\begin{equation}\label{6.15}
  \int_{\partial (\Omega\backslash B(y_0, R))}|(\nabla u)^*|^2\,d\sigma\leq \frac{C r^{2\beta}}{R^{d-1+2\beta}}.
\end{equation}
Assume \eqref{6.15} for a moment. For any fix $j\geq2$, there exist $R_j=2^{j-1}r$ such that
$$\Delta (y_0,2^{j+1}r)\backslash\Delta (y_0,2^{j}r)\subset \partial (\Omega\backslash B(y_0, R_j))$$
and by H\"older's inequality
\begin{equation}\label{6.5}
\begin{aligned}
  \int_{2^j r\leq|y-y_0|<2^{j+1} r}(\nabla u)^*\,d\sigma
  &\le C(2^jr)^{(d-1)/2}\bigg(\int_{2^j r\leq|y-y_0|<2^{j+1} r}|(\nabla u)^*|^2\,dx\bigg)^{1/2}\\
  &\le C(2^jr)^{(d-1)/2}\bigg(\int_{\partial (\Omega\backslash B(y_0, R_j))} |(\nabla u)^*|^2\,dx\bigg)^{1/2}\\&
  \leq C\left(\frac {2^jr}{R_j}\right)^{(d-1)/2}\left(\frac r{R_j}\right)^\beta \leq C2^{-j\beta}.
\end{aligned}
\end{equation}
Then \eqref{r-estimate-1<p<2-2} follows from \eqref{6.5} and \eqref{6.12} by summation.

To prove \eqref{6.15}, let $\Omega_{s}=\Omega\backslash B(y_0, sR)$ with $s\in[1,2]$.
By the $L^2$ solvability we have
\begin{equation}\label{6.14}
\begin{aligned}
  \int_{\partial \Omega_{s}}|(\nabla u)^*|^2\,d\sigma &\leq C\int_{\partial \Omega_{s}}(|\nabla_{tan}u|^2+|um(x,V)|^2)\,d\sigma\\
  &= C\int_{\partial B(y_0, sR)\cap\Omega}(|\nabla_{tan}u|^2+|um(x,V)|^2)\,d\sigma.
\end{aligned}
\end{equation}
We integrate both sides of \eqref{6.14} in $s\in[1,2]$ to obtain
\begin{equation}\label{6.6}
\begin{aligned}
  \int_{\partial (\Omega\backslash B(y_0, R))}|(\nabla u)^*|^2\,d\sigma &\leq \frac CR\int_{D(y_0, 2R)\backslash D(y_0, R)}(|\nabla u|^2+|um(x,V)|^2)\,dx\\
  &\leq CR^{d-3}\sup_{D(y_0, 2.1R)\backslash D(y_0, 0.9R)}|u|^2\\
  &\leq \frac{C r^{2\beta}}{R^{d-1+2\beta}}
\end{aligned}
\end{equation}
where Caccioppoli's inequality and a covering argument as well as Lemma \ref{Fefferman-Phong} were used for the second inequality and \eqref{6.17} in the last inequality. Thus we complete the proof.
\end{proof}

To proceed, we need an interpolation result to handle the case $1< p<2$, as follows.
\begin{thm}\label{interpolation}
Suppose $B$ is a sublinear
operator of weak-types $(\mathcal{H}^1, 1)$ and $(2, 2)$ with weak-type norms $M_1$ and $M_2$.
If $1<p <2$, then $B$ is defined on $L^p(\partial\Omega)$ and
$$
\|Bf\|_{L^p(\partial\Omega)}\leq M\|f\|_{L^p(\partial\Omega)},
$$
where $M$ depends on $M_1$ and $M_2$ but is independent of $f\in L^p(\partial\Omega)$.
\end{thm}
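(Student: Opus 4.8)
The plan is to run the classical Calder\'on--Zygmund decomposition / distribution-function argument for interpolation between a Hardy space and $L^2$, adapted to the space of homogeneous type $\partial\Omega$. By positive homogeneity of $B$ we may assume $\|f\|_{L^p(\partial\Omega)}=1$, and it suffices to control $\|Bf\|_{L^p}^p=p\int_0^\infty \alpha^{p-1}\,\bigl|\{|Bf|>\alpha\}\bigr|\,d\alpha$. For each fixed height $\alpha>0$ I would apply the Calder\'on--Zygmund decomposition of $f$ at level $\alpha$: writing $E_\alpha=\{x\in\partial\Omega:\mathcal{M}f(x)>\alpha\}$ for the Hardy--Littlewood maximal function on $\partial\Omega$ and choosing a Whitney-type family of surface balls $\{\Delta_j\}$ with $\bigcup_j\Delta_j=E_\alpha$, bounded overlap, and $\sum_j|\Delta_j|\le C|E_\alpha|$, one obtains $f=g_\alpha+b_\alpha$ with $b_\alpha=\sum_j b_{\alpha,j}$, where $g_\alpha=f$ on $\partial\Omega\setminus E_\alpha$ and $\|g_\alpha\|_{L^\infty}\le C\alpha$, so that $\|g_\alpha\|_{L^2}^2\le \int_{\{|f|\le\alpha\}}|f|^2\,d\sigma+C\alpha^2|E_\alpha|$; and each $b_{\alpha,j}$ is a multiple of an $\mathcal{H}^1$-atom supported near $\Delta_j$ with $\|b_{\alpha,j}\|_{\mathcal{H}^1}\le C\alpha|\Delta_j|$, whence $\|b_\alpha\|_{\mathcal{H}^1}\le C\alpha\sum_j|\Delta_j|\le C\alpha|E_\alpha|$.

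Next, since $B$ is sublinear, $|Bf|\le|Bg_\alpha|+|Bb_\alpha|$, so $\bigl|\{|Bf|>\alpha\}\bigr|\le \bigl|\{|Bg_\alpha|>\alpha/2\}\bigr|+\bigl|\{|Bb_\alpha|>\alpha/2\}\bigr|$. The weak-type $(2,2)$ bound applied to $g_\alpha$ gives the first term $\le CM_2^2\,\alpha^{-2}\|g_\alpha\|_{L^2}^2$, and the weak-type $(\mathcal{H}^1,1)$ bound applied to $b_\alpha$ gives the second term $\le CM_1\,\alpha^{-1}\|b_\alpha\|_{\mathcal{H}^1}$. Inserting these into the layer-cake formula leaves two integrals, $\int_0^\infty\alpha^{p-3}\|g_\alpha\|_{L^2}^2\,d\alpha$ and $\int_0^\infty\alpha^{p-2}\|b_\alpha\|_{\mathcal{H}^1}\,d\alpha$. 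For the first I would use $\|g_\alpha\|_{L^2}^2\le\int_{\{|f|\le\alpha\}}|f|^2+C\alpha^2|E_\alpha|$ and Tonelli's theorem: the contribution of $\int_{\{|f|\le\alpha\}}|f|^2$ becomes, after interchanging the order of integration, $C_p\int_{\partial\Omega}|f|^2\cdot|f|^{p-2}\,d\sigma=C_p\|f\|_{L^p}^p$, where the inner $\alpha$-integral converges at $+\infty$ precisely because $p<2$; the contribution of $C\alpha^2|E_\alpha|$, as well as the entire bad-part integral, is dominated by $C\int_0^\infty\alpha^{p-1}|E_\alpha|\,d\alpha=\tfrac{C}{p}\|\mathcal{M}f\|_{L^p}^p\le C\|f\|_{L^p}^p$, the last inequality being the Hardy--Littlewood maximal theorem on $\partial\Omega$, which needs $p>1$. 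Collecting terms gives $\|Bf\|_{L^p}^p\le C(M_1+M_2^2)\|f\|_{L^p}^p$, as claimed; note how the two endpoint exponents of the hypotheses reappear exactly as the two borderline conditions $p<2$ and $p>1$.

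To make this rigorous one first carries out the argument for $f$ in a dense subclass on which $Bf$ is a priori well defined and the atomic series converges (e.g. bounded, compactly supported data), and then extends $B$ to all of $L^p(\partial\Omega)$ by density using the resulting estimate. The only genuinely substantive step is the Calder\'on--Zygmund decomposition producing simultaneously a good part controlled in $L^2$ and a bad part that is an $\ell^1$-summable combination of $\mathcal{H}^1$-atoms with masses comparable to $\alpha|E_\alpha|$: for the classical atomic $\mathcal{H}^1$ on a space of homogeneous type this is standard (cf. Dahlberg--Kenig \cite{Dahlberg-Kenig-1987}), whereas when $\mathcal{H}^1=\mathcal{H}^1_{1,at}(\partial\Omega)$ has atoms normalized through $\nabla_{tan}$ one must instead decompose at the level of the tangential gradient, i.e. use the corresponding Sobolev-type Calder\'on--Zygmund decomposition. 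Everything else is bookkeeping with the two weak-type inequalities and Tonelli's theorem.
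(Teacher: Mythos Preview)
The paper does not prove this statement; it simply refers the reader to Coifman--Weiss \cite{Coifman-Weiss-1977}. Your sketch is precisely the standard Calder\'on--Zygmund/Marcinkiewicz argument behind that reference, so in substance you and the paper agree.

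One refinement worth flagging: the line ``each $b_{\alpha,j}$ is a multiple of an $\mathcal{H}^1$-atom with $\|b_{\alpha,j}\|_{\mathcal{H}^1}\le C\alpha|\Delta_j|$'' is slightly glib. The Whitney pieces of the bad part carry only $L^1$ control, $\|b_{\alpha,j}\|_{L^1}\le C\alpha|\Delta_j|$, and a compactly supported mean-zero $L^1$ function need not lie in $\mathcal{H}^1$ with comparable norm. What is actually established (and what Coifman--Weiss prove) is that the \emph{total} bad part satisfies $\|b_\alpha\|_{\mathcal{H}^1}\le C\alpha|E_\alpha|$; this uses either the real-interpolation identity $(\mathcal{H}^1,L^2)_{\theta,p}=L^p$ or a further atomic decomposition of each $b_{\alpha,j}$. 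You already isolate this as ``the only genuinely substantive step'' and cite the literature for it, so this is not a gap in your outline---just a place where the one-line summary hides real work. Your closing remark that for the Sobolev-type space $\mathcal{H}^1_{1,at}(\partial\Omega)$ the decomposition must be carried out at the level of $\nabla_{tan}$ is also on target for how the theorem is actually applied later in the paper.
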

\begin{proof}
See \cite{Coifman-Weiss-1977}.
\end{proof}

\begin{proof}[Proof of Theorem \ref{main-thm-Lp-r} in the case of $1<p<2$]
It follows directly from Theorem \ref{L2} and $L^2$ estimates, by using Theorem \ref{interpolation}.
\end{proof}

\subsection{\texorpdfstring{$L^p$}{} estimates for \texorpdfstring{$2< p<2+\e$}{} }

We use the real variable argument developed by Shen \cite{Shen-2007} to treat the proof of Theorem \ref{main-thm-Lp-r} for the case $2< p<2+\e$.

\begin{thm}\label{real variable method}
Let $E\subset \mathbb{R}^d$ be bounded Lipschitz domain and $F\in L^2(E)$. Let $p>2$ and $f\in L^q(E)$ for some $2<q<p$. Suppose that for each ball $B$ with $|B|\leqslant \beta |E|$, there exist two measurable functions $F_B$, $R_B$ on $2B$ such that $|F|\leqslant |F_B|+|R_B|$ on $2B\cap\Omega$,
\begin{equation}\label{rvm2-1}
  \left\{\fint_{2B\cap E}|R_B|^pdx\right\}^{\frac{1}{p}}\leqslant C_1\left\{\left(\fint_{\alpha B\cap E}|F|^2dx\right)^{\frac{1}{2}} +\sup_{B\subset B^{\prime}}\left(\fint_{B^{\prime}\cap E}|f|^2dx\right)^{\frac{1}{2}}\right\}
\end{equation}
and
\begin{equation}\label{rvm2-2}
\fint_{2B\cap E}|F_B|^2dx
\leqslant C_2\sup_{B\subset
B^{\prime}}\fint_{B^{\prime}}|f|^2dx +\sigma \fint_{\alpha
B}|F|^2dx
\end{equation}
where $C_1, C_2>0$ and $0<\beta<1<\alpha$.
 Then, if  $0 \leqslant \sigma<\sigma_0=\sigma_0(C_1,C_2,d,p,q,\alpha, \beta)$, we have
\begin{equation}\label{real result}
\left\{\fint_{E}|F|^qdx\right\}^{\frac{1}{q}}\leqslant
C\left\{\left(\fint_{E}|F|^2dx\right)^{\frac{1}{2}}
+\left(\fint_{E}|f|^qdx\right)^{\frac{1}{q}}\right\},
\end{equation}
where $C>0$ depends only on $C_1,C_2,n,p,q, \alpha, \beta$.
\end{thm}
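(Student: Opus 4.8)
The plan is to prove this self-improvement statement by the real-variable scheme of \cite{Shen-2007}: pass from $|F|^2$ to a localized Hardy--Littlewood maximal function, decompose its super-level sets by a Calder\'on--Zygmund covering, extract a good-$\lambda$ inequality from the two hypotheses \eqref{rvm2-1}--\eqref{rvm2-2}, and close by an absorption argument that uses both $2<q<p$ and the smallness of $\sigma$. First I would normalize: replacing $F$ and $f$ by $F/I$ and $f/I$ with $I=\big(\fint_E|F|^2\,dx\big)^{1/2}+\big(\fint_E|f|^q\,dx\big)^{1/q}$, it suffices to prove $\fint_E|F|^q\,dx\le C$ under $\fint_E|F|^2\le1$ and $\fint_E|f|^q\le1$. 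I would then introduce the localized maximal operator $\mathcal{M}(h)(x)=\sup\{\fint_{B\cap E}|h|\,dy:\ x\in B,\ |B|\le\beta|E|\}$ for $x\in E$; by Lebesgue differentiation $|F(x)|^2\le\mathcal{M}(|F|^2)(x)$ a.e., so by the layer-cake formula the goal reduces to bounding $\int_0^\infty\lambda^{q/2-1}\,|\{x\in E:\mathcal{M}(|F|^2)(x)>\lambda\}|\,d\lambda$. The range $0<\lambda\le\lambda_0$ for a fixed $\lambda_0\sim1$ contributes at most $C\lambda_0^{q/2}|E|$, so the work lies in $\lambda>\lambda_0$. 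To legitimize the final absorption I would run the whole argument with $\mathcal{M}(|F|^2)$ replaced by its truncation $\min\{\mathcal{M}(|F|^2),k\}$, get bounds independent of $k$, and let $k\to\infty$.

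For $\lambda>\lambda_0$ the set $O_\lambda=\{x\in E:\mathcal{M}(|F|^2)(x)>\lambda\}$ is open and a proper subset of $E$, and I would apply a Calder\'on--Zygmund (Whitney/Vitali) decomposition to produce balls $B_i$ of bounded overlap with $O_\lambda\subset\bigcup_i(2B_i\cap E)$, $|B_i|\le\beta|E|$, a stopping-time bound $\fint_{\alpha B_i\cap E}|F|^2\le C\lambda$, and $\sum_i|B_i|\le C\lambda^{-1}|O_{c\lambda}|$ for a small $c>0$. On each $2B_i\cap E$ I would write $|F|\le|F_{B_i}|+|R_{B_i}|$ as in the hypothesis. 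For $x\in B_i\cap E$, splitting the balls that define $\mathcal{M}(|F|^2)(x)$ into those contained in $2B_i$ and the larger ones---for the latter the stopping-time property forces $\fint_{B\cap E}|F|^2\le C\lambda$---one gets $\mathcal{M}(|F|^2)(x)\le C\lambda+C\mathcal{M}(|F_{B_i}|^2\chi_{2B_i})(x)+C\mathcal{M}(|R_{B_i}|^2\chi_{2B_i})(x)$. Applying the weak $(1,1)$ bound for $\mathcal{M}$ to the $F_{B_i}$-term, with \eqref{rvm2-2} and $\sup_{B_i\subset B'}\fint_{B'}|f|^2\le\gamma\lambda$ (valid on the good set), and the weak $(p/2,p/2)$ bound to the $R_{B_i}$-term, with \eqref{rvm2-1} and $\fint_{\alpha B_i\cap E}|F|^2\le C\lambda$, and then summing over $i$ using the bounded overlap, I expect the good-$\lambda$ inequality
\[
\big|\{x\in E:\mathcal{M}(|F|^2)(x)>\Lambda\lambda,\ \mathcal{M}(|f|^2)(x)\le\gamma\lambda\}\big|\le C\Big(\tfrac{\sigma+\gamma}{\Lambda}+\tfrac{1}{\Lambda^{p/2}}\Big)\,|O_{c\lambda}|
\]
for all $\Lambda>1$ and $\gamma>0$.

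Multiplying by $\lambda^{q/2-1}$, integrating over $\lambda>\lambda_0$, and rescaling $\lambda$, the $\{\mathcal{M}(|f|^2)>\gamma\lambda\}$ contribution becomes $\le C\gamma^{-q/2}\fint_E(\mathcal{M}(|f|^2))^{q/2}\le C\gamma^{-q/2}\fint_E|f|^q\le C\gamma^{-q/2}$ by the Hardy--Littlewood maximal theorem (here $q/2>1$ is essential), while the principal term carries a factor $\Lambda^{q/2}\big(\tfrac{C(\sigma+\gamma)}{\Lambda}+\tfrac{C}{\Lambda^{p/2}}\big)$ multiplying $\fint_E(\mathcal{M}(|F|^2))^{q/2}$. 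Since $q<p$, I would first choose $\Lambda$ so large that $C\Lambda^{(q-p)/2}<\tfrac14$, then $\gamma$ and $\sigma_0$ so small that $C(\sigma+\gamma)\Lambda^{q/2-1}<\tfrac14$ whenever $0\le\sigma<\sigma_0$; with the truncation in force this makes the coefficient of $\fint_E(\mathcal{M}(|F|^2))^{q/2}$ on the right at most $\tfrac12$, which can be absorbed into the left side, yielding $\fint_E|F|^q\le\fint_E(\mathcal{M}(|F|^2))^{q/2}\le C$ and hence \eqref{real result}. I expect the genuine difficulties to be: (i) setting up the Calder\'on--Zygmund decomposition of the level set of the \emph{localized} operator $\mathcal{M}$ so that every enlarged ball used above remains admissible ($|B|\le\beta|E|$) while still seeing $E\setminus O_\lambda$ nearby---this is what pins down $\lambda_0$ and uses the bounded Lipschitz character of $E$ through the relation $|B\cap E|\sim|B|$ for the relevant balls; and (ii) the bookkeeping that isolates the tail of $\mathcal{M}(|F|^2)(x)$ coming from balls not contained in $2B_i$, so that only the splitting $|F|\le|F_{B_i}|+|R_{B_i}|$ on $2B_i$ is needed. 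Once these are settled, the choice of constants ($\Lambda$ large from $q<p$, then $\sigma_0,\gamma$ small) and the absorption are routine.
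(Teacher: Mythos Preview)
Your proposal is correct and follows precisely the Calder\'on--Zygmund/good-$\lambda$ scheme of Shen \cite{Shen-2007,Shen-book}; note that the paper itself does not prove this theorem but simply cites \cite[Theorem~4.2.3]{Shen-book}, so your sketch is in fact considerably more detailed than what appears in the paper. The points you flag as potential difficulties---the localized maximal operator tied to admissible balls $|B|\le\beta|E|$, the threshold $\lambda_0$, and the tail bookkeeping for balls not contained in $2B_i$---are exactly the technical issues handled in the cited reference, and your treatment of them is accurate.
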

\begin{proof}
  See \cite[Theorem 4.2.3]{Shen-book}.
\end{proof}

We are ready to give the proof of Theorem \ref{main-thm-Lp-r} in the case of $2<p<2+\e$.
\begin{proof}[Proof of Theorem \ref{main-thm-Lp-r} in the case of $2<p<2+\e$]
Recall that, for $R>0$ large sufficiently,
$$\Omega_R=\{(x',x_d)\in\R^d: |x'|<R\text{ and }\psi(x')<x_d<\psi(x')+R\}. $$
 It suffices for us to show for $2<p<2+\e$
 \begin{equation}\label{5.12}
    \|(\nabla u)^*\|_{L^p(\partial\Omega\cap\partial\Omega_R)}\leq C\|\nabla_{tan}g_R\|_{L^p(\partial\Omega\cap\partial\Omega_R)}+ C\|g_Rm(x,V)\|_{L^p(\partial\Omega\cap\partial\Omega_R)}
  \end{equation}
  where \begin{align*}
  g_R(x)=\begin{cases}
    g(x),\quad& x\in\partial \Omega\cap\partial \Omega_R,\\[0.1cm]
    0, &\text{otherwise}.
  \end{cases}
\end{align*}
Fix $R$ and a ball $B(x_0,r)$ in $\partial\Omega$. Choose $\varphi \in C_{0}^{\infty}(B(x_0,8r))$ such that
$$
\varphi(x)= \begin{cases}1, & x\in B(x_0,4r) ,\\[0.1cm]
 0, & x\notin B(x_0,8r).\end{cases}
$$
Let $h=\fint_{\Delta(x_0,8r)}g_R\,d\sigma$ and write $u=v+w-h$ where $v$ is a weak solution of $\mathcal{L} v+V v =0$ with boundary data $v=(g_R-h)\varphi$ on $\partial\Omega\cap\partial \Omega_R$. To proceed, we apply Theorem \ref{real variable method} with $F=(\nabla u)^*$, $F_B=(\nabla v)^*$ and $R_B=(\nabla w)^*$. Let $\Delta'(x,r)=\Delta(x,r)\cap\partial \Omega_R$. It can be verified that $F\leq F_B+R_B$ and by $L^2$ solvability of the regularity problem,
\begin{align*}
&\int_{\Delta'(x_0,r)}|F_B|^2\,d\sigma\leq \int_{\partial\Omega\cap\partial\Omega_R}|(\nabla v)^*|^2\,d\sigma\\
&\leq \int_{\partial\Omega\cap\partial\Omega_R}|\nabla_{tan} [(g_R-h)\varphi]|^2\,d\sigma+ \int_{\partial\Omega\cap\partial\Omega_R}|(g_R-h)\varphi|^2m(x,V)^2\,d\sigma\\
&\leq C\int_{\Delta'(x_0,8r)}|\nabla_{tan}g_R|^2\,d\sigma+ C\int_{\Delta'(x_0,8r)}|g_R|^2m(x,V)^2\,d\sigma,
\end{align*}
where Poincar\'e inequality and Lemma \ref{lemma-m-1} were used. To estimate $R_B$, we note that $w=(g_R-h)(1-\varphi)$ on $\partial\Omega\cap\partial\Omega_R$, that is, $w=0$ on $\Delta'(x_0,4r)$. We assume that for some $q>2$,
\begin{equation}\label{rh2}
  \left\{\fint_{\Delta'(x_0,r)}|(\nabla w)^*|^qd\sigma\right\}^{1/q}\leq C\left\{\fint_{\Delta'(x_0,4r)}|(\nabla w)^*|^2d\sigma\right\}^{1/2}
\end{equation}
for a moment. This gives
\begin{align*}
  &\left(\fint_{\Delta'(x_0,r)} |R_B|^q d\sigma\right)^{1/q}\leq C\left(\fint_{\Delta'(x_0,4r)} |R_B|^2 d\sigma\right)^{1/2}\\
  &\leq C\left(\fint_{\Delta'(x_0,4r)} |(\nabla u)^*|^2 d\sigma\right)^{1/2}+C\left(\fint_{\Delta'(x_0,8r)} \left(|\nabla_{tan}g_R|^2+|g_R|^2m(x,V)^2 \right)d\sigma\right)^{1/2}.
\end{align*}
Thus, by Theorem \ref{real variable method}, we obtain \eqref{5.12} for any $2<p<q$.

Next to see \eqref{rh2}, we use $L^2$ estimate for regularity problem to get
\begin{align*}
  &\left(\fint_{\Delta'(x_0,r)} |(\nabla w)^*|^2 d\sigma\right)^{1/2}\leq C\left(\fint_{D(x_0,3r)\cap\Omega_R} |\nabla w|^2 dx\right)^{1/2}\\
  &\leq \frac Cr\left(\fint_{D(x_0,3.1r)\cap\Omega_R} \big|w-\fint_{D(x_0,4r)\cap\Omega_R}w\big|^2 dx\right)^{1/2}\\
  &\leq  C\left(\fint_{D(x_0,4r)\cap\Omega_R} |\nabla w|^{\frac{2d}{d+2}} dx\right)^{\frac{d+2}{2d}} \leq C\left(\fint_{\Delta'(x_0,4r)} |(\nabla w)^*|^{\frac{2d}{d+2}} d\sigma\right)^{\frac{d+2}{2d}}
\end{align*}
where we have used Cacciopoli's inequality and Sobolev-Poincar\'e inequality. This, together with the self-improving property, gives \eqref{rh2} and completes the proof.
\end{proof}

\section{\bf \texorpdfstring{$W^{1,p}$}{W1p} estimate}\label{section-7}

In this section, we are going to prove Theorem \ref{main-thm-W1p} holds. We use a real variable method and a duality argument to prove $\|\nabla u\|_{L^p(\Omega)}\leq C\|f\|_{L^p(\Omega)}$ with $G=0$ for the cases $2<p<3+\e$ and $\frac32-\e<p<2$ respectively. Again by a duality argument, $\|\nabla u\|_{L^p(\Omega)}\leq C\|f\|_{L^p(\Omega)}+C\|G\|_{B^{-1/p,p}(\partial\Omega)}$ for $\frac32-\e<p<3+\e$. Also estimates for the Neumann function are used to obtain $\|V^{\frac12} u\|_{L^p(\Omega)}\leq C\|f\|_{L^p(\Omega)}+C\|G\|_{B^{-1/p,p}(\partial\Omega)}$.

\subsection{\texorpdfstring{$W^{1,p}$}{W1p} estimate for \texorpdfstring{$2<p<3+\e$}{}}

\begin{thm}\label{thm-W1p-1}
  Suppose $A$ satisfies \eqref{ellipticity}-\eqref{Dini} and $V(x)>0$ a.e. satisfies \eqref{V}. Let $f\in L^{p}(\Omega)$. Then there exists $\e>0$ so that if $2<p<3+\e$, the weak solutions to \eqref{W1p} with $G=0$ satisfies the estimate
$$\|\nabla u\|_{L^p(\Omega)}\leq C\|f\|_{L^p(\Omega)},$$
where $C$ depends only on $\mu$, $d$, $p$ and the Lipschitz character of $\Omega$.
\end{thm}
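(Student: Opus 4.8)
The plan is to follow the real-variable machinery of Shen (the same Theorem \ref{real variable method} used in the $L^p$ regularity estimate) to upgrade the $L^2$ bound $\|\nabla u\|_{L^2(\Omega)}\leq C\|f\|_{L^2(\Omega)}$ — which follows from the energy identity together with the Fefferman--Phong inequality (Lemma \ref{Fefferman-Phong}) — to the range $2<p<3+\e$. First I would reduce to the localized statement on $\Omega_R$ and pass to the limit $R\to\infty$, exactly as in the proof of the $L^p$ regularity estimate, so that Theorem \ref{real variable method} applies on the bounded Lipschitz domain $E=\Omega_R$. Given a surface ball $\Delta(x_0,r)$ (with $r$ small relative to $\mathrm{diam}\,\Omega_R$), I would fix a cutoff $\varphi\in C_0^\infty(B(x_0,8r))$ with $\varphi\equiv1$ on $B(x_0,4r)$, and split $u=v+w$ where $v$ solves $\mathcal{L}v+Vv=\mathrm{div}(\varphi f)$ with $\tfrac{\partial v}{\partial\nu}=-\varphi f\cdot n$ (Neumann data supported near $x_0$) and $w=u-v$ solves the homogeneous equation $\mathcal{L}w+Vw=0$ with homogeneous conormal data on $\Delta'(x_0,4r)$. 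Then set $F=|\nabla u|$, $F_B=|\nabla v|$, $R_B=|\nabla w|$. The hypothesis \eqref{rvm2-2} for $F_B$ is the easy part: it is the $L^2$ estimate applied to $v$, giving $\fint_{2B\cap\Omega_R}|\nabla v|^2\leq C\fint_{\alpha B\cap\Omega_R}|f|^2$, with no $\sigma$-term needed (so in fact $\sigma=0$).

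The substance is the reverse Hölder inequality \eqref{rvm2-1} for $R_B=|\nabla w|$, i.e. the claim that
\begin{equation}\label{rh-w-plan}
\left(\fint_{D(x_0,r)}|\nabla w|^p\,dx\right)^{1/p}\leq C\left(\fint_{D(x_0,2r)}|\nabla w|^2\,dx\right)^{1/2}
\end{equation}
for $2<p<3+\tfrac{\alpha}{1-\alpha}$, where $w$ is a weak solution of $\mathcal{L}w+Vw=0$ in $D(x_0,2r)$ with $\tfrac{\partial w}{\partial\nu}=0$ on $\Delta(x_0,2r)$ (the interior case, where $B(x_0,2r)\subset\Omega$, being handled the same way without the boundary piece). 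Following Geng \cite{Geng-2012} and Shen \cite{Shen-2007}, I would write $|\nabla w|^p=|\nabla w|^2\cdot|\nabla w|^{p-2}$, estimate the factor $|\nabla w|^2$ pointwise by its $L^2$ nontangential maximal function bound from Theorem \ref{main-thm-Lp-r} (the $p=2$ case), and then it remains to control $\int_{D(x_0,2r)}|\nabla w|^{p-2}\,dx$. For this the key ingredient is the De Giorgi--Nash estimate of Theorem \ref{Holer-estimate}: on balls where $w$ is $\mathcal{L}+V$-harmonic, $\nabla w$ obeys a bound by $r^{-1}$ times a suitable average of $w$ with a Hölder gain, which turns $|\nabla w|^{p-2}$ into an integrable singularity precisely when $p-2<\tfrac{\alpha}{1-\alpha}$, i.e. $p<3+\tfrac{\alpha}{1-\alpha}$. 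Combining the two factors via Hölder's inequality and the Caccioppoli/boundary-Caccioppoli inequality yields \eqref{rh-w-plan}. With \eqref{rvm2-1} and \eqref{rvm2-2} in hand, Theorem \ref{real variable method} gives $\big(\fint_{\Omega_R}|\nabla u|^p\big)^{1/p}\leq C\big(\fint_{\Omega_R}|\nabla u|^2\big)^{1/2}+C\big(\fint_{\Omega_R}|f|^p\big)^{1/p}$; inserting the $L^2$ bound on $\nabla u$ and letting $R\to\infty$ finishes the proof.

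The main obstacle is the boundary version of the reverse Hölder inequality \eqref{rh-w-plan}: one must run the argument of Theorem \ref{Holer-estimate}/Lemma \ref{pointwise estimate} near $\Delta(x_0,2r)$ with the homogeneous conormal condition, so that the De Giorgi--Nash Hölder exponent $\alpha$ and the $L^2$ nontangential maximal function estimate both remain available uniformly up to the boundary, and one must verify that the potential term $Vw$ — controlled through $V\leq Cm(x,V)^2$ and the decay factors in Proposition \ref{estimate of fundamental solution-1} — does not destroy the gain. The complementary range $\tfrac32-\e<p<2$ is then obtained by a duality argument against the adjoint problem (which has the same structure since $A=A^*$), exactly as indicated in the introduction, and is not part of this theorem.
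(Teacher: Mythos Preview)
Your proposal is correct and follows essentially the same route as the paper: the paper packages the real-variable step as Theorem \ref{thm1} (your $u=v+w$ decomposition with $F=|\nabla u|$, $F_B=|\nabla v|$, $R_B=|\nabla w|$ on $\Omega_R$) and packages the reverse H\"older inequality \eqref{rh-w-plan} as Lemma \ref{rh-2<p<3+e}, proving the latter exactly by writing $|\nabla w|^q=|\nabla w|^2|\nabla w|^{q-2}$, bounding $|\nabla w|^{q-2}$ pointwise via the De Giorgi--Nash estimate (which gives $|\nabla w(x)|\leq C(x_d-\Psi(x'))^{\alpha-1}r^{1-\alpha}(\fint|\nabla w|^2)^{1/2}$, integrable when $q<3+\tfrac{\alpha}{1-\alpha}$), and bounding the factor $|\nabla w|^2$ through the $L^2$ nontangential maximal function estimate. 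Two small points: the $L^2$ energy bound $\|\nabla u\|_{L^2}\leq C\|f\|_{L^2}$ needs only integration by parts and $V\geq 0$, not Lemma \ref{Fefferman-Phong}; and since $w$ has homogeneous \emph{conormal} data, the relevant $L^2$ nontangential estimate is the Neumann one (Theorem \ref{main-thm-Lp-n}) rather than Theorem \ref{main-thm-Lp-r}.
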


\begin{thm}\label{thm1}
  Let $p>2$. Suppose $A$ satisfy \eqref{ellipticity}-\eqref{symmetric} and $V(x)>0$ a.e. in $\mathbb{R}^d$. Assume that for any ball $B(x_0,r)$ with the property that either $x_0\in \partial\Omega$ or $B(x_0,2r)\subset\Omega$, the weak reverse H\"older inequality
  \begin{equation}
    \label{rh inequality}
    \bigg(\fint_{D(x_0,r)}|\nabla v|^p\bigg)^{\frac{1}{p}}\leq C_0\bigg(\fint_{D(x_0,2r)}|\nabla v|^2\bigg)^{\frac{1}{2}}
  \end{equation}
  holds, whenever $v\in W^{1,2}(D(x_0,2r))$ with $x_0\in\partial\Omega$ satisfies
  \begin{equation}\label{Equation v2}
   \mathcal{L} v+V v =0 \ \text{ in } D(x_0,2r)\quad\text{and}\quad
   \frac{\partial v}{\partial \nu}  =0 \ \text{ on } \Delta(x_0,2r).
  \end{equation}
  Let $u\in W^{1,2}(\Omega)$ be a weak solution of \eqref{W1p} with $f\in L^p(\Omega,\mathbb{R}^d)$ and $G=0$. Then $u\in W^{1,p}(\Omega)$ and
  \begin{equation}
    \label{result1}
    \|\nabla u\|_{L^p(\Omega)}\leq C\|f\|_{L^p(\Omega)},
  \end{equation}
  where $C$ depends only on $d,p,\mu,C_0$ and the Lipschitz character of $\Omega$.
\end{thm}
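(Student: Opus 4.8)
The plan is to obtain the global $W^{1,p}$ bound as a soft consequence of the local reverse H\"older hypothesis \eqref{rh inequality}, by running the real-variable argument of Theorem \ref{real variable method} on the exhausting family of bounded Lipschitz subdomains $\Omega_R$ and then letting $R\to\infty$. First I would record that \eqref{rh inequality} self-improves: by the self-improving property of weak reverse H\"older inequalities (Gehring's lemma) there exist $\delta>0$ and a constant $\tilde C_0$, depending only on $d,p,\mu,C_0$ and the Lipschitz character of $\Omega$, such that \eqref{rh inequality} holds with $p$ replaced by $\tilde p:=p+\delta$ --- in the boundary case $x_0\in\partial\Omega$ as stated, and, with the same overshoot, in the interior case $B(x_0,2r)\subset\Omega$ which is also part of the hypothesis. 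This overshoot is what lets us land exactly on the exponent $p$ at the end. I would also note the base energy estimate $\|\nabla u\|_{L^2(\Omega)}\le C\|f\|_{L^2(\Omega)}$, obtained by testing \eqref{W1p} against $u$ and using ellipticity together with $V>0$.

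Next, fix $R$ large, set $E=\Omega_R$ (whose Lipschitz character is bounded uniformly in $R$ by scale invariance), and recall that $u$ solves $\mathcal L u+Vu={\rm div}\,f$ in $\Omega\supset E$ with $\partial u/\partial\nu=-f\cdot n$ on $\partial\Omega$. For a ball $B=B(x_0,r)$ with $|B|\le\beta|E|$ I would split $u=v_B+w_B$ on $D(x_0,\alpha r)$: if $x_0\in\partial\Omega$, let $v_B$ solve the mixed problem $\mathcal L v_B+Vv_B={\rm div}\,f$ in $D(x_0,\alpha r)$, $\partial v_B/\partial\nu=-f\cdot n$ on $\Delta(x_0,\alpha r)$, $v_B=0$ on the spherical part of $\partial(B(x_0,\alpha r)\cap\Omega)$; otherwise (in particular for the balls meeting the artificial top and lateral faces of $\partial\Omega_R$, which lie in the interior of $\Omega$ once $\beta$ is small) let $v_B\in W^{1,2}_0(B(x_0,\alpha r))$ solve $\mathcal L v_B+Vv_B={\rm div}(f\chi_{\alpha B})$. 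In both cases $w_B:=u-v_B$ solves $\mathcal L w_B+Vw_B=0$ in $D(x_0,\alpha r)$ with $\partial w_B/\partial\nu=0$ on $\Delta(x_0,\alpha r)$, and $|\nabla u|\le|\nabla v_B|+|\nabla w_B|$; so I take $F=|\nabla u|$, $F_B=|\nabla v_B|$, $R_B=|\nabla w_B|$.

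Then I would verify the two hypotheses of Theorem \ref{real variable method}. Testing the equation for $v_B$ against $v_B$ gives $\int_{D(x_0,\alpha r)}|\nabla v_B|^2\le C\int_{D(x_0,\alpha r)}|f|^2$; since the relevant sets all have measure comparable to $r^d$, this yields $\fint_{2B\cap\Omega}|F_B|^2\le C\sup_{B\subset B'}\fint_{B'}|f|^2$, i.e.\ \eqref{rvm2-2} with $\sigma=0$. For $R_B$, applying the self-improved reverse H\"older inequality to $w_B$ (after, if necessary, recentering $x_0$ at a nearby point of $\partial\Omega$ and slightly enlarging the ball), together with $|\nabla w_B|\le|\nabla u|+|\nabla v_B|$ and the energy estimate just obtained, gives $\big(\fint_{2B\cap\Omega}|R_B|^{\tilde p}\big)^{1/\tilde p}\le C\big(\fint_{\alpha B\cap\Omega}|\nabla u|^2\big)^{1/2}+C\big(\fint_{\alpha B\cap\Omega}|f|^2\big)^{1/2}$, which is \eqref{rvm2-1} at exponent $\tilde p$. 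Theorem \ref{real variable method} then yields $\big(\fint_{\Omega_R}|\nabla u|^p\big)^{1/p}\le C\big(\fint_{\Omega_R}|\nabla u|^2\big)^{1/2}+C\big(\fint_{\Omega_R}|f|^p\big)^{1/p}$ with $C$ independent of $R$; multiplying through by $|\Omega_R|^{1/p}$, bounding $\|\nabla u\|_{L^2(\Omega_R)}\le\|\nabla u\|_{L^2(\Omega)}<\infty$, and using $|\Omega_R|^{1/p-1/2}\to0$ (as $p>2$), letting $R\to\infty$ produces $\|\nabla u\|_{L^p(\Omega)}\le C\|f\|_{L^p(\Omega)}$.

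The reduction above hides two points that need care. The first is the geometric bookkeeping of the local decomposition: one has to sort the balls into the boundary case and the interior case so that precisely the hypothesis \eqref{rh inequality} is invoked, check that balls touching the artificial faces of $\partial\Omega_R$ genuinely lie in the interior of $\Omega$ (forcing a smallness condition on $\beta$), and treat balls with $x_0$ close to but not on $\partial\Omega$ by recentering. The second, and the one I expect to be the main obstacle, is the uniformity in $R$ of all constants --- those from Gehring's lemma, from the $W^{1,2}$ solvability of the mixed problem on $D(x_0,\alpha r)$, and from Theorem \ref{real variable method} itself --- which ultimately rests on the fact that the Lipschitz character of $\Omega_R$ does not degenerate as $R\to\infty$; only then does the limit deliver a genuinely global estimate.
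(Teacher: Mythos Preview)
Your outline is correct and follows the same overall scheme as the paper --- apply Theorem~\ref{real variable method} on the truncated domains $\Omega_R$ with $F=|\nabla u|$ --- but you make two choices that differ from the paper's proof and are worth flagging.

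\textbf{Decomposition.} The paper splits the data rather than the domain: with a cut-off $\varphi\in C_0^\infty(8B)$, it solves $\mathcal L v+Vv=\operatorname{div}(\varphi f_R)$ and $\mathcal L w+Vw=\operatorname{div}((1-\varphi)f_R)$ \emph{globally on $\Omega_R$} with the corresponding Neumann data on all of $\partial\Omega_R$. Your decomposition instead solves a \emph{local} mixed problem on $D(x_0,\alpha r)$. Both yield $F_B$ controlled by $f$ in energy and a homogeneous local solution $w$ to which~\eqref{rh inequality} applies, so both verify \eqref{rvm2-1}--\eqref{rvm2-2}. The paper's choice avoids the mixed Dirichlet/Neumann solvability issue and the recentering you mention; your choice is the Caffarelli--Peral style and keeps the comparison genuinely local. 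Either works.

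\textbf{Passage to the limit.} The paper redefines $u$ as the solution of the Neumann problem on $\Omega_R$ with truncated data $f_R$, and then uses the scale-invariant energy bound $\fint_{\Omega_R}|\nabla u|^2\le C\fint_{\Omega_R}|f_R|^2\le C\big(\fint_{\Omega_R}|f_R|^p\big)^{2/p}$ to absorb the $L^2$ term in~\eqref{real result} directly; the ``it suffices'' reduction to $\Omega_R$ is then an approximation step. You instead keep the global $u$, restrict it to $\Omega_R$, and kill the $L^2$ average via $|\Omega_R|^{1/p-1/2}\to0$ together with the fixed finite bound $\|\nabla u\|_{L^2(\Omega)}\le C\|f\|_{L^2(\Omega)}$. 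Your route is arguably cleaner here, since it avoids defining $u_R$ and proving convergence.

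Both of the ``points that need care'' you raise are genuine but routine; the paper sidesteps the first by its global-solve decomposition, and for the second relies (as you do) on the scale invariance of the Lipschitz character of $\Omega_R$.
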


\begin{proof}
For $R$ large, let \begin{align*}
  f_R(x)=\begin{cases}
    f(x),\quad& x\in\Omega_R\text{ or }x\in\partial \Omega\cap\partial \Omega_R,\\[0.1cm]
    0, &\text{otherwise}.
  \end{cases}
\end{align*}
 It suffices for us to show \begin{equation}
    \|\nabla u\|_{L^p(\Omega_R)}\leq C\|f_R\|_{L^p(\Omega_R)}
  \end{equation}
  where $u$ satisfies
  \begin{equation}
\mathcal{L} u+V u ={\rm div} f_R\quad  \text{in } \Omega_R,\quad\text{ and }\quad\frac{\partial u}{\partial \nu}  =-f_R\cdot n \quad\text{on } \partial\Omega_R.
\end{equation}
Fix $0<r<R/8$ and $B(x,r)$ such that $|B(x,r)| \leqslant \beta|\Omega_R|$ and either $B(x,4r) \subset \Omega_R$ or $B(x,r)$ centers on $\partial \Omega\cap\partial \Omega_R$. Let $\varphi \in C_{0}^{\infty}(B(x,8r))$ be a cut-off function such that $\varphi(x)=1$ in $B(x,4r)$ and $\varphi(x)=0$ outside $B(x,8r)$. We decompose $u=v+w$, where $v, w$ satisfy
\begin{equation}\label{Equation v}
\mathcal{L}v+Vv=\operatorname{div}(\varphi f_R) \quad \text {in } \Omega_R, \quad\text{and}\quad\frac{\partial v}{\partial \nu}=-\varphi f_R \cdot n \quad \text {on } \partial \Omega_R
\end{equation}
and
\begin{equation}\label{Equation w}
\mathcal{L}w+Vw=\operatorname{div}((1-\varphi) f_R) \quad \text {in } \Omega_R, \quad\text{and}\quad\frac{\partial w}{\partial \nu}=(\varphi-1) f_R \cdot n \quad \text {on } \partial \Omega_R
\end{equation}
respectively.

We first claim that \eqref{rvm2-1} and \eqref{rvm2-2} hold for $F=|\nabla u|, F_{B}=|\nabla v|$ and $R_{B}=|\nabla w|$ and postpone the proof to the end of this Theorem. Then
note that $|F| \leqslant\left|F_{B}\right|+\left|R_{B}\right|$, applying Theorem \ref{real variable method}, we obtain
$$
\left\{ \fint_{\Omega_R}|\nabla u|^{q} d x\right\}^{\frac{1}{q}} \leqslant C\left\{\left( \fint_{\Omega_R}|\nabla u|^{2} d x\right)^{\frac{1}{2}}+\left( \fint_{\Omega_R}|f_R|^{q} d x\right)^{\frac{1}{q}}\right\}
$$
for any $2<q<p$. It follows from the self-improving property of the reverse H\"older condition that the above estimate indeed holds for any $2<q<\bar{p}$ with $\bar{p}>p$ and particularly for $q=p$,
\begin{equation}
  \label{rh1}
  \left\{ \fint_{\Omega_R}|\nabla u|^{p} d x\right\}^{\frac{1}{p}} \leqslant C\left\{\left( \fint_{\Omega_R}|\nabla u|^{2} d x\right)^{\frac{1}{2}}+\left( \fint_{\Omega_R}|f_R|^{p} d x\right)^{\frac{1}{p}}\right\}.
\end{equation}
Integrating \eqref{NP} by parts and using H\"older's inequality, we have
$$\fint_{\Omega_R}|\nabla u|^{2}\,dx\leq C\fint_{\Omega_R}|f_R |^{2}\, dx\leq C\left(\fint_{\Omega_R}|f_R |^{p}\right)^{\frac{2}{p}}.$$
where we have also used  the non-negativity of $V$. This, combining with \eqref{rh1}, gives \eqref{result1}.

To see \eqref{rvm2-2}, apply integration by parts to equation \eqref{Equation v}, then we obtain by  the non-negativity of $V(x)$,
\begin{equation}
\label{rh inequality4}
\int_{\Omega_R}|\nabla v|^{2}\,dx\leq C\int_{\Omega_R}|f_R \varphi|^{2}\, dx
\end{equation}
where H\"older's inequality was used. Let $D'(x,r)=B(x,r)\cap\Omega_R$. It follows from \eqref{rh inequality4} that
$$
 \fint_{D'(x,2r)}\left|F_{B}\right|^{2}\, d x \leqslant C\fint_{D'(x,4r)}|\nabla v|^{2}\, d x\leqslant \frac{C}{|D'(x,4r)|} \int_{\Omega_R}|\nabla v|^{2}\, d x\leqslant C \fint_{D'(x,8r)}|f_R|^{2}\, d x .
$$

Next, to verify \eqref{rvm2-1}, note that by \eqref{Equation w}, $w$ satisfies
\begin{equation}\label{Equation w2}
\mathcal{L}w+Vw=0\quad \text {in } D'(x,4r)\quad \text{ and }\quad \displaystyle\frac{\partial w}{\partial \nu}=0\quad  \text{on } \Delta(x,4r)\cap\partial\Omega_R.
\end{equation}
Thus by the reverse H\"older condition \eqref{rh inequality}, we have
$$
\left\{ \fint_{D'(x,2r)}|\nabla w|^{p} d x\right\}^{\frac{1}{p}} \leqslant C\left\{ \fint_{D'(x,4r)}|\nabla w|^{2} d x\right\}^{\frac{1}{2}}.
$$
Again use \eqref{rh inequality4}, then we obtain
$$
\begin{aligned}
\left\{ \fint_{D'(x,2r)}\left|R_{B}\right|^{p} d x\right\}^{\frac{1}{p}}&\leqslant C\left\{ \fint_{D'(x,4r)}|\nabla w|^{2} d x\right\}^{\frac{1}{2}} \\
&\leqslant C\left\{ \fint_{D'(x,4r)}|\nabla u|^{2} d x\right\}^{\frac{1}{2}} +C\left\{ \fint_{D'(x,4r)}|\nabla v|^{2} d x\right\}^{\frac{1}{2}} \\
&\leqslant C\left\{ \fint_{D'(x,4r)}|F|^{2} d x\right\}^{\frac{1}{2}} +C\left\{ \fint_{D'(x,4r)}|f_R|^{2} d x\right\}^{\frac{1}{2}}
\end{aligned}
$$
which gives \eqref{rvm2-1}. This completes the proof.
\end{proof}

Denote
$$Z_r=\left\{\left(x^{\prime}, x_{d}\right):\left|x^{\prime}\right|<r, \Psi\left(x^{\prime}\right)<x_{d}<10 \sqrt{d}(M+1) r\right\}$$
and
$$S_r=\left\{\left(x^{\prime}, \Psi\left(x^{\prime}\right)\right):\left|x^{\prime}\right|<r\right\},$$
where $\Psi: \mathbb{R}^{n-1} \rightarrow \mathbb{R}$ is a Lipschitz function with $\Psi(0)=0$ and $\|\nabla \Psi\|_{\infty}\leq M$.

\begin{lemma}\label{rh-2<p<3+e}
Suppose $A$ satisfies \eqref{ellipticity}-\eqref{Dini} and $V(x)>0$ a.e. satisfies \eqref{V}. Let $u\in W^{1,2}(Z_{2r} )$ be a weak solution to $\mathcal{L}u+Vu=0$ in $Z_{2 r}$ and $\frac{\partial u}{\partial \nu}=0$ on $S_{2 r}$. Then there exists $\e>0$,  for  $2<q<3+\e$,
\begin{equation}\label{rh inequality3}
  \left\{\frac{1}{r^{d}} \int_{Z_{r}}|\nabla u|^{q} d x\right\}^{\frac{1}{q}} \leqslant C\left\{\frac{1}{r^{d}} \int_{Z_{3r}}|\nabla u|^{2} d x\right\}^{\frac{1}{2}}.
\end{equation}
where $C$ depends only on $d, \mu$ and the Lipschitz character of $\Omega$..
\end{lemma}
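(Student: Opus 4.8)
The plan is to run the scheme sketched in the introduction: decompose $|\nabla u|^{q}=|\nabla u|^{2}\cdot|\nabla u|^{q-2}$, bound the factor $|\nabla u|^{q-2}$ \emph{pointwise} via the De Giorgi--Nash estimate of Theorem~\ref{Holer-estimate}, and absorb the factor $|\nabla u|^{2}$ into an $L^{2}$ nontangential maximal function on $S_{2r}$. By translation and dilation we may take $r=1$, and we write $\kappa$ for the right-hand side of \eqref{rh inequality3} (so that $\kappa$ is comparable to $(\fint_{Z_{2r}}|\nabla u|^{2}\,dx)^{1/2}$). A preliminary observation: every $x\in Z_{1}$ lies at distance $\gtrsim 1$ from the portion of $\partial Z_{2r}$ not contained in $S_{2r}$, so interior and boundary-Neumann regularity estimates at all scales $\lesssim 1$ are legitimate using only the equation in $Z_{2r}$ and $\frac{\partial u}{\partial\nu}=0$ on $S_{2r}$.

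\textbf{Step 1 (pointwise gradient bound).} From Lemma~\ref{pointwise estimate} in its Neumann form (the factor $(1+m(x_{0},V))^{-k}$ only helps) and Poincar\'e's inequality, the local $L^{2}$-oscillation of $u$ near $S_{2r}$ is $\le C\kappa$; Theorem~\ref{Holer-estimate} then gives $|u(x)-u(y)|\le C|x-y|^{\alpha}\kappa$ for $x,y\in Z_{1}$, in a form homogeneous in $\nabla u$ (the oscillation decay is for oscillations, and the potential contribution is swallowed using $V\le Cm(\cdot,V)^{2}$, Proposition~\ref{upper-bound-V} and the $m(\cdot,V)$-decay of $u$). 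Inserting this into Caccioppoli's inequality on $B(x,\delta(x)/4)$ with $\delta(x)=\text{dist}(x,\partial\Omega)$, and then applying the interior (resp.\ boundary-Neumann) gradient estimate for the Dini-continuous operator $\mathcal L$ from \cite{Dong-2012} — where the right-hand side $-Vu$ is handled with the help of the fundamental-solution bound Proposition~\ref{estimate of fundamental solution-1} — we arrive at
$$|\nabla u(x)|\le C\,\delta(x)^{\alpha-1}\,\kappa\qquad\text{for a.e. }x\in Z_{1}.$$

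\textbf{Step 2 (fibrewise integration) and Step 3 (the $L^{2}$ estimate).} Fixing the aperture of the cones large enough (depending only on $M$), every $x\in Z_{1}$ obeys $|\nabla u(x)|\le(\nabla u)^{*}(\pi(x))$ with $\pi(x)=(x',\Psi(x'))\in S_{2r}$. Using Step 1 only on the factor $|\nabla u|^{q-2}$,
$$\int_{Z_{1}}|\nabla u|^{q}\,dx\le C\kappa^{q-2}\int_{Z_{1}}\delta(x)^{(\alpha-1)(q-2)}\,(\nabla u)^{*}(\pi(x))^{2}\,dx;$$
integrating first along the vertical fibres $\{x'=\text{const}\}$ (on which $\delta\sim s-\Psi(x')$ and $(\nabla u)^{*}(\pi(x))$ is constant) the inner integral is $\lesssim\int_{0}^{C}t^{(\alpha-1)(q-2)}\,dt$, finite precisely when $(\alpha-1)(q-2)>-1$, i.e.\ $q<2+\tfrac1{1-\alpha}=3+\tfrac{\alpha}{1-\alpha}$ — which fixes $\e=\tfrac{\alpha}{1-\alpha}$ and explains the gain of two units over the naive exponent $\tfrac1{1-\alpha}$ (two powers of $|\nabla u|$ go into a maximal function living on the $(d-1)$-dimensional boundary). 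This leaves $\int_{Z_{1}}|\nabla u|^{q}\,dx\le C\kappa^{q-2}\int_{S_{2r}}|(\nabla u)^{*}|^{2}\,d\sigma$, and the $L^{2}$ estimate for the Neumann problem localised to $(Z_{2r},S_{2r})$ (Theorem~\ref{Rellich-lemma-5.5}, Lemma~\ref{lemma-m-1} and Section~\ref{section-4}) bounds $\int_{S_{2r}}|(\nabla u)^{*}|^{2}\,d\sigma$ by $C\int_{S_{2r}}(|\nabla_{tan}u|^{2}+|u\,m(\cdot,V)|^{2})\,d\sigma$; integrating this over a one-parameter family of surfaces as in \eqref{6.14}--\eqref{6.6} converts it into $\tfrac{C}{r}\int_{Z_{2r}}(|\nabla u|^{2}+|u\,m(\cdot,V)|^{2})\,dx$, and Lemma~\ref{Fefferman-Phong} (with $V\le Cm(\cdot,V)^{2}$) absorbs the potential term, yielding $\le C\kappa^{2}$. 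Altogether $\int_{Z_{1}}|\nabla u|^{q}\,dx\le C\kappa^{q}$, which is \eqref{rh inequality3}; rescaling completes the proof.

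The hard part is Step~1: upgrading the mere $C^{\alpha}$-regularity of $u$ to the pointwise bound $|\nabla u(x)|\le C\delta(x)^{\alpha-1}\kappa$ \emph{uniformly up to} $S_{2r}$ and in a form homogeneous in $\nabla u$, the difficulty being that the lower-order term $Vu$ is only in $L^{\infty}_{\mathrm{loc}}$ and not small, so the Dini/Schauder-type gradient estimate must be coupled with the $m(\cdot,V)$-decay of $u$ near the boundary (in the spirit of \cite{Shen-1999}) to keep the $V$-contribution strictly subordinate. Once this — together with the Rellich estimates of Section~\ref{section-4} and the fundamental-solution bounds of Section~\ref{section-3} — is in place, the remaining steps are routine bookkeeping.
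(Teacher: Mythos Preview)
Your approach coincides with the paper's: split $|\nabla u|^{q}=|\nabla u|^{2}\,|\nabla u|^{q-2}$, bound $|\nabla u|^{q-2}$ pointwise by $C\delta(x)^{(\alpha-1)(q-2)}\kappa^{q-2}$ via the H\"older estimate, dominate $|\nabla u|^{2}$ by a vertical nontangential maximal function on $S_{r}$, integrate along fibres (producing the constraint $(q-2)(\alpha-1)>-1$, i.e.\ $q<3+\tfrac{\alpha}{1-\alpha}$), and finally convert $\int_{S_{r}}|(\nabla u)^{*}|^{2}\,d\sigma$ into a volume $L^{2}$-norm of $\nabla u$ by applying the $L^{2}$ boundary theory on the bounded Lipschitz domains $Z_{tr}$ and integrating in $t\in[1,2]$.

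There is one slip in Step~3. You bound $\int|(\nabla u)^{*}|^{2}$ by $\int(|\nabla_{tan}u|^{2}+|u\,m|^{2})$ --- the \emph{regularity} direction of Theorem~\ref{Rellich-lemma-5.5}. But on $\partial Z_{tr}$ neither $\nabla_{tan}u$ nor $u\,m$ vanishes on the graph portion $S_{tr}$, so the integration in $t$ does not eliminate that piece: you are left with a surface integral over $S_{2r}$ that the family-of-surfaces trick does not convert to a volume integral, and Lemma~\ref{Fefferman-Phong} does not rescue this. The paper uses instead the \emph{Neumann} direction (Theorem~\ref{main-thm-Lp-n} for $p=2$),
\[
\int_{\partial Z_{tr}}|(\nabla u)^{*}|^{2}\,d\sigma\ \le\ C\int_{\partial Z_{tr}}\Big|\frac{\partial u}{\partial\nu}\Big|^{2}\,d\sigma;
\]
since $\partial u/\partial\nu=0$ on $S_{tr}$, the right-hand side lives only on the lateral and top faces of $Z_{tr}$, and integration in $t$ turns it directly into $\tfrac{C}{r}\int_{Z_{2r}}|\nabla u|^{2}\,dx$, with no potential term to absorb. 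With that one correction your argument is the paper's.
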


\begin{proof}
  It follows from Lemma \ref{Holer-estimate} that
  for all $x \in Z_{r}$,
\begin{equation}
  \label{Holder}
  |\nabla u(x)| \leqslant C\left(x_{d}-\Psi\left(x^{\prime}\right)\right)^{\alpha-1} \frac{1}{r^{\alpha-1}}\left\{\frac{1}{r^{d}} \int_{Z_{2r}}|\nabla u|^{2} d y\right\}^{\frac{1}{2}}
\end{equation}
where $\alpha\in(0,1)$.
Next, note that for $q>2$,
\begin{equation}\label{14}
  \begin{aligned}
&\int_{Z_{r}}|\nabla u|^{q} d x =\int_{Z_{r}}|\nabla u|^{2}|\nabla u|^{q-2} d x \\
\leqslant & \frac{C}{r^{(q-2)(\alpha-1)}} \int_{Z_{r}}|\nabla u|^{2}\left(x_{d}-\Psi\left(x^{\prime}\right)\right)^{(q-2)(\alpha-1)} d x \left\{\frac{1}{r^{d}} \int_{Z_{2r}}|\nabla u|^{2} d x\right\}^{\frac{q}{2}-1},
\end{aligned}
\end{equation}
is valid where \eqref{Holder} was employed. Let
$$
(\nabla u)_{r}^{*}\left(x^{\prime}, \Psi\left(x^{\prime}\right)\right)=\sup \left\{\left|\nabla u\left(x^{\prime}, x_{d}\right)\right|: \Psi\left(x^{\prime}\right)<x_{d}<C_1 r\right\} .
$$
Then, if $(q-2)(\alpha-1)>-1$, employing polar coordinates gives
\begin{equation}\label{15}
\begin{aligned}
 \frac{C}{r^{(q-2)(\alpha-1)}} \int_{S_{r}}\left|(\nabla u)_{r}^{*}\right|^{2}\, d \sigma \int_{0}^{Cr} t^{(q-2)(\alpha-1)}\, d t \leqslant C r \int_{S_{r}}\left|(\nabla u)_{r}^{*}\right|^{2}\, d \sigma.
\end{aligned}
\end{equation}
Thus by $L^2$ estimate, Lemma \ref{main-thm-Lp-n}, for $t\in[1,2]$,
\begin{align*}
\int_{S_{r}}\left|(\nabla u)_{r}^{*}\right|^{2} d \sigma &\leqslant \int_{\partial Z_{t r}}\left|(\nabla u)^{*}\right|^{2} d \sigma \leqslant  C \int_{\partial Z_{t r}}\left|\frac{\partial u}{\partial \nu}\right|^{2} d x.
\end{align*}
Integrating with respect to $t\in[1,2]$, we obtain
\begin{equation}
  \label{17}
  \int_{S_{r}}\left|(\nabla u)_{r}^{*}\right|^{2} d \sigma \leqslant \frac{C}{r} \int_{Z_{2r}}|\nabla u|^{2} d x .
\end{equation}
This, combining with \eqref{14} and \eqref{15}, gives
$$\int_{Z_{r}}|\nabla u|^{q} d x\leqslant C r \int_{S_{r}}\left|(\nabla u)_{r}^{*}\right|^{2}\, d \sigma\left\{\frac{1}{r^{d}} \int_{Z_{2r}}|\nabla u|^{2}\, d y\right\}^{\frac{q}{2}-1}\leqslant C\left(\frac{1}{r^d}\right)^{\frac{q}{2}-1} \left\{\int_{Z_{2r}}|\nabla u|^{2} d x\right\}^{\frac{q}{2}}$$
that is,
$$
\left(\frac{1}{r^{d}} \int_{Z_{r}}|\nabla u|^{q} d x\right)^{\frac{1}{q}} \leqslant C\left(\frac{1}{r^{d}} \int_{Z_{2r}}|\nabla u|^{2} d x\right)^{\frac{1}{2}}
$$
for $2<q<3+\frac{\alpha}{1-\alpha}$.
\end{proof}

\begin{proof}[Proof of Theorem \ref{thm-W1p-1}]
  By Theorem \ref{thm1}, it suffices for us to show for any ball $B(x_0,r)$ with the property that either $x_0\in \partial\Omega$ or $B(x_0,2r)\subset\Omega$ for some $r>0$, the weak solutions of \eqref{Equation v2} satisfies the weak reverse H\"older inequality \eqref{rh inequality}. If  $B(x_0, 2 r) \subset \Omega$, the desired result follows from the interior estimate. If $x_0\in \partial\Omega$, Lemma \ref{rh-2<p<3+e} gives \eqref{rh inequality}. The proof is completed.
\end{proof}


\subsection{\texorpdfstring{$W^{1,p}$}{} estimate for \texorpdfstring{$\frac{3}{2}-\e<p<2$}{}}

In the case $\frac32-\e<p<2$, we use a duality argument.
\begin{thm}\label{thm-W1p-2}
  Suppose $A$ satisfies \eqref{ellipticity}-\eqref{Dini} and $V(x)>0$ a.e. satisfies \eqref{V}. Let $f\in L^{p}(\Omega)$. Then there exists $\e>0$ so that if $\frac32-\e<p<2$, the weak solutions to \eqref{W1p} with $G=0$ satisfies the estimate
$$\|\nabla u\|_{L^p(\Omega)}\leq C\|f\|_{L^p(\Omega)},$$
where $C$ depends only on $\mu$, $d$, $p$ and the Lipschitz character of $\Omega$.
\end{thm}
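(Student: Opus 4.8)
The plan is to deduce the bound $\|\nabla u\|_{L^p(\Omega)}\le C\|f\|_{L^p(\Omega)}$ from the already-established range $2<p<3+\e$ (Theorem \ref{thm-W1p-1}) by a duality argument, using that the operator $\mathcal{L}+V$ is formally self-adjoint by \eqref{symmetric}. The arithmetic that makes this work is that $(3/2)'=3$: after shrinking $\e$ if necessary, the conjugate exponent $p'$ of any $p\in(\tfrac32-\e,2)$ lies in $(2,3+\e)$, so Theorem \ref{thm-W1p-1} applies to the dual problem. This is exactly the mechanism by which the right endpoint $3$ in Theorem \ref{thm-W1p-1} forces the left endpoint $\tfrac32$ here.

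As in the proof of Theorem \ref{thm1}, I would first truncate the data to $f_R$ and work on the bounded Lipschitz domains $\Omega_R$, so that $\nabla u\in L^2(\Omega_R)\subset L^p(\Omega_R)$ and the $L^p$ norm may be recovered by testing against $L^{p'}(\Omega_R)$; the estimate on $\Omega$ then follows on letting $R\to\infty$. Fix $R$ and let $u\in W^{1,2}(\Omega_R)$ solve $\mathcal{L}u+Vu=\operatorname{div}f_R$ in $\Omega_R$ with $\frac{\partial u}{\partial\nu}=-f_R\cdot n$ on $\partial\Omega_R$. Given $g\in L^{p'}(\Omega_R;\mathbb{R}^d)$, the variational method produces $v\in W^{1,2}(\Omega_R)$ solving $\mathcal{L}v+Vv=\operatorname{div}g$ in $\Omega_R$, $\frac{\partial v}{\partial\nu}=-g\cdot n$ on $\partial\Omega_R$; since $A=A^{*}$ this is again a problem of the form \eqref{W1p}, so Theorem \ref{thm-W1p-1} (applied with exponent $p'\in(2,3+\e)$) yields $\|\nabla v\|_{L^{p'}(\Omega_R)}\le C\|g\|_{L^{p'}(\Omega_R)}$ with $C$ independent of $R$.

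The crux of the computation is to pair the two weak formulations: using $v$ as a test function in the equation for $u$ and $u$ as a test function in the equation for $v$ gives
$$\int_{\Omega_R}\langle A\nabla u,\nabla v\rangle\,dx+\int_{\Omega_R}Vuv\,dx=-\int_{\Omega_R}f_R\cdot\nabla v\,dx$$
and
$$\int_{\Omega_R}\langle A\nabla v,\nabla u\rangle\,dx+\int_{\Omega_R}Vvu\,dx=-\int_{\Omega_R}g\cdot\nabla u\,dx,$$
and the left-hand sides coincide by the symmetry of $A$, whence $\int_{\Omega_R}g\cdot\nabla u\,dx=\int_{\Omega_R}f_R\cdot\nabla v\,dx$. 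Hölder's inequality together with the bound on $\nabla v$ then gives $\bigl|\int_{\Omega_R}g\cdot\nabla u\,dx\bigr|\le\|f_R\|_{L^p(\Omega_R)}\|\nabla v\|_{L^{p'}(\Omega_R)}\le C\|f\|_{L^p(\Omega)}\|g\|_{L^{p'}(\Omega_R)}$; taking the supremum over $\|g\|_{L^{p'}(\Omega_R)}\le1$ yields $\|\nabla u\|_{L^p(\Omega_R)}\le C\|f\|_{L^p(\Omega)}$ uniformly in $R$, and $R\to\infty$ with Fatou's lemma finishes the proof.

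I would flag two points. The genuinely essential one is the exponent bookkeeping above: the argument hinges on the conjugate of $(\tfrac32-\e,2)$ sitting strictly inside the interval $(2,3+\e)$ provided by Theorem \ref{thm-W1p-1}, so the $\e$ appearing in the present theorem is determined by (and generally smaller than) the one there. The second, routine, point is the legitimacy of the pairing of weak formulations: one needs $v\in W^{1,2}(\Omega_R)$ with $V^{1/2}v\in L^2(\Omega_R)$ so that $\int_{\Omega_R}Vuv\,dx$ is finite — which follows from the $L^2$ Neumann theory of Section \ref{section-4} together with the Fefferman–Phong inequality (Lemma \ref{Fefferman-Phong}) — and a density argument on the bounded Lipschitz domain $\Omega_R$ to justify using $u$ and $v$ as test functions. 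It also suffices to prove the estimate for the $W^{1,2}$ solution supplied by the $L^2$ theory, the general $W^{1,p}$ statement following by the same truncation and approximation.
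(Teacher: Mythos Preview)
Your proposal is correct and follows essentially the same duality argument as the paper: solve the adjoint problem $\mathcal{L}v+Vv=\operatorname{div}g$ with the corresponding Neumann condition, use the symmetry of $A$ to equate the paired weak formulations, and invoke Theorem \ref{thm-W1p-1} at the conjugate exponent $p'\in(2,3+\e)$. The paper's version is more terse (it takes $h\in C_c^\infty(\Omega)$ so that $h\cdot n=0$ on $\partial\Omega$ automatically, and works directly on $\Omega$), while your localization to $\Omega_R$ and discussion of the legitimacy of the pairing are reasonable elaborations of the same idea.
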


\begin{proof}
  Let $h\in C_c^\infty(\Omega)$ and $v$ be a weak solution of $\mathcal{L}v+Vv={\rm div} h$ in $\Omega$ and $\frac{\partial v}{\partial \nu}=0$ on $\partial\Omega$. Let $q=\frac{p}{p-1}$. The weak formulations of variational solution of $u$ and $u$ imply that
$$\left|\int_\Omega h\cdot \nabla u\,dx\right|=\left|\int_\Omega f\cdot \nabla v\,dx\right|.$$
It follows from H\"older inequality and the result for $q\in(2,3+\e)$, Theorem \ref{thm-W1p-1} that
$$\|\nabla u\|_{L^p(\Omega)}=\sup_{\|h\|_{L^q(\Omega)}\leq1}\left|\int_\Omega h\cdot \nabla u\,dx\right|\leq C\|f\|_{L^p(\Omega)}.$$
\end{proof}

To show Theorem \ref{main-thm-W1p} we need the following estimate for Neumann function.
 \begin{prop}\label{Neumann function estimate}
For any $x,y \in \Omega$,
\begin{equation}
  |N (x,y)|  \leq \frac{C_{k}}{(1+|x-y| m(y, V))^{k}} \cdot \frac{1}{|x-y|^{d-2}},
\end{equation}
where $k \geq 1$ is an arbitrary integer.
\end{prop}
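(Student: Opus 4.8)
The plan is to obtain Proposition \ref{Neumann function estimate} from Proposition \ref{Nr-estimate} by letting $R\to\infty$, the essential point being that the constant $C_k$ there does not depend on $R$. Recall that the Neumann function of $\Omega$ is realized as $N(x,y)=\lim_{R\to\infty}N_R(x,y)$, with $N_R$ the Neumann function of the truncated domain $\Omega_R$; once this convergence is in place, the stated bound is inherited at no cost.

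First I would fix $y\in\Omega$ and show that a subsequence of $\{N_R(\cdot,y)\}$ converges locally uniformly on $\overline{\Omega}\setminus\{y\}$. On any ball $B(x_0,2\rho)$ with $y\notin B(x_0,4\rho)$, the function $N_R(\cdot,y)$ is a weak solution of $\mathcal{L}u+Vu=0$ in $D(x_0,2\rho)$, with $\partial u/\partial\nu=0$ on $\Delta(x_0,2\rho)$ in the boundary case; by Proposition \ref{Nr-estimate} its supremum over $D(x_0,2\rho)$ is bounded uniformly in $R$, and the De Giorgi--Nash estimate of Theorem \ref{Holer-estimate}, together with its boundary Neumann analogue (obtained by the same perturbation scheme, with Lemma \ref{pointwise estimate} replacing the interior $L^\infty$ bound), yields a uniform H\"older bound on $D(x_0,\rho)$. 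Arzel\`a--Ascoli and a diagonal argument then produce a limit $N(\cdot,y)$; one checks in the standard way that it solves $\mathcal{L}N(\cdot,y)+VN(\cdot,y)=\delta_y$ in $\Omega$ with vanishing conormal derivative on $\partial\Omega$, and that $N(x,y)=O(|x-y|^{2-d})$ at infinity, again from the uniform bound. Uniqueness of such a function --- a consequence of $V>0$ via an energy identity and the Fefferman--Phong inequality (Lemma \ref{Fefferman-Phong}) --- forces the whole sequence to converge, so the limit is the Neumann function of $\Omega$. Finally, for each fixed $x\neq y$ and each integer $k\ge1$, Proposition \ref{Nr-estimate} gives $|N_R(x,y)|\le C_k(1+|x-y|\,m(y,V))^{-k}|x-y|^{2-d}$ for all large $R$, and letting $R\to\infty$ gives the same bound for $N(x,y)$ with the same $C_k$.

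The step I expect to be the main obstacle is the convergence and identification of the limit rather than the estimate itself: one needs a Neumann-type boundary H\"older estimate for $\mathcal{L}+V$ on the Lipschitz domain (for equicontinuity up to $\partial\Omega$) and a uniqueness statement for the homogeneous Neumann problem on the unbounded $\Omega$ among solutions decaying at infinity, which is precisely where $V>0$ and Lemma \ref{Fefferman-Phong} enter to exclude nontrivial solutions. An alternative that avoids this discussion is to argue directly on $\Omega$, copying the proof of Proposition \ref{Nr-estimate}: write $N(x,y)=\Gamma(x,y)-v^x(y)$ with $v^x$ solving the homogeneous equation with $\partial v^x/\partial\nu=\partial\Gamma(x,\cdot)/\partial\nu$ on $\partial\Omega$; estimate $\Gamma(x,y)$ by Proposition \ref{estimate of fundamental solution-1} and the symmetry $\Gamma(x,y)=\Gamma(y,x)$ (so that the decay is phrased with $m(y,V)$); and control $v^x(y)$ pointwise by combining the $L^2$ solvability of the Neumann problem from Section \ref{section-4}, the boundary $L^\infty$ estimate of Lemma \ref{pointwise estimate}, and Lemma \ref{lemma-function m} to pass from $m(x_0,V)$-decay to $m(y,V)$-decay. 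On that route the delicate point is the $L^2$ average bound for $N(\cdot,y)$ on a ball at distance $\sim|x-y|$ from the pole, obtained with a cutoff away from $y$ and a Caccioppoli estimate before invoking Lemma \ref{pointwise estimate}.
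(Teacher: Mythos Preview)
Your proposal is correct, but the paper takes the more direct of the two routes you outline. Its proof is a single sentence: ``The same argument as in the proof of Proposition \ref{Nr-estimate} completes the proof,'' i.e.\ it reruns the duality argument (Lemma \ref{pointwise estimate} $+$ Lemma \ref{Fefferman-Phong} $+$ Lemma \ref{lemma-function m}) directly on the unbounded domain $\Omega$, rather than passing to a limit from $\Omega_R$. This is exactly your alternative paragraph.

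Your primary limiting argument is also valid, and you are right that the real work there is not the estimate but the compactness and identification: one needs a boundary Neumann H\"older bound (not stated explicitly in the paper, though obtainable by the same perturbation scheme as Theorem \ref{Holer-estimate}) and a uniqueness statement for decaying solutions of the homogeneous Neumann problem on $\Omega$. The paper's route sidesteps all of this by simply observing that nothing in the proof of Proposition \ref{Nr-estimate} used finiteness of $R$ --- the ingredients (boundary $L^\infty$ decay, Fefferman--Phong, properties of $m(\cdot,V)$) are all local or global on $\Omega$ already. Your limiting approach buys nothing extra here, though it would be the natural choice if one needed to \emph{construct} $N$ rather than merely bound it.
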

\begin{proof}
  The same argument as in the proof of Proposition \ref{Nr-estimate} completes the proof.
\end{proof}

Finally we are in a position to give the proof of Theorem \ref{main-thm-W1p}.
\begin{proof}[Proof of Theorem \ref{main-thm-W1p}]
Decompose $u=u_1+u_2$ where $u_1,u_2$ are weak solutions of
\begin{equation*}
  \begin{aligned}
  \begin{cases}
\mathcal{L} u_1+Vu_1 ={\rm div} f  &\text{in } \Omega,\\[0.1cm]
\hspace{3.2em}\frac{\partial u_1}{\partial \nu}  =-f\cdot n&\text{on } \partial\Omega,
  \end{cases}
\end{aligned}
\quad\text{ and }\quad
\begin{aligned}
\begin{cases}
\mathcal{L} u_2+Vu_2 =0\quad  &\text{in } \Omega,\\[0.1cm]
\hspace{3.2em}\frac{\partial u_2}{\partial \nu}  =G\ &\text{on } \partial\Omega.
  \end{cases}
\end{aligned}
\end{equation*}
By using integration by parts and Cauchy's inequality, we have
$$\|\nabla u_1\|_{L^2(\Omega)}\leq C\|f\|_{L^2(\Omega)}.$$
This, combining with Theorem \ref{thm-W1p-1} and Theorem \ref{thm-W1p-2}, gives that there exists $\e>0$ such that for $\frac32-\e<p<3+\e$,
\begin{equation}\label{6.1}
  \|\nabla u_1\|_{L^p(\Omega)}\leq C\|f\|_{L^p(\Omega)}.
\end{equation}

Let $h\in C^\infty_0(\Omega)$ and $v$ be the weak solution to
$$\mathcal{L} (v-c)+V(v-c) ={\rm div}\;h\quad  \text{in } \Omega,\quad\text{ and }\quad\frac{\partial v}{\partial \nu}  =0 \quad\text{on } \partial\Omega,$$
where $c=\fint_{\Omega}v\,dx$. Then the weak formulation, the Sobolev embedding and Poincar\'e inequality imply that
\begin{align*}
  \left|\int_\Omega h\cdot\nabla u_2\, dx\right|=\left|\int_{\partial\Omega} G(v-c)\, dx\right| &\leq\|G\|_{B^{-\frac 1p,p}(\partial\Omega)} \|v-c\|_{B^{\frac 1p,q}(\partial\Omega)}\\
  &\leq\|G\|_{B^{-\frac 1p,p}(\partial\Omega)} \|v-c\|_{W^{1,q}(\Omega)}\\
  &\leq\|G\|_{B^{-\frac 1p,p}(\partial\Omega)} \|\nabla v\|_{L^{q}(\Omega)}
\end{align*}
where $p, q$ are conjugate. It follows from \eqref{6.1} that for $\frac32-\e<q<3+\e$, $$\|\nabla v\|_{L^{q}(\Omega)}=\|\nabla (v-c)\|_{L^{q}(\Omega)}\leq C\|h\|_{L^{q}(\Omega)}.$$
This gives
\begin{equation}\label{6.2}
  \|\nabla u_2\|_{L^p(\Omega)}=\sup_{\|h\|_{L^{q}(\Omega)}\leq 1}\left|\int_\Omega h\cdot\nabla u_2\, dx\right|\leq C\|G\|_{B^{-\frac 1p,p}(\partial\Omega)}.
\end{equation}
In view of \eqref{6.1} and \eqref{6.2}, we obtain that for $\frac32-\e<p<3+\e$,
\begin{equation}\label{6.10}
  \|\nabla u\|_{L^p(\Omega)}\leq C\left\{\|f\|_{L^p(\Omega)}+\|G\|_{B^{-\frac 1p,p}(\partial\Omega)}\right\}.
\end{equation}

It suffices for us to show
\begin{equation}\label{Vu-estimate}
  \|V^\frac{1}{2}u\|_{L^p(\Omega)}\leq C\left\{\|f\|_{L^p(\Omega)}+\|G\|_{B^{-\frac 1p,p}(\partial\Omega)}\right\}.
\end{equation}
It follows from the Poisson representation formula and integration by parts that
\begin{align*}
  u_1(x)&= \int_{\partial\Omega}  N(x, y)\frac{\partial u_1}{\partial \nu}d \sigma(y)+ \int_{\Omega}  N(x, y)(\mathcal{L}+V)u_1\,d y=
  -\int_{\Omega} \nabla_y N(x, y)f(y)d y.
\end{align*}
By H\"older's inequality we have
\begin{equation}\label{4.6}
\begin{aligned}
|u_1(x)| &\leq
\int_{\Omega} |\nabla_y N(x, y)| |f(y)| d y\\
& \leq
 \left\{\int_{\Omega}|\nabla_y N(x, y)| d y\right\}^{1 /q}\left\{\int_{\Omega}|\nabla_y N(x, y)||f(y)|^{p} d y\right\}^{1 / p}
\end{aligned}
\end{equation}
where $q=\frac{p}{p-1}$. Fix $x\in \partial\Omega$. Let $r_0=\frac{1}{m(x,V)}$ and $E_j=\{y\in\Omega: |x-y|\sim2^jr_0\}$. It follows from H\"older's inequality and Caccippoli's inequality that
\begin{equation}\label{N-estimate-1}
  \begin{aligned}
    \int_{E_j}|\nabla_y N(x, y)|\, d y & \leq C(2^jr_0)^{\frac d2}\left(\int_{E_j}|\nabla_y N(x, y)|^2\, d y\right)^{1/2}  \\
    & \leq C(2^jr_0)^{\frac d2-1}\left(\int_{E_j}|N(x, y)|^2\, d y\right)^{1/2}\\
    & \leq C(2^jr_0)^{\frac d2-1}\cdot\frac{(2^jr_0)^{\frac d2}}{(1+2^j)^{k}(2^jr_0)^{d-2}}\\
    &=\frac{C 2^jr_0}{(1+2^j)^{k}}
  \end{aligned}
\end{equation}
where Proposition \ref{Neumann function estimate} was also used. Taking $k=2$ in \eqref{N-estimate-1}, we have
\begin{equation*}\label{6.4}
\begin{aligned}
\int_{\Omega}|\nabla_y N(x, y)|\, d y & \leq  \frac{C}{m(x,V)}\sum_{j=-\infty}^{\infty}\frac{2^j}{(1+2^j)^2}\leq \frac{C}{m(x, V)}.
\end{aligned}
\end{equation*}
Plugging this into \eqref{4.6}, we obtain
$$
\begin{aligned}
|u_1(x)| & \leq \frac{C}{m(x, V)^{1 / q}}
\left(\int_{\Omega}|\nabla_y N(x, y) \| f(y)|^{p} d y\right)^{1 / p}
\end{aligned}
$$
This combining with Proposition \ref{upper-bound-V} gives that
\begin{align*}
  &\int_{\Omega}|V^{\frac12}(x)u_1(x)|^{p}d x\leq C \int_{\Omega}\left|m(x, V)u_1\right|^{p} d x
  \leq C \int_{\Omega}|f(y)|^{p}\left\{\int_{\Omega} m(x, V)|\nabla_y N(x, y)| d x\right\} d y.
\end{align*}
For fixed $y\in\partial\Omega$, Let $r_1=\frac{1}{m(y,V)}$ and $F_j=\{x\in\Omega: |x-y|\sim2^jr_1\}$. Together Lemma \ref{lemma-function m} with \eqref{N-estimate-1} yields that
$$
\begin{aligned}
\int_{F_j}|\nabla_y N(x, y)|m(x, V)\, d x & \leq  \frac{C 2^j r_1}{(1+2^j)^{k}}\cdot(1+2^j)^{k_0}r_1^{-1}\leq \frac{C 2^j}{(1+2^j)^{2}}
\end{aligned}
$$
where $k$ is chosen to be $k_0+2$ in the second inequality. Thus we have
\begin{equation}\label{6.8}
\int_{\Omega} m(x, V)|\nabla_y N(x, y)| d x\leq C\sum_{j=-\infty}^{\infty}\frac{2^j}{(1+2^j)^{2}}\leq C
\end{equation}
which implies for $1<p<\infty$, $\|V^{\frac12}u_1\|_{L^p(\Omega)}\leq C\|f\|_{L^p(\Omega)}$ holds.

It remains to estimate $\|V^{\frac12}u_2\|_{L^p(\Omega)}$. Let $F\in C_0^\infty(\Omega)$ and $v$ solves
\begin{align*}
  \begin{cases}
\mathcal{L} v+Vv =F  &\text{in } \Omega,\\[0.1cm]
\hspace{2.8em}\frac{\partial v}{\partial \nu}  =0&\text{on } \partial\Omega.
  \end{cases}
\end{align*}
Then
\begin{align*}
  \left|\int_\Omega u_2 F \, dx\right|=\left|\int_{\partial\Omega} Gv\, d\sigma\right|\leq \|G\|_{B^{-\frac 1p,p}(\partial\Omega)} \|\nabla v\|_{L^{q}(\Omega)}.
\end{align*}
By a duality argument, it suffices to show that
\begin{equation}\label{6.7}
  \int_{\Omega}|\nabla v|^q\, dx\leq C\int_{\Omega}\frac{|F(x)|^q}{m(x,V)^q}\, dx .
\end{equation}
To show \eqref{6.7}, note that
\begin{equation}\label{6.11}
  \begin{aligned}
    |\nabla v(x)|&=\left|\int_{\Omega} \nabla_x N(x, y)F(y)\,d\sigma(y)\right|\\
    &\leq C\left(\int_{\Omega} |\nabla_x N(x, y)|m(y,V)^p\,dy\right)^{\frac1p} \left(\int_{\Omega} |\nabla_x N(x, y)|\cdot\frac{|F(y)|^q}{m(y,V)^q}\,dy\right)^{\frac1q}.
  \end{aligned}
\end{equation}
A similar computation as \eqref{6.8} shows
\begin{equation}\label{6.9}
\int_{\Omega} |\nabla_x N(x, y)|m(y,V)^p\,dy\leq Cm(x,V)^{p-1}.
\end{equation}
Plugging \eqref{6.8} and \eqref{6.9} into \eqref{6.11} gives that
  \begin{align*}
    \int_{\Omega}|\nabla v|^q\, dx&\leq C\int_{\Omega} \frac{|F(y)|^q}{m(y,V)^q} \int_{\Omega} m(x,V)|\nabla_x N(x, y)|\,dx dy\leq C\int_{\Omega} \frac{|F(y)|^q}{m(y,V)^q}dy.
  \end{align*}
In view of \eqref{6.10} and \eqref{Vu-estimate}, the proof is complete.
\end{proof}


\section{\bf Appendix: \texorpdfstring{$L^p$}{} Neumann problem}

In this section, we are going to establish $L^p$ estimate for Neumann problem. We remark here that our result in this section extends the result in \cite{Shen-1994-IUMJ} to $1<p<2+\e$, even in the case of $-\Delta u+V u=0$ in $\Omega$ and $\frac{\partial u}{\partial n}=g$ on $\partial\Omega$.

  \begin{thm}\label{main-thm-Lp-n}($L^p$ Neumann estimates)
Let $A$ satisfy \eqref{ellipticity}-\eqref{Dini} and $V>0$ a.e. satisfy \eqref{V}. There exists $\e>0$ such that for given $g\in L^{p}(\partial\Omega)$, $1< p<2+\e$, the weak solutions to
\begin{equation}\label{NP}
\mathcal{L} u+V u =0 \quad  \text{in } \Omega,\quad\frac{\partial u}{\partial \nu}  =g \quad \text{on } \partial\Omega\quad\text{and}\quad(\nabla u)^*\in L^p(\partial\Omega)
\end{equation}
satisfies the estimate
\begin{equation}\label{np-estimate}
\|(\nabla u)^*\|_{L^p(\partial\Omega)}\leq C\|g\|_{L^p(\partial\Omega)},
\end{equation}
where $C$ depends only on $\mu$, $d$, $p$ and the Lipschitz character of $\Omega$.
\end{thm}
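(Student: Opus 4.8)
The plan is to prove Theorem \ref{main-thm-Lp-n} by the same layer potential machinery and real-variable scheme used for the regularity problem. First I would recall that the single layer potential $\mathcal{S}(f)(x)=\int_{\partial\Omega}\Gamma(x,y)f(y)\,d\sigma(y)$ produces a solution of $\mathcal{L}u+Vu=0$ in $\R^d\setminus\partial\Omega$, and that by the jump relations the conormal derivatives from inside and outside satisfy $(\partial/\partial\nu)_\pm \mathcal{S}(f)=(\mp\tfrac12 I+\mathcal{K}^*)f$, where $\mathcal{K}^*$ is the (adjoint) boundary double layer operator associated with $\mathcal{L}+V$. The Rellich estimates of Theorem \ref{Rellich-lemma-5.5} give $\|(\nabla u)_\pm\|_{L^2(\partial\Omega)}^2\sim \|(\partial u/\partial\nu)_\pm\|_{L^2(\partial\Omega)}^2$ together with $\|(\nabla_{tan}u)_\pm\|^2+\|u_\pm m(\cdot,V)\|^2$; combined with the decay estimates for $\Gamma$ in Proposition \ref{estimate of fundamental solution-1} and the fact that $\mathcal{S}$ maps $L^2$ boundedly into $\widetilde W^{1,2}(\partial\Omega)$, the usual Rellich--Verchota argument shows $\pm\tfrac12 I+\mathcal{K}^*: L^2(\partial\Omega)\to L^2(\partial\Omega)$ is invertible, hence the $L^2$ Neumann problem is uniquely solvable with $\|(\nabla u)^*\|_{L^2(\partial\Omega)}\le C\|g\|_{L^2(\partial\Omega)}$.

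Given $L^2$ solvability, I would obtain the range $2<p<2+\e$ exactly as in the proof of Theorem \ref{main-thm-Lp-r} for $2<p<2+\e$: fix a surface ball $\Delta(x_0,r)$, choose a cutoff $\varphi$ supported in $B(x_0,8r)$, and split $u=v+w$ where $v$ solves the Neumann problem with data $g\varphi$ (so that, by $L^2$ solvability, $\fint_{\Delta'(x_0,r)}|(\nabla v)^*|^2\le C\fint_{\Delta'(x_0,8r)}|g|^2$) and $w$ solves $\mathcal{L}w+Vw=0$ in $D(x_0,4r)$ with $\partial w/\partial\nu=0$ on $\Delta(x_0,4r)$. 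For $w$ one has the weak reverse H\"older inequality
\begin{equation*}
\Big\{\fint_{\Delta'(x_0,r)}|(\nabla w)^*|^q\,d\sigma\Big\}^{1/q}\le C\Big\{\fint_{\Delta'(x_0,4r)}|(\nabla w)^*|^2\,d\sigma\Big\}^{1/2}
\end{equation*}
for some $q>2$; indeed this follows from the $L^2$ Neumann estimate, Caccioppoli, Sobolev--Poincar\'e and the self-improving property precisely as in the derivation of \eqref{rh2}, or equivalently from Lemma \ref{rh-2<p<3+e}. Then Theorem \ref{real variable method}, applied with $F=(\nabla u)^*$, $F_B=(\nabla v)^*$, $R_B=(\nabla w)^*$ and $f=g$, yields \eqref{np-estimate} for $2<p<q$, hence for some range $2<p<2+\e$. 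Uniqueness in this range follows from the $L^2$ uniqueness together with the a priori bound.

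For $1<p<2$ I would argue by the atomic/Hardy-space method, dual to the regularity case. One shows that if $a$ is an $L^2$ "Neumann atom" supported in $\Delta(y_0,r)$ with $\int a\,d\sigma=0$ and $\|a\|_{L^2(\partial\Omega)}\le|\Delta(y_0,r)|^{-1/2}$, then the solution $u$ of the Neumann problem with data $a$ satisfies $\int_{\partial\Omega}(\nabla u)^*\,d\sigma\le C$. The local part $\int_{\Delta(y_0,10r)}(\nabla u)^*$ is handled by H\"older's inequality and the $L^2$ bound; the far-field part uses the representation $u(x)=\int_{\partial\Omega}N(x,y)a(y)\,d\sigma(y)$, the decay estimate for the Neumann function in Proposition \ref{Nr-estimate}/\ref{Neumann function estimate}, the cancellation $\int a=0$, and the dyadic estimate $\int_{\partial(\Omega\setminus B(y_0,R))}|(\nabla u)^*|^2\,d\sigma\le Cr^{2\beta}/R^{d-1+2\beta}$ obtained from $L^2$ solvability plus Caccioppoli exactly as in \eqref{6.15}--\eqref{6.6}. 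Summing the dyadic pieces gives the uniform $\mathcal H^1\to L^1$ bound, and then Theorem \ref{interpolation} interpolates between this and the $L^2$ estimate to give \eqref{np-estimate} for $1<p<2$; combining the two ranges with $p=2$ finishes the proof. The main obstacle I anticipate is the careful treatment of the normalization and the far-field decay of the Neumann function near the boundary at infinity for the unbounded Lipschitz-graph domain $\Omega$ — in particular verifying that the single layer potential and the Neumann function behave well enough at infinity for the jump relations and the dyadic summation to be justified; once those boundary-at-infinity issues are dispatched (which the decay estimates of Section \ref{section-3} are designed to handle), the rest is the now-standard real-variable and atomic machinery.
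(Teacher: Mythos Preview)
Your proposal is correct and follows essentially the same route as the paper: layer potentials plus the Rellich identities for $p=2$, the Dahlberg--Kenig atomic $\mathcal{H}^1$ argument and interpolation for $1<p<2$, and the real-variable method of Theorem \ref{real variable method} with the decomposition $F=(\nabla u)^*$, $F_B=(\nabla v)^*$, $R_B=(\nabla w)^*$ and the reverse H\"older inequality analogous to \eqref{rh2} for $2<p<2+\e$. The only cosmetic difference is that the paper cuts off the datum by a characteristic function of $6B$ rather than a smooth $\varphi$, which is immaterial.
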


\begin{proof}
The proof of the case $p=2$ follows by the well-known layer potential method. Let $f\in L^2(\partial\Omega)$, then $u(x)=\mathcal{S} (f) (x)$
is a solution to the $L^2$ Neumann problem in $\Omega$ with boundary data $(\frac{1}{2}I+K_A)f$. As a result, one may establish the existence of solutions of the $L^2$ Neumann problem in $\Omega$ by showing that the operators $\frac{1}{2}I+K_A: L^2(\partial\Omega)\to L^2(\partial\Omega)$ is invertible.

It suffices for us to show that \eqref{np-estimate} holds for $1< p<2$ and $2< p<2+\e$. We follow the approach in \cite{Dahlberg-Kenig-1987} to treat the case $1< p<2$. For solutions of the $L^p$ Neumann problem with $\mathcal{H}^1_{at}$ data, we have $\int_{\partial\Omega}(\nabla u)^*\,d\sigma\le C$. (See \cite{D-book,Grafakos-book,Stein-book} for a detailed presentation of $\mathcal{H}^1_{at}$)
This, together with the $L^2$ estimates, yields the desired ranges $1<p<2$, by the interpolation result, Theorem \ref{interpolation}.

Our approach to the range $2<p<2+\e$, which is based on a real variable argument.

To handle this, let $f\in L^2(\partial\Omega)$ with compact support and $T(f)=(\nabla u)^*$. To obtain the $L^p$($2<p<2+\e$) boundedness of $T$, apply Theorem \ref{real variable method} with $F=T(f)$, $F_B=T(f\chi_{6B})$ and $R_B=T(f\chi_{\partial\Omega\backslash 6B})$. It can be verified that $F\leq F_B+R_B$ and by $L^2$ solvability of the Neumann problem,
$$\int_{\Delta(x_0,r)}|F_B|^2\,d\sigma\leq C\int_{\Delta(x_0,6r)}|g|^2\,d\sigma.$$
Following the same line as the proof of \eqref{rh2}, we obtain that for some $q>2$,
\begin{align*}
  \left(\fint_{\Delta(x_0,r)} |R_B|^q d\sigma\right)^{1/q}&\leq C\left(\fint_{\Delta(x_0,2r)} |R_B|^2 d\sigma\right)^{1/2}\\
  &\leq C\left(\fint_{\Delta(x_0,2r)} |(\nabla u)^*|^2 d\sigma\right)^{1/2}+C\left(\fint_{\Delta(x_0,6r)} |g|^2 d\sigma\right)^{1/2}.
\end{align*}
Thus, by Theorem \ref{real variable method}, we obtain $\|(\nabla u)^*\|_{L^p(\partial\Omega)}\leq C\|g\|_{L^p(\partial\Omega)}$ for any $2<p<q$.
The proof completes.
\end{proof}

{\noindent\bf Data availability} Data sharing is not applicable to this paper as no data sets were generated or analysed.
\section*{Declarations}
{\bf Conflict of interest} The authors declared that they have no conflict of interest.


\newpage
\bibliographystyle{amsplain}
\bibliography{Manuscript-Geng-Xu}

\end{document}